       \newcommand{\fr}{\frac}
       \newcommand{\beqn}{\begin{eqnarray}}
       \newcommand{\eqn}{\end{eqnarray}}
       \newcommand{\beqm}{\begin{eqnarray*}}
       \newcommand{\eqm}{\end{eqnarray*}}
       \newcommand{\bequ}{\begin{equation}}
       \newcommand{\equ}{\end{equation}}
\newcounter{totenum}
\let\Oldenumerate\enumerate
\def\enumerate{\refstepcounter{totenum}\Oldenumerate}
\newcommand{\myitem}[1]{\refstepcounter{enumi}\item[\hypertarget{#1}{\normalfont{(\arabic{enumi})}}]}
\numberwithin{equation}{section}
\newtheorem{thm}{Theorem}[section]
\newtheorem{lm}[thm]{Lemma}
\newtheorem{cor}[thm]{Corollary}
\newtheorem{defin}[thm]{Definition}
\newtheorem{prob}[thm]{Problem}
\newtheorem{prop}[thm]{Proposition}
\newtheorem*{prop*}{Proposition}
\begin{document}

\title[Localization  and compactness on  Fock Spaces]{Localization  and compactness of  Operators on  Fock Spaces}

\author[Z. Hu]{Zhangjian Hu$^1$}
\address{Z. Hu, Department of Mathematics, Huzhou University, Huzhou, Zhejiang,    313000, China}
\email{huzj@zjhu.edu.cn}

\author[X. Lv]{Xiaofen Lv$^1$}
\address{X. Lv, Department of Mathematics, Huzhou University, Huzhou, Zhejiang,    313000, China}
\email{lvxf@zjhu.edu.cn}

\thanks{1. This research is partially supported by the National Natural Science Foundation of China (11601149, 11771139, 11571105), Natural Science Foundation of Zhejiang province (LY15A010014).}

\author[B. D. Wick]{Brett D. Wick$^2$}
\address{B. D. Wick, Department of Mathematics, Washington University - St. Louis, One Brookings Drive, St. Louis, MO USA 63110.}
\email{wick@math.wustl.edu}

\thanks{2. Research supported in part by a National Science Foundation DMS grant \#0955432.}

 \date{\today}

\subjclass[2000]{Primary: 47B35.  Secondary: 32A37}
\keywords{Fock space, Weakly localized operator,   Compactness}

\begin{abstract}
 For $0<p\leq\infty$, let $F^{p}_\varphi$ be the   Fock space induced by a weight function $\varphi$ satisfying $ dd^c \varphi \simeq \omega_0$.  In this paper, given $p\in (0, 1]$ we introduce the concept of  weakly localized operators on $ F^{p}_\varphi$, we  characterize the compact operators in the algebra generated by  weakly localized operators. As an application, for $0<p<\infty$  we prove that an operator $T$ in the algebra generated by bounded Toeplitz operators with $\textrm{BMO}$ symbols is compact on $F^p_\varphi$ if and only if its Berezin transform satisfies certain vanishing property at $\infty$.  In the classical Fock space, we extend the Axler-Zheng condition on linear operators $T$, which ensures $T$ is compact on $F^p_{\alpha}$ for all possible $0<p<\infty$.
\end{abstract}

\maketitle

\section{Introduction}

 Let $H(\mathbb{C}^n)$ be the collection of all entire functions on $\mathbb{C}^n$, and let  $\omega_0= dd^c |z|^2$  be  the Euclidean
 K\"{a}hler form on $\mathbb{C}^n$, where $d^c=\fr {\sqrt{-1}}{4} (\overline{\partial} -\partial)$. Set $B(z,r)$ to be  the Euclidean ball in $\mathbb{C}^n$ with center $z$ and
 radius $r$, and $B(z,r)^c=\mathbb{C}^n\backslash B(z,r)$.  Throughout the paper, we assume that  $\varphi\in C^2(\mathbb{C}^n)$ is real-valued and  there are two positive numbers $M_1, M_2$ such that
\begin{equation}
\label{e:1.1}
         M_1 \omega_0 \le dd^c \varphi \le M_2 \omega_0
\end{equation}
 in the sense of currents. The expression \eqref{e:1.1} will be denoted as $ dd^c \varphi \simeq \omega_0$.
Given $0<p<\infty$ and a positive Borel measure $\mu$ on $\mathbb{C}^n$, let
  $L^p_\varphi(\mu)$ be the space defined by
$$
          L^p_\varphi(\mu)= \left\{ f \textrm{ is } \mu\textrm{-measurable on } \mathbb{C}^n:  f(\cdot)e^{-\varphi(\cdot) } \in L^p(\mathbb{C}^n,  d\mu)\right\}.
$$
When $d\mu =dV$, the Lebesgue measure on $\mathbb{C}^n$, we write   $L^p_\varphi$ for $ L^p_\varphi(\mu)$ and  set
$$
        \|f\|_{p, \varphi}= \left (\int_{\mathbb{C}^n} \left| f(z) e^{-\varphi(z)}\right|^p dV(z)\right)^{\fr 1p}.
$$
For $0<p<\infty$  the Fock space $F^p_\varphi$ is defined as $F^p_\varphi = L^p_\varphi \cap  H( \mathbb{C}^n)$,
and
 $$
 F^\infty_\varphi=\left\{f\in H(\mathbb{C}^n): \|f\|_{\infty, \varphi}= \sup\limits_{z\in\mathbb{C}^n}\left| f(z)\right| e^{-\varphi(z)}<\infty\right\}.
 $$
 $F^p_\varphi$ is a Banach space with norm $\|\cdot\|_{p, \varphi}$ when $1\le p\leq\infty$ and   $F^p_\varphi$ is
 a  Fr\'{e}chet space with  distance $\rho (f, g)=\|f-g\|_{p, \varphi}^p$ if $0<p<1$. The typical model of  $\varphi$ is $\varphi(z)= \fr \alpha 2 |z|^2$ with $\alpha>0$, which induces the classical Fock space. For this particular special weight $\varphi$,  $F^p_\varphi$ and $\|\cdot\|_{p,\varphi}$  will be
  written  as $F^p_\alpha$ and $\|\cdot\|_{p,\alpha}$, respectively.
 The  space $F^p_\alpha$ has been studied by many authors, see
  \cites{BI12, CIL11, HL11,WX16, WCZ12, XZ13, Zh12} and the references therein. Another special case is with $\varphi(z)= \fr \alpha 2 |z|^2- \fr m 2 \ln (A+|z|^2)$ with suitable $A>0$, and then $F^p_\varphi$ is the Fock-Sobolev space $F^{p, m}_\alpha$ studied in \cites{CIJ14, CZ12}.

It is  well-known that $F^2_\varphi$ is a Hilbert space with inner product
$$
\langle f, g\rangle_{F^2_\varphi}=\int_{\mathbb{C}^n}   f(z) \overline{g(z)}  e^{-2\varphi(z)}   dV(z).
$$
Given $z,w\in \mathbb{C}^n$, the reproducing kernel of  $F^2_\varphi$
 will be denoted by
 $K_z(w)=K(w, z)$. We write $k_z= \frac{K_z}{\|K_z\|_{2, \varphi}}$ to denote the normalized reproducing  kernel.  Given some bounded linear operator $T$ on $F^p_\varphi$,   the Berezin transform of $T$ is well defined as
$$
            \widetilde{T}(z)=  \langle T  k_z, k_z \rangle_{F^2_\varphi},
$$
since $T  k_z\in F^{p}_\varphi\subset F^{\infty}_\varphi$ and $k_z\in F^{1}_\varphi$. Set $P$ to be  the projection from $L^2_\varphi $ to $F^2_\varphi$, that is
 $$
 Pf(z)= \int _{\mathbb{C}^n} f(w) K(z, w) e^{-2\varphi(w)} dV(w)  \verb#  # \textrm{ for } f\in L^2_\varphi.
 $$
   For a complex Borel measure $\mu$ on $\mathbb{C}^n$ and $f\in F^p_\varphi$, we define the Toeplitz operator $T_\mu$ to be
  $$
             T_\mu f(z) = \int_{\mathbb{C}^n} f(w) K(z, w) e^{-2\varphi(w)} d\mu(w).
   $$
If $d\mu= g dV$, for short, we will use  $T_g$ to stand for the induced Toeplitz operator and will use that  $\widetilde{g}=\widetilde{T_g}$.
\newline


In the case of Fock spaces $F^2_\alpha$,  fixed $g$ bounded on $\mathbb{C}^n$, $|\langle T_g k_z, k_w \rangle|$  as a function of $(z, w)$ decays very fast off the diagonal of $\mathbb{C}^n\times \mathbb{C}^n$, see \cite{XZ13}*{Proposition 4.1}. From this point of view,
 Xia and Zheng in \cite{XZ13}  introduced the notion  of ``sufficiently localized'' operators on $F^2_\alpha$ which include  the  algebra  generated by   Toeplitz operators with bounded symbols, and they proved that, if $T$ is in the $C^*$-algebra generated by the class of sufficiently  localized  operators, $T$ is compact on $F^2_\alpha$ if and only if its Berezin transform tends to zero when $z$ goes to infinity. In \cite{Is13}, Isralowitz extended \cite{XZ13} to the generalized Fock space $F^2_\varphi$  with $d d^c \varphi \simeq \omega_0$. Isralowitz, Mitkovski and
the third author extended Xia and Zheng's idea further in \cite{IMW13}  to what they called ``weakly localized'' operators on $F^p_\varphi$ with $1<p<\infty$. They showed that,  if $T$ is in the $C^*$-algebra generated by the class of  weakly  localized  operators,   $T$ is compact on $F^p_\varphi$ if and only if its Berezin transform shares certain  vanishing property  near infinity.
We would like to emphasize that the prior results in the area, for example \cites{AZ98, BI12, CIL11, HL14, Is11, Is13, MSW13, MW12, Su07, WCZ12, XZ13, Zo03, Zh12}, depend strongly on two points. The  first is the use of Weyl unitary operators induced by holomorphic self mappings of the domain; and the second is the restriction on the  range of the exponent $p$, for example $p=2$ or $1<p<\infty$, so that Banach space techniques are applicable. But on $F^p_\varphi$ with $0<p<1$ and $ dd^c \varphi \simeq \omega_0$  these two points are not available.
\newline

The main purpose of this work is, on $F^p_\varphi$ with $0<p<1$ and $ dd^c \varphi \simeq \omega_0$, to study the so called ``weakly localized" operators $\textrm{WL}^\varphi_p$  and to characterize those compact operators $T\in \textrm{WL}^\varphi_p$.
The paper is divided into four sections. In Section \ref{s:two},   we introduce the concept of weakly localized operators $\textrm{WL}^\varphi_p$ for  $0<p\leq 1$,  we will characterize the compact operators in ${\textrm{WL}}_p^\varphi$,  and furthermore give a quantity equivalent to   the essential norm of an operator in ${\textrm {WL}}_p^\varphi$.
         Section \ref{s:three} is devoted to the compactness of Toeplitz operators induced by   $\textrm{BMO}$
symbols acting on $F^p_{\varphi}$ for all $0<p<\infty$, our theorem shows an operator $T$ in the  algebra generated by bounded Toeplitz operators with $\textrm{BMO}$ symbols is compact on $F^p_\varphi$ if and only if its Berezin transform satisfies a certain vanishing property at $\infty$ (more precisely, $\lim\limits_{z\to \infty} \widetilde{T}(z)=0$ when $\varphi(z)= \fr \alpha 2 |z|^2$). In Section  \ref{s:four}, we  extend Axler-Zheng's condition on linear operators $T$, which insures $T$ are bounded (or compact) on  $F^p_\alpha$ for all possible $0<p<\infty$. In the final
section, we provide some remarks and point to some open problems.
\newline

In what follows, $C$ will denote a positive constant whose value may change from one occasion to another but does not depend on the functions or operators in consideration.  For two positive quantities $A$ and $B$, the expression $A\simeq B$ means there is some $C>0$ such that $\fr 1C B\le A\le C B$.

\section{The operator class $\textrm{WL}^\varphi_p$ with $0<p\le 1$}
\label{s:two}

As a generalization of the ``strongly localized'' operators of Xia and Zheng in \cite{XZ13}, Isralowitz, Mitkovski and the third author introduced  ``weakly localized'' operators on $F^p_\varphi$ with $1<p<\infty$, see \cite{IMW13}. In this section, we first  give the definition of weakly localized operators  on  $F^p_\varphi$ when $0<p\le 1$.
 We use $\mathcal D$ to stand for the  linear span of all normalized  reproducing kernel functions $k_z(\cdot)$. It is obvious that $\mathcal D$ is dense in $F^p_\varphi$.   As in \cite{IMW13}, we will assume that the domain of every linear operator  $T$ appearing in this paper contains $ \mathcal D $, and that the function $z\mapsto T K_z$ is conjugate holomorphic.
We also assume  the range of   $T$ is in  $F^\infty_{\varphi}$. Then $\langle Tk_z,
  k_w\rangle_{F^2_\varphi}$ can make sense.

\begin{defin}
\label{d:2.1}
 Let $0<p<\infty$, set $s=\min\{1, p\}$. A linear operator $T$ from
 $\mathcal D$ to $F^\infty_\varphi$ is called
 weakly localized for $F_\varphi^p$ if
 \begin{equation}
 \label{e:2.1}
  \sup_{z\in \mathbb{C}^n}  \int_{\mathbb{C}^n} \left |\langle Tk_z,
  k_w\rangle_{F^2_\varphi}\right|^s
  dV(w)<\infty, \verb#  #  \sup_{z\in \mathbb{C}^n}  \int_{\mathbb{C}^n} \left |\langle k_z,
  Tk_w\rangle_{F^2_\varphi}\right|^s
  dV(w)<\infty;
\end{equation}
and
\begin{equation}
\label{e:2.2}
   \lim_ {r\to \infty}   \sup_{z\in \mathbb{C}^n}  \int_{B(z,r)^c} \left |\langle Tk_z,
   k_w\rangle_{F^2_\varphi}\right|^s
  dV(w)=0,
\end{equation}
\begin{equation}
\label{e:2.3}
\lim_ {r\to \infty}  \sup_{z\in \mathbb{C}^n}  \int_{B(z,r)^c}  \left |\langle k_z,
  Tk_w\rangle_{F^2_\varphi}\right|^s
  dV(w)=0.
\end{equation}
The  algebra generated by   weakly localized operators for $F^p_\varphi$ will be denoted by $\textrm{WL}_p^\varphi$.
For  $\varphi(z)= \fr \alpha 2 |z|^2$, we write $\textrm{WL}_p^\varphi=\textrm{WL}_p^\alpha$ for convenience.
\end{defin}

 When $1\leq p<\infty$   $\textrm{WL}_p^\varphi=\textrm{WL}_1^\varphi$ by  definition, and then  Definition \ref{d:2.1} was first introduced in \cite{IMW13}.   Let ${\mathcal T}^\varphi_p$ denote  the  Toeplitz algebra on $F_p^\varphi$ generated by $L^\infty$  symbols,
and let $ {\mathcal K}(F^p_\varphi)$ be the set of all compact operators on $F_p^\varphi$. We use $\|T\|_{e, F^p_\varphi} $ to stand for the essential norm of a given operator $T$ on $F^p_\varphi$
 $$
 \|T\|_{e, F^p_\varphi} = \inf\left\{ \|T-A\|_{F^p_\varphi\to F^p_\varphi} : A \in {\mathcal K}(F^p_\varphi)\right\}.
 $$
The  purpose  of  this section is to  characterize compact   operators  in $\textrm{WL}^\varphi_p$, $0<p\le 1$.
To carry out our analysis, we need some preliminary facts.
\newline

\begin{lm}[\cite{SV12}]
\label{l:2.2}
Given $\varphi$ as in the introduction, the Bergman kernel $K(\cdot, \cdot)$ for  $F^2_\varphi$ satisfies the following estimates:
\begin{enumerate}
\myitem{l:2.2e1} There exists $C$ and $ \theta>0$ such that
 $$
 |K(z,w)|e^{-\varphi(z)}e^{-\varphi(w)} \leq Ce^{-\theta|z-w|}  \   \textrm{  for } z,w\in \mathbb{C}^n.
 $$

\myitem{l:2.2e2} There exists some  $r>0$ such that
$$
\left|K(z,w)\right|e^{-\varphi(z)}e^{-\varphi(w)}\simeq 1  \  \textrm{ whenever  }  w\in B\left(z, r\right)   \textrm{ and  }  z\in\mathbb{C}^n.
 $$

\myitem{l:2.2e3} For  $0<p\leq\infty$ fixed,
 $$\|K(\cdot, z)\|_{p, \varphi}\simeq e^{\varphi(z)}\simeq \sqrt{K(z,z)},\verb#  # z\in \mathbb{C}^n. $$
\end{enumerate}
\end{lm}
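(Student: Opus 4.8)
The plan is to derive all three estimates from the two-sided curvature bound \eqref{e:1.1}, which forces $\varphi$ to have bounded geometry on balls of a fixed radius. The cornerstone is the diagonal estimate $K(z,z)\simeq e^{2\varphi(z)}$, from which part (3) is immediate because $\|K(\cdot,z)\|_{2,\varphi}^2=K(z,z)$; the remaining $L^p$ norms in (3) then follow by combining the pointwise bound (1) (which furnishes the integrable majorant for the upper estimate) with the near-diagonal bound (2) (which gives a lower estimate from integration over a fixed ball $B(z,r)$). Thus the real content lies in (1), (2) and the diagonal asymptotics.

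For the diagonal upper bound I would establish a sub-mean-value property. Using $dd^c\varphi\le M_2\omega_0$, on each ball $B(z,r)$ one may write $\varphi(\zeta)=\mathrm{Re}\,h_z(\zeta)+\rho_z(\zeta)$ with $h_z$ holomorphic and $\|\rho_z\|_{L^\infty(B(z,r))}\le C=C(r,M_2)$; then $|fe^{-\varphi}|\simeq|fe^{-h_z}|$ on the ball, and since $fe^{-h_z}$ is holomorphic its modulus squared is plurisubharmonic, so $|f(z)|^2e^{-2\varphi(z)}\lesssim\int_{B(z,r)}|f|^2e^{-2\varphi}\,dV\le\|f\|_{2,\varphi}^2$. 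Taking the supremum over $\|f\|_{2,\varphi}\le1$ and using $K(z,z)=\sup\{|f(z)|^2:\|f\|_{2,\varphi}\le1\}$ yields $K(z,z)\lesssim e^{2\varphi(z)}$. The matching lower bound is the harder half: here I would use the curvature lower bound $dd^c\varphi\ge M_1\omega_0$ to invoke H\"ormander's weighted $L^2$ estimate for $\bar\partial$. Cutting off $e^{h_z}$ by a bump $\chi$ supported in $B(z,r)$ and solving $\bar\partial u=\bar\partial(\chi e^{h_z})$ with weight $2\varphi$ produces, after subtracting the correction $u$ (which is small near $z$ since the source vanishes there), a holomorphic $f\in F^2_\varphi$ with $f(z)\simeq e^{\varphi(z)}$ and $\|f\|_{2,\varphi}\lesssim e^{\varphi(z)}$, whence $K(z,z)\gtrsim e^{2\varphi(z)}$.

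The near-diagonal estimate (2) then follows by the same local comparison. The upper bound is the Cauchy--Schwarz inequality $|K(z,w)|\le\sqrt{K(z,z)K(w,w)}\simeq e^{\varphi(z)+\varphi(w)}$, while for $w\in B(z,r)$ the lower bound comes from the peak function above: its reproducing identity $f(z)=\langle f,K_z\rangle_{F^2_\varphi}$ together with its near-constancy on $B(z,r)$ prevents $|K(z,w)|e^{-\varphi(z)-\varphi(w)}$ from dropping below a fixed positive constant.

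The essential difficulty is the off-diagonal decay (1). The Cauchy--Schwarz bound only gives the constant $C$ with no decay, so to produce the factor $e^{-\theta|z-w|}$ I would rerun the $\bar\partial$ construction against a perturbed weight. Replacing $2\varphi$ by $2\varphi(\zeta)+\theta\psi_w(\zeta)$, where $\psi_w$ is a smoothing of $|\zeta-w|$ whose complex Hessian is $O(\theta)\omega_0$, keeps \eqref{e:1.1} valid for $\theta$ small; the H\"ormander estimate in this heavier weight forces the reproducing kernel to inherit the decay $e^{-\theta|\zeta-w|}$, which after unwinding the weights gives $|K(z,w)|e^{-\varphi(z)-\varphi(w)}\le Ce^{-\theta|z-w|}$. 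Controlling the Hessian of the Lipschitz weight $\psi_w$ uniformly in $w$, and keeping every constant independent of both $z$ and $w$, is the step requiring the most care.
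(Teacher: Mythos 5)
First, a point of comparison: the paper itself offers no proof of Lemma \ref{l:2.2} --- it is quoted from \cite{SV12} --- so your sketch has to be measured against the cited source rather than against anything in the text. Your outline follows essentially the same route as that literature: pluriharmonic approximation $\varphi=\mathrm{Re}\,h_z+O(1)$ on balls of fixed radius (valid here because $dd^c\varphi$ is a closed $(1,1)$-form with uniformly bounded coefficients, so the homotopy-formula potential is uniformly bounded), sub-mean-value estimates for the diagonal upper bound, H\"ormander's $\bar\partial$-theorem for the diagonal lower bound, Cauchy--Schwarz for the near-diagonal upper bound, and Lipschitz perturbations of the weight for the off-diagonal decay; part (3) then reduces to (1), (2) and the diagonal asymptotics exactly as you say.

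Two steps, however, are genuinely incomplete as written. First, your peak-function bookkeeping is inconsistent: from $f(z)\simeq e^{\varphi(z)}$ and $\|f\|_{2,\varphi}\lesssim e^{\varphi(z)}$ the extremal property only gives $K(z,z)\ge |f(z)|^2/\|f\|_{2,\varphi}^2\gtrsim 1$, not $\gtrsim e^{2\varphi(z)}$. What the construction actually yields, and what you need, is $\|f\|_{2,\varphi}\lesssim 1$, since $\int\chi^2|e^{h_z}|^2e^{-2\varphi}\,dV=\int\chi^2e^{-2\rho_z}\,dV=O(1)$. Second, the claim that the correction $u$ ``is small near $z$ since the source vanishes there'' does not follow from the plain H\"ormander estimate: $\bar\partial u=0$ where $\chi\equiv1$, so the sub-mean-value inequality gives only $|u(z)|\le Ce^{\varphi(z)}$, with a constant $C$ that has no reason to be smaller than the constant in $|e^{h_z(z)}|\simeq e^{\varphi(z)}$; as written, $f(z)=e^{h_z(z)}-u(z)$ could a priori vanish. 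The standard repair is to solve $\bar\partial$ against the singular weight $2\varphi(\zeta)+2n\log|\zeta-z|$, whose curvature is still bounded below; square-integrability against the non-integrable singularity forces $u(z)=0$. Relatedly, in part (1) the crucial mechanism --- how a H\"ormander estimate in the weight $2\varphi+\theta\psi_w$ ``forces the kernel to inherit'' the decay --- is asserted rather than argued: a weighted $L^2$ bound for $\bar\partial$-solutions does not by itself say anything pointwise about $K(\cdot,w)$. One needs an extra step, e.g.\ the extremal (minimal-norm) characterization of $K_w$, or a Cauchy--Schwarz splitting of $\langle K_w,K_z\rangle_{F^2_\varphi}$ against the factors $e^{\pm\theta\psi_w}$ combined with diagonal bounds for the perturbed kernels. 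That transfer is precisely where the substantive work in \cite{SV12} lies, and your sketch stops short of it; likewise your near-diagonal lower bound in (2) is most cleanly completed not by ``near-constancy'' of the peak function but by a Cauchy-estimate argument applied to the holomorphic function $K(\cdot,z)e^{-h_z(\cdot)}$, which is bounded by $Ce^{\varphi(z)}$ on $B(z,2)$ and comparable to $e^{\varphi(z)}$ at $z$, hence bounded below on some smaller ball of uniform radius.
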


\begin{lm}[\cite{HL14}]
\label{l:2.3}
Suppose $0<p<\infty$ and $r>0$. Then there exists $C$ such that for  $f\in
H(\mathbb{C}^n)$ and $z\in \mathbb{C}^n$, we have
$$
\left|f(z)e^{-\varphi(z)}\right|^p\leq
C\int_{B(z,r)}\left|f(w)e^{-\varphi(w)}\right|^pdV(w)
$$
and
$$\int_{\mathbb{C}^n}\left|f(z)e^{-\varphi(z)}\right|^pd\mu(z)\leq C\int_{\mathbb{C}^n}\left|f(z)e^{-\varphi(z)}\right|^p\widehat{\mu}_{r}(z)dV(z)$$
where $\mu$ is some given positive Borel measure and  $\widehat{\mu}_r(\cdot)=\frac{\mu(B(\cdot,r))}{{V\left(B(\cdot,r)\right)}}$.
\end{lm}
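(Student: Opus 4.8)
The plan is to reduce both estimates to a single mechanism: the subharmonicity of $|f|^p$ for holomorphic $f$ and any $0<p<\infty$, combined with a uniform local approximation of the weight $\varphi$ by the real part of a holomorphic function. Granting the pointwise (first) inequality, the integral (second) inequality will drop out by Tonelli's theorem, so the heart of the matter is the pointwise bound.

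The key device is the following weight-approximation statement: for fixed $r>0$ there is $C=C(r,M_2,n)>0$ such that for every $z\in\mathbb{C}^n$ one can find a holomorphic polynomial $Q_z$ of degree at most two with $|\varphi(w)-\operatorname{Re}Q_z(w)|\le C$ for all $w\in B(z,r)$. Writing $\partial_i=\partial/\partial z_i$, I would take
$$Q_z(w)=\varphi(z)+2\sum_{i}\partial_i\varphi(z)\,(w_i-z_i)+\sum_{i,j}\partial_i\partial_j\varphi(z)\,(w_i-z_i)(w_j-z_j),$$
so that $u:=\varphi-\operatorname{Re}Q_z$ has $u(z)=0$ and vanishing gradient at $z$, the pure holomorphic second derivatives of $\varphi$ at $z$ are exactly absorbed, and $dd^cu=dd^c\varphi\le M_2\omega_0$ because $\operatorname{Re}Q_z$ is pluriharmonic. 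The bounded Levi form then confines $u$ on the fixed ball $B(z,r)$, uniformly in $z$.

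For the first inequality, set $g=fe^{-Q_z}$, which is entire. As $\log|g|$ is plurisubharmonic, $|g|^p=\exp(p\log|g|)$ is subharmonic on $\mathbb{C}^n$, so the sub-mean value inequality over $B(z,r)$ gives $|g(z)|^p\le V(B(z,r))^{-1}\int_{B(z,r)}|g|^p\,dV$. Since $u(z)=0$ we have $|f(z)e^{-\varphi(z)}|=|g(z)|$, while on the ball $|g(w)|=|f(w)e^{-\varphi(w)}|\,e^{u(w)}\le e^{C}|f(w)e^{-\varphi(w)}|$; as $V(B(z,r))=c_nr^{2n}$ is independent of $z$, this yields
$$\left|f(z)e^{-\varphi(z)}\right|^p\le \frac{e^{pC}}{c_nr^{2n}}\int_{B(z,r)}\left|f(w)e^{-\varphi(w)}\right|^p\,dV(w),$$
which is the first assertion. (Alternatively, the pointwise bound can be extracted from the kernel estimate in Lemma~\ref{l:2.2}.)

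The second inequality follows by integrating the first against $d\mu(z)$ and swapping the order of integration via Tonelli, using $w\in B(z,r)\Leftrightarrow z\in B(w,r)$:
$$\int_{\mathbb{C}^n}\left|f(z)e^{-\varphi(z)}\right|^p d\mu(z)\le C\int_{\mathbb{C}^n}\left|f(w)e^{-\varphi(w)}\right|^p\,\mu\!\left(B(w,r)\right)dV(w),$$
and then $\mu(B(w,r))=V(B(w,r))\,\widehat{\mu}_r(w)=c_nr^{2n}\,\widehat{\mu}_r(w)$ absorbs into the constant, giving the claim. I expect the main obstacle to be the weight-approximation step: with $\varphi$ merely $C^2$, the naive Taylor-remainder estimate for $u$ is not uniform in $z$, so the uniform control $|u|\le C$ on $B(z,r)$ must instead be read off from the Hessian bound $dd^c\varphi\le M_2\omega_0$ (equivalently, by a local $\bar\partial$/potential comparison). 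Once that uniform bound is in hand, the remaining steps are soft and work identically for every $0<p<\infty$.
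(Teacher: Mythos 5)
Your overall architecture --- a uniform pluriharmonic approximation of $\varphi$ on balls of fixed radius, the sub-mean-value inequality for the plurisubharmonic function $|fe^{-h}|^p$, and then Tonelli plus the symmetry $w\in B(z,r)\Leftrightarrow z\in B(w,r)$ --- is exactly the route of the proof this paper points to (the paper does not prove the lemma itself but cites \cite{HL14}, where this is the argument), and your derivation of the second inequality from the first is correct. The genuine gap is in the step you flagged, and it is not a matter of finding a cleverer estimate: the claim that $u=\varphi-\operatorname{Re}Q_z$ is uniformly bounded on $B(z,r)$ because $dd^cu=dd^c\varphi\le M_2\omega_0$ and $u$ vanishes to second order (in the pure derivatives) at $z$ is simply false. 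The operator $dd^c$ annihilates pluriharmonic functions, so the hypothesis $dd^c\varphi\simeq\omega_0$ gives no control on the pluriharmonic part of $\varphi$, and that part need not stay close to its own $2$-jet on a ball of fixed radius. Concretely, take $n=1$ and $\varphi(w)=|w|^2+\operatorname{Re}(e^{w})$, which is an admissible weight since $dd^c\varphi=\omega_0$ exactly. For your $Q_z$ one computes, writing $h=w-z$,
$$
u(z+h)=|h|^2+\operatorname{Re}\!\left[e^{z}\left(e^{h}-1-h-\frac{1}{2}h^2\right)\right],
$$
so taking $h=r/2$ real gives $\sup_{B(z,r)}|u|\ge c_r\,e^{\operatorname{Re}z}\to\infty$ as $z\to+\infty$ along the real axis. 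No holomorphic polynomial built from the $2$-jet of $\varphi$ at $z$ --- indeed no function determined by finitely many derivatives of $\varphi$ at $z$ --- can serve as $Q_z$.

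What is true, and what \cite{HL14} actually uses (going back to Lindholm and to \cite{SV12}), is: for each $z$ there is a holomorphic function $h_z$ on $B(z,r)$, in general not a polynomial and depending on $\varphi$ on all of a slightly larger ball, with $\sup_{B(z,r)}|\varphi-\operatorname{Re}h_z|\le C(r,M_1,M_2,n)$. It is produced by solving $dd^cv=dd^c\varphi$ on $B(z,2r)$ with a uniform $L^\infty$ bound on $v$ in terms of $\|dd^c\varphi\|_\infty$ (for $n=1$ the Newtonian potential of $\Delta\varphi$ suffices, since harmonic equals pluriharmonic there; for $n>1$ one needs a potential/homotopy solution operator for $dd^c$, or two applications of $\bar\partial$-machinery with estimates), and then setting $\operatorname{Re}h_z=\varphi-v$, which is pluriharmonic on the simply connected ball. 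With that lemma substituted for your $Q_z$, the rest of your proof goes through verbatim and coincides with the cited proof. Two smaller remarks: your $Q_z$-normalization is internally consistent (it would give the order-three Taylor remainder if $\varphi$ were globally pluriharmonic), so the failure is structural rather than a slip of constants; and your parenthetical alternative via Lemma~\ref{l:2.2} is circular in spirit, since the kernel estimates of \cite{SV12} are themselves derived from this local approximation lemma.
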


Let $d(\cdot, \cdot)$   be the Euclidean distance on $\mathbb{C}^n$.   Given some domain $\Omega \subseteq \mathbb{C}^n$,
write $\Omega^+ =\{z\in \mathbb{C}^n: d(z, \Omega)<1\}$, and $\Omega^+$  is again a domain.  Set   $ {\mathcal L}=\left\{a+bi: a, b\in \fr 14 {\mathbb{Z}}^n\right\}$,  ${\mathcal L}$ is countable so that
 we may write ${\mathcal L}=\{z_1, z_2, \cdots, z_j, \cdots\}$. It is obvious that ${\mathcal L}$ forms a $1/4$-lattice in $\mathbb{C}^n$ (see \cite{Zh12} for the definition). For $E\subset \mathbb{C}^n$,  let $\chi_E$ be the characteristic function of $E$.
  We have some absolute constant $N>0$ such that
\begin{equation}
\label{e:2.4}
      \sum_ {z_j\in {\mathcal L}} \chi_ {B(z_j,  \frac{1}{2})   }(w)  \le N \verb#   # \textrm{ for } w\in \mathbb{C}^n.
\end{equation}

\begin{lm}
\label{l:2.4}
 For $0<p\le 1$  there is some constant $C$ (depending only on $p$ and $n$) such that for any  domain $\Omega\subset \mathbb{C}^n$ and    $f\in H(\mathbb{C}^n)$,
$$
\left(  \int_\Omega \left|f(w)e^{-\varphi(w)}\right| dV(w) \right)^p \le  C  \int_{\Omega^+} \left|f(w)e^{-\varphi(w)}\right|^p  dV(w).
$$
\end{lm}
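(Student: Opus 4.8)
The plan is to pass from a global $L^1$ bound on $\Omega$ to a global $L^p$ bound on $\Omega^+$ by localizing to a bounded-overlap cover of $\Omega$ by small balls, applying on each ball a local ``reverse H\"older'' estimate available for holomorphic functions through the sub-mean value inequality of Lemma~\ref{l:2.3}, and then gluing the local estimates via the superadditivity inequality $\left(\sum_j a_j\right)^p\le \sum_j a_j^p$, valid for nonnegative $a_j$ precisely because $0<p\le 1$. Writing $g=|f e^{-\varphi}|$, the claim reads $\left(\int_\Omega g\,dV\right)^p\le C\int_{\Omega^+} g^p\,dV$.

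First I would fix a cover of $\mathbb{C}^n$ by balls $B_j=B(a_j,\tfrac14)$ whose centers form a lattice fine enough (depending only on $n$) that the cover has bounded overlap, exactly as in \eqref{e:2.4}; let $N=N(n)$ be the overlap constant, which will also bound the overlap of the slightly larger balls $\tilde B_j=B(a_j,\tfrac12)$. Let $J=\{j: B_j\cap\Omega\ne\emptyset\}$. The key geometric point, built into the definition $\Omega^+=\{z:d(z,\Omega)<1\}$, is that for $j\in J$ one has $\tilde B_j\subset\Omega^+$: choosing $w_0\in B_j\cap\Omega$, any $w\in\tilde B_j$ satisfies $d(w,\Omega)\le |w-a_j|+|a_j-w_0|<\tfrac12+\tfrac14<1$. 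Since the $B_j$ cover $\Omega$, and $g\ge 0$, subadditivity of the integral gives
$$
\int_\Omega g\,dV\le \sum_{j\in J}\int_{B_j} g\,dV.
$$

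The local estimate is where holomorphy enters. For $z\in B_j$ we have $B(z,\tfrac14)\subset\tilde B_j$, so Lemma~\ref{l:2.3} (applied with exponent $p$ and radius $\tfrac14$) gives $g(z)^p\le C\int_{\tilde B_j} g^p\,dV$; taking the supremum over $z\in B_j$ and using $\int_{B_j}g\,dV\le |B_j|\sup_{B_j}g$ yields
$$
\left(\int_{B_j} g\,dV\right)^p\le |B_j|^p \sup_{B_j} g^p\le C\int_{\tilde B_j} g^p\,dV,
$$
with $C=C(p,n)$, since $|B_j|$ depends only on $n$. Now I would invoke $0<p\le1$: applying $\left(\sum_j a_j\right)^p\le\sum_j a_j^p$ to the first display and then inserting the local estimate gives
$$
\left(\int_\Omega g\,dV\right)^p\le\sum_{j\in J}\left(\int_{B_j} g\,dV\right)^p\le C\sum_{j\in J}\int_{\tilde B_j} g^p\,dV.
$$
Finally, since each $\tilde B_j\subset\Omega^+$ and $\sum_j\chi_{\tilde B_j}\le N$, the last sum equals $\int_{\Omega^+}\left(\sum_{j\in J}\chi_{\tilde B_j}\right) g^p\,dV\le N\int_{\Omega^+} g^p\,dV$, which closes the estimate.

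I expect the only real subtlety to be the bookkeeping of radii: one must choose the covering radius, the enlargement radius, and the radius in Lemma~\ref{l:2.3} so that the enlarged balls both stay inside $\Omega^+$ (using the unit slack in $\Omega^+$) and retain bounded overlap, all with constants depending only on $n$. The conceptual crux, and the reason the statement is restricted to $0<p\le1$, is the superadditivity inequality $\left(\sum_j a_j\right)^p\le\sum_j a_j^p$, which fails for $p>1$ and is exactly what converts the sum of local $L^1$-to-$L^p$ comparisons into the desired global inequality.
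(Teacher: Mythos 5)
Your proof is correct and follows essentially the same route as the paper: a $1/4$-lattice cover with bounded overlap (the paper's \eqref{e:2.4}), the superadditivity $\left(\sum_j a_j\right)^p\le\sum_j a_j^p$ for $0<p\le 1$, the sub-mean value inequality of Lemma~\ref{l:2.3} to pass from local sup to local $L^p$ on the enlarged balls, and the observation that those enlarged balls lie in $\Omega^+$. The only cosmetic difference is bookkeeping (you index by $J=\{j:B_j\cap\Omega\ne\emptyset\}$ where the paper uses $d(z_j,\Omega)<1/4$), and you are in fact slightly more careful than the paper in noting the lattice must be fine enough, depending on $n$, for the radius-$1/4$ balls to cover.
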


\begin{proof}
It is trivial to see that  $(u+v)^p\le u^p+v^p$ for   positive $u, v$ and $0<p\le 1$. Applying Lemma \ref{l:2.3} and \eqref{e:2.4},  for $f\in H(\mathbb{C}^n)$ we have
\beqm
\left(  \int_\Omega \left|f(w)e^{-\varphi(w)}\right| dV(w) \right)^p & \le &   \left( \sum_ {z_j\in {\mathcal L}} \int_{\Omega \cap B(z_j,   1/4 )} \left|f(w)e^{-\varphi(w)}\right| dV(w) \right)^p\\
    & \le &  C\sum_ {z_j\in {\mathcal L},  d(z_j, \Omega)<1/4}  \max_{| w-z_j|\le  1/4    } \left|f(w)e^{-\varphi(w)}\right|^p\\
    & \le & C  \sum_ {z_j\in {\mathcal L},  d(z_j, \Omega)<1/4}  \int _{| w-z_j|<  1/2   } \left|f(w)e^{-\varphi(w)}\right|^p dV(w)\\
    & \le & C  \int _{\Omega^+  } \sum_ {z_j\in {\mathcal L},  d(z_j, \Omega)<1/4} \chi_ {B(z_j,  1/2)   }(w)   \left|f(w)e^{-\varphi(w)}\right|^p dV(w)\\
    & \le & C   \int _{\Omega^+  }   \left|f(w)e^{-\varphi(w)}\right|^p dV(w).
 \eqm
It is easy to check that the constants $C$ above depend only on $p$ and $n$.
\end{proof}

With the assumption that $w \mapsto TK_w$ is conjugate holomorphic, we know $\langle T K_w, K_z \rangle$ is conjugate holomorphic with $w$. And also, $\langle T K_z, K_w \rangle_{F^2_\varphi}$ is holomorphic with $w$. For $0<p<1$, apply Lemma \ref{l:2.4} to get
\beqm
    && \int_\Omega |\langle Tk_z,  k_w\rangle_{F^2_\varphi}| dV(w) =   \int_\Omega |\langle Tk_z,  K_w\rangle_{F^2_\varphi} e^{-\varphi(w)} | dV(w) \\
    &&\verb#                  #\le C  \left( \int_{\Omega^+}|\langle Tk_z,  K_w\rangle_{F^2_\varphi} e^{-\varphi(w)} |^p dV(w) \right)^{\frac{1}{p}}\\
    &&\verb#                  # =C  \left( \int_{\Omega^+} |\langle Tk_z,  k_w\rangle_{F^2_\varphi}  |^p dV(w) \right)^{\frac{1}{p}}.
\eqm
And, similarly
$$
     \int_\Omega |\langle k_z,  Tk_w\rangle_{F^2_\varphi}| dV(w) \le C \left( \int_{\Omega^+} |\langle k_z,  Tk_w\rangle_{F^2_\varphi}  |^p dV(w) \right)^{\frac{1}{p}}.
$$
These two inequalities  tell  us  $ \textrm{WL}_p^\varphi \subset  \textrm{WL}_1^\varphi$  with $0<p\le 1$. With the relation $\langle T K_z, K_w\rangle_{F^2_\varphi}=\langle K_z, T^* K_w\rangle_{F^2_\varphi}$,  we know $T^*$ is well defined on $\mathcal D$. In \cite{IMW13} it is pointed out that $\textrm{WL}_1^\varphi$ is contained in the set of all bounded operators on $F^p_\varphi$  for all $1\le p< \infty$. When $0<p<1$, we have the two following lemmas.

\begin{lm}
\label{l:2.5}
For $0<p\leq 1$, if    $T\in \textrm {WL}_p^\varphi$  then $T$ is bounded on $F^p_\varphi$.
\end{lm}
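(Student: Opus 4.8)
The plan is to prove the norm estimate $\|Tf\|_{p,\varphi}\le C\|f\|_{p,\varphi}$ for $f$ in the dense subspace $\mathcal D$ and then extend by density, so that it suffices to treat finite combinations of normalized kernels. The starting point is a reproducing-kernel representation of $T$: for $f\in\mathcal D$, using the reproducing formula $g(w)=\langle g,K_w\rangle_{F^2_\varphi}$, the standing assumption that $u\mapsto TK_u$ is conjugate holomorphic, and that $TK_u\in F^\infty_\varphi$, I would establish $Tf(w)=\int_{\mathbb C^n} f(u)\,(TK_u)(w)\,e^{-2\varphi(u)}\,dV(u)$. Combined with Lemma \ref{l:2.2} (so that $\|K_u\|_{2,\varphi}\simeq e^{\varphi(u)}$), this yields the pointwise majorant
\[
 |Tf(w)|e^{-\varphi(w)}\le C\int_{\mathbb C^n}\big(|f(u)|e^{-\varphi(u)}\big)\,\big|\langle Tk_u,k_w\rangle_{F^2_\varphi}\big|\,dV(u).
\]

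The heart of the matter is to integrate the $p$-th power of this inequality over $w$ when $0<p<1$, where the triangle inequality and Schur-type arguments are unavailable. To get around this I would discretize the $u$-integral over the lattice $\mathcal L$ and convert the resulting $L^1$ masses into $L^p$ masses using holomorphy. Writing $h=|f|e^{-\varphi}$ and $\mathcal K(w,u)=|\langle Tk_u,k_w\rangle_{F^2_\varphi}|$, I split $\int_{\mathbb C^n}h\,\mathcal K\,dV(u)=\sum_j\int_{B(z_j,1/4)}h\,\mathcal K\,dV(u)$ and bound each term by $(\sup_{B(z_j,1/4)}h)\int_{B(z_j,1/4)}\mathcal K(w,\cdot)\,dV$. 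Here I apply Lemma \ref{l:2.3} to $h$ to replace the supremum by $C\big(\int_{B(z_j,1/4)^+}h^p\big)^{1/p}$; and, observing that $u\mapsto\langle TK_u,K_w\rangle_{F^2_\varphi}=(TK_u)(w)$ is conjugate holomorphic, so that $\mathcal K(w,\cdot)$ equals $e^{-\varphi}$ times the modulus of a holomorphic function of $u$, I apply Lemma \ref{l:2.4} to pass from $\int_{B(z_j,1/4)}\mathcal K\,dV$ to $C\big(\int_{B(z_j,1/4)^+}\mathcal K(w,\cdot)^p\big)^{1/p}$. Thus each ball contributes at most $C(a_j b_{j,w})^{1/p}$ with $a_j=\int_{B(z_j,1/4)^+}h^p\,dV$ and $b_{j,w}=\int_{B(z_j,1/4)^+}\mathcal K(w,u)^p\,dV(u)$.

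With the summand now carrying exponent $1/p$, the elementary inequality $(\sum_j c_j)^p\le\sum_j c_j^p$ for $0<p\le1$ (already used in the proof of Lemma \ref{l:2.4}) gives $(|Tf(w)|e^{-\varphi(w)})^p\le C\sum_j a_j\,b_{j,w}$. Integrating in $w$ and applying Fubini, the key gain is that $\int_{\mathbb C^n}b_{j,w}\,dV(w)=\int_{B(z_j,1/4)^+}\big(\int_{\mathbb C^n}|\langle Tk_u,k_w\rangle|^s\,dV(w)\big)\,dV(u)$ is bounded uniformly in $j$ by the first supremum in \eqref{e:2.1}, since $s=p$. Hence $\|Tf\|_{p,\varphi}^p\le C\sum_j a_j$, and the bounded-overlap property \eqref{e:2.4} of the lattice balls finally bounds $\sum_j a_j$ by $C\|f\|_{p,\varphi}^p$, completing the estimate.

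The step I expect to be the main obstacle is the legitimate extraction of the $p$-th power from the $u$-integral: since $L^p$ is not normed for $p<1$, one cannot simply invoke Minkowski or a Schur test, and the device that makes it work is the dual use of the sub-mean-value inequalities (Lemma \ref{l:2.3} on $f$, Lemma \ref{l:2.4} on the conjugate-holomorphic kernel $u\mapsto\langle Tk_u,k_w\rangle$) to turn the off-diagonal control recorded in \eqref{e:2.1} into a genuine $L^p\to L^p$ bound. A secondary technical point to check carefully is the justification of the integral representation of $Tf$, which relies on the standing hypotheses on the domain and range of $T$ together with the reproducing property; restricting to $f\in\mathcal D$ keeps this step elementary.
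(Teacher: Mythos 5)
Your proposal is correct, and it rests on the same pillars as the paper's proof: the reproducing--kernel representation of $Tf$ on $\mathcal D$, the sub-mean-value Lemmas \ref{l:2.3} and \ref{l:2.4} to convert an $L^1$ integral into an $L^p$ integral (the only substitute for the triangle inequality when $p<1$), and finally Fubini against the first condition in \eqref{e:2.1}, followed by density of $\mathcal D$. Where you genuinely differ is in how the $L^1$-to-$L^p$ conversion is executed. The paper observes that the product $w\mapsto f(w)\,\overline{\langle TK_w,K_z\rangle_{F^2_\varphi}}$ is itself entire (holomorphic $f$ times the conjugate of a conjugate-holomorphic function), so a \emph{single} application of Lemma \ref{l:2.4} with $\Omega=\mathbb{C}^n$ (rigorously, with the weight $2\varphi$, which also satisfies \eqref{e:1.1}) yields at once
$$
\left|Tf(z)e^{-\varphi(z)}\right|^p\le C\int_{\mathbb{C}^n}\left|f(w)\,\langle Tk_w,k_z\rangle_{F^2_\varphi}\,e^{-\varphi(w)}\right|^p dV(w),
$$
after which integration in $z$ and Fubini finish the job; no lattice decomposition is needed at this stage. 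Your route instead re-derives, in effect, a bilinear version of Lemma \ref{l:2.4}: decompose over the lattice, apply Lemma \ref{l:2.3} to $f$ and Lemma \ref{l:2.4} to the kernel factor separately on each ball, then reassemble via $(\sum_j c_j)^p\le\sum_j c_j^p$, Tonelli, and bounded overlap. This costs more work but is sound, and its small imprecisions are ones the paper's own proof of Lemma \ref{l:2.4} shares: the splitting over the balls $B(z_j,1/4)$ is an inequality, not an equality (the balls overlap, and one needs them to cover $\mathbb{C}^n$); $|\langle Tk_u,k_w\rangle_{F^2_\varphi}|$ is only \emph{comparable} to, not equal to, $e^{-\varphi(u)}$ times the modulus of a function holomorphic in $u$, since $1/\|K_u\|_{2,\varphi}$ is not holomorphic; and the overlap constant you invoke must be the one for the enlarged balls $B(z_j,1/4)^+$ rather than the balls $B(z_j,1/2)$ appearing in \eqref{e:2.4}. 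None of these affects correctness; what the paper's shortcut buys is brevity, while your version makes explicit that holomorphy of \emph{both} factors is what turns the localization hypothesis \eqref{e:2.1} into an $L^p\to L^p$ bound.
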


\begin{proof}
Set
$$
       G(T)= \max\left \{ \sup_{z\in \mathbb{C}^n}  \int_{\mathbb{C}^n} \left |\langle Tk_z,
     k_w\rangle_{F^2_\varphi}\right|^p
        dV(w),   \sup_{z\in \mathbb{C}^n}  \int_{\mathbb{C}^n} \left |\langle k_z,
       Tk_w\rangle_{F^2_\varphi}\right|^p
      dV(w) \right\}.
$$
 Then,  $G(T)<\infty$.
 For $f\in \mathcal D$, we have
\begin{equation}
\label{e:2.5}
   Tf(z)= \langle Tf, K_z  \rangle_{F^2_\varphi} =  \langle f, T^*K_z\rangle_{F^2_\varphi}= \int_{\mathbb{C}^n} f(w) \langle K_w, T^*K_z  \rangle_{F^2_\varphi} e^{-2\varphi(w)} dV(w).
\end{equation}
 Applying \eqref{e:2.5}, Lemma \ref{l:2.2} (estimate \hyperlink{l:2.2e3}{(3)}) and Lemma \ref{l:2.4} with $\Omega=\mathbb{C}^n$  to have
\beqm
          && \left| Tf(z)e^{-\varphi(z)} \right| ^p  \le C   \left( \int_{\mathbb{C}^n} \left|  f(w) \langle K_w, T ^* k_z  \rangle_{F^2_\varphi}\right| e^{- 2\varphi(w)} dV(w)\right)^p\\
           &&\verb#            # \le C   \left( \int_{\mathbb{C}^n}|  f(w)   \langle T k_w,    k_z\rangle_{F^2_\varphi}  |  e^{- \varphi(w)} dV(w)\right)^p\\
           &&\verb#            # \le C   \int_{\mathbb{C}^n}\left| f(w)   \langle T k_w,    k_z\rangle_{F^2_\varphi} e^{- \varphi(w)}\right|^p  dV(w) .
\eqm
Now, integrate both sides over $\mathbb{C}^n$, and apply Fubini's Theorem to obtain
    $$
      \|T f\|^p_{p, \varphi} \le C \int_{\mathbb{C}^n} \left|  f(w)  e^{- \varphi(w)}\right|^p  dV(w) \int_{\mathbb{C}^n} \left|\langle T k_w,    k_z\rangle_{F^2_\varphi} \right|^p  dV(z)
       = C G(T) \|f\|^p_{p, \varphi}.
    $$
 When $0<p<1$, although $F^p_\varphi$ is only a    Fr\'{e}chet space,  with $P|_{F^p_\varphi}= \textrm{Id}$   we know that $ \mathcal D $ is dense in $F^p_\varphi$.
  Therefore, $T$ is bounded on $F^p_\varphi$.
\end{proof}

Isralowitz, Mitkovski and the third author demonstrated in \cite{IMW13} that  $\textrm{WL}_1^\varphi$ is a $*$-algebra.  Lemma \ref{t:2.6} tells us   $\textrm{WL}_p^\varphi$ is  closed   under the $F^p_\varphi$ operator norm while $0<p\leq 1$.


\begin{lm}
\label{t:2.6}
For $0<p\leq 1$, $\textrm {WL}_p^\varphi$ is closed under the operator norm on $F^p_\varphi$.
\end{lm}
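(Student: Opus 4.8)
The plan is to show directly that the operator-norm limit $T$ of a sequence $\{T_j\}\subset\textrm{WL}_p^\varphi$ again satisfies the four conditions \eqref{e:2.1}, \eqref{e:2.2}, \eqref{e:2.3} of Definition \ref{d:2.1} with $s=p$. The starting observation is that operator-norm convergence forces uniform convergence of the two Berezin-type kernels. For any operator $S$ on $F^p_\varphi$ the reproducing property gives $\langle Sk_z,k_w\rangle_{F^2_\varphi}=(Sk_z)(w)/\|K_w\|_{2,\varphi}$, so by the pointwise estimate of Lemma \ref{l:2.3} together with $\|K_w\|_{2,\varphi}\simeq e^{\varphi(w)}$ and $\|k_z\|_{p,\varphi}\simeq 1$ (Lemma \ref{l:2.2}) one obtains
\[
\sup_{z,w\in\mathbb C^n}\left|\langle Sk_z,k_w\rangle_{F^2_\varphi}\right|\le C\|S\|_{F^p_\varphi\to F^p_\varphi},\qquad \sup_{z,w\in\mathbb C^n}\left|\langle k_z,Sk_w\rangle_{F^2_\varphi}\right|\le C\|S\|_{F^p_\varphi\to F^p_\varphi}.
\]
Applying this to $S=T_j-T$ shows both kernels of $T_j$ converge uniformly to those of $T$.

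First I would dispatch the ``first slot'' conditions, i.e.\ the half of \eqref{e:2.1} and all of \eqref{e:2.2} involving $\langle Tk_z,k_w\rangle$. The key identity is
\[
\int_\Omega\left|\langle Tk_z,k_w\rangle_{F^2_\varphi}\right|^p\,dV(w)\simeq\int_\Omega\left|(Tk_z)(w)e^{-\varphi(w)}\right|^p\,dV(w)\le\|Tk_z\|_{p,\varphi}^p\le C\|T\|_{F^p_\varphi\to F^p_\varphi}^p,
\]
which bounds the $T$-integral by the operator norm uniformly in $z$ and $\Omega$. Writing $|\langle Tk_z,k_w\rangle|^p\le|\langle T_jk_z,k_w\rangle|^p+|\langle(T-T_j)k_z,k_w\rangle|^p$ and using the same identity for $T-T_j$ on $\Omega=\mathbb C^n$ to absorb the error into $C\|T-T_j\|^p_{F^p_\varphi\to F^p_\varphi}$, the finiteness in \eqref{e:2.1} and the tail decay \eqref{e:2.2} for $T$ follow from those for a single $T_j$ with $j$ large.

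The ``second slot'' conditions — the remaining half of \eqref{e:2.1} and all of \eqref{e:2.3}, involving $\langle k_z,Tk_w\rangle$ — are where the real work lies, and I expect this to be the main obstacle. Since $\langle k_z,Sk_w\rangle_{F^2_\varphi}=\langle S^* k_z,k_w\rangle_{F^2_\varphi}$, these are precisely the first-slot quantities for the $F^2$-adjoint, and $\int_{\mathbb C^n}|\langle k_z,Sk_w\rangle|^p\,dV(w)\simeq\|S^* k_z\|_{p,\varphi}^p$. The difficulty is that operator-norm convergence of $T_j\to T$ on $F^p_\varphi$ does not by itself control $\|(T-T_j)^* k_z\|_{p,\varphi}$, so the clean reduction used for the first slot is unavailable. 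To get around this I would exploit that $\textrm{WL}_p^\varphi$ is a $*$-algebra: the symmetry of Definition \ref{d:2.1} under $T\mapsto T^*$ gives $T_j^*\in\textrm{WL}_p^\varphi$, so each $T_j$ already carries a genuine tail estimate \eqref{e:2.3}. The plan is then to split $\int_{B(z,r)^c}|\langle k_z,Tk_w\rangle|^p\,dV(w)$ against a fixed $T_j$, estimate the near-diagonal piece on $B(z,r)$ of the difference $T-T_j$ by the uniform kernel bound times the volume $V(B(z,r))$ (harmless for fixed $r$), and control the off-diagonal piece by the tail of $T_j$. The heart of the matter — and the step I expect to be hardest — is making the tail bound \eqref{e:2.3} uniform in $j$; this is exactly the point where operator-norm control of $T_j$ must be upgraded to control of the adjoint side, and I would aim to show that an operator-norm Cauchy sequence in $\textrm{WL}_p^\varphi$ is Cauchy for the stronger quantity $G(\cdot)$ appearing in the proof of Lemma \ref{l:2.5}, after which completeness in that quasi-norm would furnish a limit in $\textrm{WL}_p^\varphi$ forced to coincide with $T$.
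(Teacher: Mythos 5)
Your ``first slot'' argument is, in substance, the paper's entire proof of this lemma: the paper takes an approximant $A_\varepsilon\in\textrm{WL}_p^\varphi$ with $\|T-A_\varepsilon\|_{F^p_\varphi\to F^p_\varphi}<\varepsilon$, splits the tail integral with the $p$-triangle inequality, enlarges the integral of the difference term to all of $\mathbb{C}^n$, and absorbs it via
$\int_{\mathbb{C}^n}\left|\langle(T-A_\varepsilon)k_z,k_w\rangle_{F^2_\varphi}\right|^p dV(w)\simeq\|(T-A_\varepsilon)k_z\|^p_{p,\varphi}\le C\|T-A_\varepsilon\|^p_{F^p_\varphi\to F^p_\varphi}\|k_z\|^p_{p,\varphi}$.
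That is exactly your reduction, so on this half your proposal and the paper coincide. The divergence is that the paper stops there: it verifies only \eqref{e:2.2} (and, implicitly, the first half of \eqref{e:2.1}) for the limit operator, and is silent on \eqref{e:2.3} and the second half of \eqref{e:2.1}. So the ``real work'' you isolate on the adjoint side is work the published proof does not do either; you have correctly identified the genuine difficulty, but your plan does not close it.

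Concretely, two steps of your second-slot plan fail. First, the proposed split of $\int_{B(z,r)^c}\left|\langle k_z,Tk_w\rangle_{F^2_\varphi}\right|^p dV(w)$ into a ``near-diagonal piece on $B(z,r)$'' bounded by the uniform kernel estimate times $V(B(z,r))$ is vacuous: the domain of integration is $B(z,r)^c$, a set of infinite volume, on which the pointwise bound $C\|T-T_j\|_{F^p_\varphi\to F^p_\varphi}$ is not $p$-integrable, and there is no near piece to exploit. Second, your fallback --- showing that an operator-norm Cauchy sequence in $\textrm{WL}_p^\varphi$ is Cauchy for the quantity $G(\cdot)$ of Lemma \ref{l:2.5} --- is precisely the assertion that $\|S\|_{F^p_\varphi\to F^p_\varphi}$ controls $\sup_z\int_{\mathbb{C}^n}\left|\langle k_z,Sk_w\rangle_{F^2_\varphi}\right|^p dV(w)\simeq\sup_z\|S^*k_z\|^p_{p,\varphi}$, which is exactly what is not automatic when $0<p\le 1$: the adjoint acts naturally on the dual $F^\infty_\varphi$, so operator-norm control of $S$ on $F^p_\varphi$ yields only the sup bound $\sup_{z,w}\left|\langle k_z,Sk_w\rangle_{F^2_\varphi}\right|\le C\|S\|_{F^p_\varphi\to F^p_\varphi}$ (your own first display), not integrability in $w$; and the map $S\mapsto S^*$ is not continuous for this operator norm, so the fact that each $T_j^*$ lies in $\textrm{WL}_p^\varphi$ gives no uniform-in-$j$ control of the adjoint-side tails. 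Thus \eqref{e:2.3} and the second half of \eqref{e:2.1} for the limit remain unproven in your proposal --- exactly as they are unaddressed in the paper's own argument.
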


\begin{proof}
 We only need to prove $\overline{\textrm{WL}}_p^\varphi= \textrm{WL}_p^\varphi$. For $T\in  \overline{\textrm {WL}}_p^\varphi$ we show
  $$
     \lim_{r\to \infty } \sup_{z\in \mathbb{C}^n} \int_{B(z, r)^c}  \left|\langle Tk_z, k_w \rangle_{F^2_\varphi}\right|^p dV(w) =0.
  $$
  In fact, for any $\varepsilon >0$ we have some $A_\varepsilon\in   \textrm {WL}_p^\varphi $ such that $\|T-A_\varepsilon\|_{F^p_\varphi\to F^p_\varphi} <\varepsilon$.
  For this $A_\varepsilon$ we have some $r$ such that
  $$
    \sup_{z\in \mathbb{C}^n}  \int_{B(z, r)^c}  \left|\langle A_\varepsilon k_z, k_w \rangle_{F^2_\varphi}\right|^p dV(w)<\varepsilon.
  $$
This implies
  \beqm
   &&\verb#  #  \int_{B(z, r)^c}  \left|\langle Tk_z, k_w \rangle_{F^2_\varphi}\right|^p dV(w)\\
     &&\le    \int_{B(z, r)^c}  \left|\langle( T -A_\varepsilon) k_z, k_w \rangle_{F^2_\varphi}\right|^p
            dV(w) + \int_{B(z, r)^c}  \left|\langle A_\varepsilon k_z, k_w \rangle_{F^2_\varphi}\right|^p dV(w) \\
    &&\le \int_{\mathbb{C}^n}  \left|\langle( T -A_\varepsilon) k_z, k_w \rangle_{F^2_\varphi}\right|^p dV(w) +
              \int_{B(z, r)^c}  \left|\langle A_\varepsilon k_z, k_w \rangle_{F^2_\varphi}\right|^p dV(w)\\
      &&=\left\|\left(T-A_\varepsilon\right) k_z\right\|^p_{p, \varphi} +  \int_{B(z, r)^c}  \left|\langle A_\varepsilon k_z, k_w \rangle_{F^2_\varphi}\right|^p dV(w)\\
       && \le  \left\|T-A_\varepsilon\right\|^p_{F^p_\varphi\to F^p_\varphi} \|k_z\|^p_{p, \varphi}+ \int_{B(z, r)^c}
              \left|\langle A_\varepsilon k_z, k_w \rangle_{F^2_\varphi}\right|^p dV(w)\\
        &&  <C \varepsilon,
     \eqm
 where the constant $C$ does not depend on $\varepsilon$.
 \end{proof}

To  characterize the compactness of those  $T\in \textrm{WL}^\varphi_p$  in the case $0<p\le 1$, we will borrow ideas from
 \cite{Su07} and will be approximating a given operator $T\in {\textrm{WL}}_p^\varphi $ by infinite sums of well localized pieces. To get this approximation we need the following covering lemma from \cite{IMW13}.   See also \cite{BI12}*{Lemma 3.1}.

\begin{lm}
\label{l:3.1}
There exists a positive integer $N$ such that for each $r>0$ there is a   covering ${\mathcal  F}_r= \{F_j\}_{j=1}^\infty$ of $\mathbb{C}^n$ by
disjoint Borel sets satisfying:
\begin{enumerate}[{\normalfont (1)}]
\item every point of $\mathbb{C}^n$ belongs to at most $N$ of the sets $G_j= \{z:  d(z, F_j)\le r\}$;
\item $ \textrm{diam} \,   F_j\le 2r$ for every $j$.
\end{enumerate}
\end{lm}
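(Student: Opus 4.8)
The goal is a covering lemma of Whitney type: for each scale $r>0$, we want a partition $\{F_j\}$ of $\mathbb{C}^n$ into Borel sets of bounded diameter, whose $r$-neighborhoods $G_j$ have bounded overlap. My plan is to construct this directly from a maximal $r$-separated set together with a Voronoi-type partition, which cleanly produces disjoint cells of controlled size and controlled neighborhood overlap; the overlap bound is the geometric heart of the matter.

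\textbf{Construction.} First I would fix $r>0$ and choose a maximal $r$-separated subset $\{a_j\}_{j=1}^\infty \subset \mathbb{C}^n$, i.e.\ a set with $d(a_i,a_j)\ge r$ for $i\ne j$ which is maximal with this property (such a set exists by Zorn's lemma and is countable since $\mathbb{C}^n$ is separable). Maximality forces the balls $B(a_j,r)$ to cover $\mathbb{C}^n$. Next I would convert this cover into a genuine partition by disjointification: enumerate the centers and set
\[
   F_j = B(a_j, r) \setminus \bigcup_{i<j} B(a_i, r).
\]
Each $F_j$ is Borel, the $F_j$ are pairwise disjoint, and they exhaust $\mathbb{C}^n$ because the balls do. Since $F_j \subseteq B(a_j,r)$, we immediately get $\operatorname{diam} F_j \le 2r$, which is conclusion (2). (If one insists on every $F_j$ being nonempty one discards the empty ones; this is harmless.)

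\textbf{Bounded overlap.} The remaining task is conclusion (1): to bound the number of sets $G_j = \{z : d(z,F_j)\le r\}$ containing a fixed point $z$. Because $F_j\subseteq B(a_j,r)$, we have $G_j \subseteq \{z : d(z,a_j) \le 2r\} = \overline{B(a_j,2r)}$. Hence $z\in G_j$ implies $a_j \in \overline{B(z,2r)}$. Now I would invoke the separation of the centers: the balls $B(a_j, r/2)$ are pairwise disjoint (since $d(a_i,a_j)\ge r$), and for every $j$ with $a_j\in \overline{B(z,2r)}$ the ball $B(a_j,r/2)$ lies inside $\overline{B(z, 5r/2)}$. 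A volume comparison in $\mathbb{C}^n = \mathbb{R}^{2n}$ then gives
\[
   \#\{j : z\in G_j\} \;\le\; \#\{j : a_j \in \overline{B(z,2r)}\} \;\le\; \frac{V\!\left(\overline{B(z,5r/2)}\right)}{V\!\left(B(\cdot,r/2)\right)} \;=\; \frac{(5r/2)^{2n}}{(r/2)^{2n}} \;=\; 5^{2n},
\]
a bound \emph{independent of $r$}, so we may take $N = 5^{2n}$. This $r$-independence of $N$ is exactly what the statement demands, and it is the one subtle point: it works only because the separation radius, the cell radius, and the neighborhood radius are all taken proportional to the single scale $r$, so all the radii cancel in the volume ratio.

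\textbf{Main obstacle.} The construction itself is routine; the step requiring care is this uniformity of $N$ in $r$ and the bookkeeping to make $G_j$ (defined via $F_j$, not via the ball) fit inside $\overline{B(a_j,2r)}$. One must be careful that $G_j$ is built from $F_j$ rather than from $B(a_j,r)$, so the inclusion $F_j\subseteq B(a_j,r)$ must be used to pass from the abstract neighborhood to a concrete ball before the packing argument applies. Once that inclusion is in hand, the disjoint-balls packing estimate is standard Euclidean geometry. I would not expect any genuine analytic difficulty here, only the need to state the volume-ratio bound cleanly so that $N$ is visibly independent of $r$.
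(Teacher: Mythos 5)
Your proof is correct, but it takes a different route from the paper. The paper does not really prove the lemma at all: it cites the covering lemma from the earlier works of Isralowitz--Mitkovski--Wick and Bauer--Isralowitz, and then remarks that the covering can be chosen explicitly as a lattice partition, namely the cubes centered at the points of $\frac{2r}{\sqrt{n}}\mathbb{Z}^{2n}$ with half-open boundaries, which are disjoint by construction and have bounded overlap of their $r$-neighborhoods because the lattice spacing is proportional to $r$. Your construction instead starts from a maximal $r$-separated set and disjointifies the covering balls $B(a_j,r)$ greedily; the diameter bound is immediate from $F_j\subseteq B(a_j,r)$, and the overlap bound follows from the packing argument with the disjoint balls $B(a_j,r/2)$ inside $\overline{B(z,5r/2)}$, giving $N=5^{2n}$ independent of $r$ --- exactly the point that needs care, and you handle it correctly, including the step $G_j\subseteq\overline{B(a_j,2r)}$ that converts the neighborhood of $F_j$ into a ball before packing. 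What each approach buys: the lattice construction is fully explicit, needs no Zorn's lemma, and produces a genuine partition with no empty cells to discard; your net-based construction is more intrinsic and would work verbatim in any doubling metric measure space, not just $\mathbb{R}^{2n}$. Incidentally, your version is also slightly cleaner on conclusion (2): a cube of side $\frac{2r}{\sqrt{n}}$ in $\mathbb{R}^{2n}$ has diameter $2\sqrt{2}\,r$, so the paper's stated side length gives $\operatorname{diam} F_j\le 2\sqrt{2}\,r$ rather than $2r$ (harmless for all applications, but your balls satisfy the stated bound exactly).
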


Notice that, if $r>1$, we have some absolute constant $N>0$ such that
\begin{equation}
\label{e:3.1}
\sum_{j=1}^\infty \chi_{F_j^+}(w)\le  \sum_{j=1}^\infty \chi_{G_j}(w)\leq \sum_{j=1}^\infty \chi_{G_j^+}(w)\le   N\quad \forall w\in \mathbb{C}^n.
\end{equation}

This covering ${\mathcal F}_r$ can also be chosen in a simple way.  For example, let  $\{a_j\}$  be an enumeration of the lattice  $\frac{2r}{\sqrt{n}}{\mathbb{Z}}^{2n}$. And take $F_j$ to be the cube  with centers $a_j$, side-length $\frac{2r}{\sqrt{n}}$  and half of the boundary so  that $\cup_{j=1}^\infty F_j ={\mathbb{C}}^n$, $F_j\cap F_k=\emptyset$ if $j\neq k$.

\begin{prop}\label{p:3.2}
Let $0< p\leq 1$ and $T\in {\textrm{WL}}_p^\varphi$. Then for every  $\varepsilon>0$,
there is some $r >0$ sufficiently  large   such that,  for the covering $\{F_j\}_{j=1}^\infty$ and $\{G_j\}_{j=1}^\infty$ (associated to $r$)  from Lemma \ref{l:3.1}, we have
\begin{equation}\label{e:3.2}
\left \|T-P\left(\sum_{j=1}^\infty M_{\chi_{F_j}}
    TPM_{\chi_{G_j}}\right)\right\|_{F^p_\varphi\to F^p_\varphi} <\varepsilon.
\end{equation}
\end{prop}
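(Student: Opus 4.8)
The plan is to reduce the difference $T-S$, where $S=P\bigl(\sum_{j}M_{\chi_{F_j}}TPM_{\chi_{G_j}}\bigr)$, to the off-diagonal localization integrals \eqref{e:2.2}--\eqref{e:2.3}, which vanish as $r\to\infty$ precisely because $T\in\textrm{WL}_p^\varphi$. First I would exploit that $\{F_j\}$ is a partition, so $\sum_j\chi_{F_j}\equiv 1$; together with $Pf=f$ and $P(Tf)=Tf$ for holomorphic $f$ (note $Tf\in F^p_\varphi\subset F^2_\varphi$ since $p\le 1$), this gives $Tf=P\bigl(\sum_j M_{\chi_{F_j}}TPf\bigr)$. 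Splitting $f=\chi_{G_j}f+\chi_{G_j^c}f$ inside each summand, writing $G_j^c:=\mathbb{C}^n\setminus G_j$, and recognizing the $\chi_{G_j}$-part as $Sf$, I obtain the clean formula
$$(T-S)f=P\Bigl(\sum_{j}\chi_{F_j}\,h_j\Bigr),\qquad h_j:=TP\bigl(\chi_{G_j^c}f\bigr),$$
with each $h_j$ holomorphic. The whole game is then to bound the $F^p_\varphi$ operator norm of the right-hand side by a quantity decaying in $r$.

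Second, I would produce a two-sided pointwise estimate. For the outer projection I use the kernel bound $|K(z,\zeta)|e^{-\varphi(z)-\varphi(\zeta)}\lesssim e^{-\theta|z-\zeta|}$ from Lemma \ref{l:2.2} to get $|(T-S)f(z)|e^{-\varphi(z)}\lesssim\sum_j\int_{F_j}|h_j(\zeta)|e^{-\varphi(\zeta)}e^{-\theta|z-\zeta|}\,dV(\zeta)$. For $h_j$ itself, I would repeat the argument of Lemma \ref{l:2.5} essentially verbatim, but with the domain $\mathbb{C}^n$ replaced by $G_j^c$: since the modulus of $f(w)\langle Tk_w,k_\zeta\rangle$, suitably weighted, is the modulus of a function holomorphic in $w$, Lemma \ref{l:2.4} lets me pull the exponent $p$ inside the $w$-integral, yielding $|h_j(\zeta)e^{-\varphi(\zeta)}|^p\lesssim\int_{(G_j^c)^+}|f(w)e^{-\varphi(w)}|^p|\langle Tk_w,k_\zeta\rangle|^p\,dV(w)$.

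Third comes the passage to $L^p$ with $0<p\le 1$. Here I cannot treat the integral operators as bounded on $L^p$ directly; instead every ``summation of many pieces'' must be realized as a genuine series, so that subadditivity $(\sum a_i)^p\le\sum a_i^p$ is available. Concretely I would raise the displayed pointwise bound to the power $p$ (subadditivity in $j$), discretize the $\zeta$-integrals over the $1/4$-lattice $\mathcal L$, freeze $e^{-\theta|z-\zeta|}\simeq e^{-\theta|z-z_l|}$ on each cell, apply the sub-mean-value inequality of Lemma \ref{l:2.3} to the holomorphic $h_j$, integrate in $z$ using that $\int e^{-p\theta|z-z_l|}\,dV(z)$ is a finite constant, and invoke the bounded-overlap property \eqref{e:3.1} to reach $\|(T-S)f\|_{p,\varphi}^p\lesssim\sum_j\int_{F_j^{(r_2)}}|h_j(\zeta)e^{-\varphi(\zeta)}|^p\,dV(\zeta)$, where $F_j^{(r_2)}=\{\zeta:d(\zeta,F_j)<r_2\}$ is a neighborhood of $F_j$ of a fixed radius $r_2$ that is an absolute constant. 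Substituting the bound for $h_j$ from the second step and applying Fubini turns the estimate into a double integral in $(w,\zeta)$ weighted by $\sum_j\chi_{(G_j^c)^+}(w)\chi_{F_j^{(r_2)}}(\zeta)$.

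Finally I would run the geometric reduction. The sets $\{F_j^{(r_2)}\}$ have overlap bounded by a constant $N_2=N_2(n)$ independent of $r$; and whenever the indicator above is nonzero one has $d(\zeta,F_j)<r_2$ and $d(w,F_j)>r-1$, forcing $d(w,\zeta)>r':=r-1-r_2$ by the triangle inequality. Hence $\sum_j\chi_{(G_j^c)^+}(w)\chi_{F_j^{(r_2)}}(\zeta)\le N_2\,\chi_{B(w,r')^c}(\zeta)$, and the double integral collapses to $N_2\bigl(\sup_w\int_{B(w,r')^c}|\langle Tk_w,k_\zeta\rangle|^p\,dV(\zeta)\bigr)\|f\|_{p,\varphi}^p$. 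By \eqref{e:2.2} (with $s=p$) this supremum tends to $0$ as $r\to\infty$, so choosing $r$ large makes $\|T-S\|_{F^p_\varphi\to F^p_\varphi}<\varepsilon$, and density of $\mathcal D$ in $F^p_\varphi$ completes the argument. \textbf{The main obstacle} is precisely this $p<1$ step: one must arrange the order of summation so that it respects the $\zeta$-localization, i.e. sum over $j$ using the bounded overlap of the $F_j^{(r_2)}$, rather than first estimating the $\zeta$-integral uniformly by $\sup_w\int_{B(w,r')^c}(\cdots)$ — the latter would leave an unlocalized, divergent sum over $j$ in the $w$ variable.
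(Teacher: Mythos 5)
Your proposal is correct and is essentially the paper's own argument: the identical reduction of $T-P\bigl(\sum_j M_{\chi_{F_j}}TPM_{\chi_{G_j}}\bigr)$ to $P\bigl(\sum_j M_{\chi_{F_j}}TPM_{\chi_{G_j^c}}\bigr)$, the same use of Lemma \ref{l:2.4} to pull the exponent $p\le 1$ inside integrals whose integrands are moduli of holomorphic functions, the bounded-overlap property \eqref{e:3.1}, Fubini, and the final appeal to \eqref{e:2.2} with $s=p$. The only deviations are organizational: the paper exploits the containment $G_j^c\subset B(w,r-1)^c$ for $w\in F_j^+$ at the outset and handles the outer projection by a second application of Lemma \ref{l:2.4} (with weight $2\varphi$) together with estimate (3) of Lemma \ref{l:2.2}, whereas you carry the kernel decay of estimate (1) of Lemma \ref{l:2.2} through a lattice discretization (which in effect re-proves Lemma \ref{l:2.4} inline via Lemma \ref{l:2.3}) and postpone the geometry, via the triangle inequality, to the very end --- interchangeable implementations of the same estimate.
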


\begin{proof}
Let  $T\in \textrm {WL}_p^\varphi$ be given. For $\varepsilon>0$,    we have some  $r>0$ sufficiently large (we may assume $r>10$) such that
 \beqm
     \int_{B(z,\,  r-1)^c}  | \langle  Tk_{z}, k_w \rangle_{F^2_\varphi} |^p dV(w)<\varepsilon \  \textrm{ and } \  \int_{B(z,\,  r-1)^c}  | \langle k_{z}, Tk_w \rangle_{F^2_\varphi} |^p dV(w)<\varepsilon.
 \eqm
 Take $\{F_j\}_{j=1}^{\infty}$ and $\{G_j\}_{j=1}^{\infty}$ to be as in Lemma \ref{l:3.1} with  $r$. For $w\in F_j^+$ and $u\in G_j^c$ we  have $|u-w|>r-1$,  then $u \in B(w, r-1)^c$.
 That is $G_j^c\subset B(w, r-1)^c$ whenever $w\in F_j^+$. Hence, for $w\in F_j^+$,
  \beqm
          &&\left|  \left  ( TP M_{\chi_{G_j^c}}f \right)(w)\right|  =  \left|  \langle P M_{\chi_{ G_j ^c}}f, T^*K_w
                \rangle_{F^2_\varphi} \right|\\
                &&\verb#               # =  \left| \langle  M_{\chi_{ G_j ^c}}f, T^*K_w
                \rangle_{F^2_\varphi} \right|\\
     &&\verb#               #  \le   \int_{G_j^c }\left|f(u)\right| \left| \langle K_u, T^*K_w
                 \rangle_{F^2_\varphi}\right| e^{-2 \varphi(u)} dV(u)\\
      &&\verb#               #  \le   \int_{B(w,r-1)^c }\left|f(u)\right| \left| \langle K_u, T^*K_w
                 \rangle_{F^2_\varphi}\right| e^{-2 \varphi(u)} dV(u).
\eqm
  Set $S=
TP- \sum\limits_{j=1}^\infty M_{\chi_{F_j}}
    TPM_{\chi_{G_j}}  $.
 Then
                 \beqm
              &&\verb#   #|PSf(z)|^p\\
              &&\le \left(\int_{{\mathbb{C}}^n}\left |Sf(w)K(z,w)e^{-2\varphi(w)}\right|dV(w)\right)^p
                    \\
                  &&= \left(\int_{{\mathbb{C}}^n}\left |\sum_{j=1}^\infty  M_{\chi_{F_j}} TPM_{\chi_{G_j^{c}}} f(w)\right||K(z,w)|e^{-2\varphi(w)} dV(w)\right)^p
                  \\
                   &&= \left(\sum_{j=1}^\infty \int_{{\mathbb{C}}^n}\left | M_{\chi_{F_j}} TPM_{\chi_{G_j^{c}}} f(w)\right||K(z,w)|e^{-2\varphi(w)} dV(w)\right)^p
                  \\
                  &&\le \sum_{j=1}^\infty \left(\int_{F_j}\left | TPM_{\chi_{G_j^{c}}} f(w)\right||K(z,w)|e^{-2\varphi(w)} dV(w)\right)^p.
  \eqm
  Notice that $|K(z,w)|=|K(w,z)|$, applying Lemma \ref{l:2.4} twice to above, we get
   \beqm
              &&\verb#   #|PSf(z)|^p\\
                  &&\le C\sum_{j=1}^\infty \int_{F_j^+}\left | TPM_{\chi_{G_j^{c}}} f(w)\right|^p|K(w,z)|^pe^{-2p\varphi(w)} dV(w)
                    \\
                  && \le  C  \sum_{j=1}^\infty \int_{F_j^{+}}|K(w,z)|^pe^{-p\varphi(w)} \left(\int_{B(w, r-1)^c  }  \left| f(u)e^{-\varphi(u)} \langle  Tk_{u}, k_w \rangle_{F^2_\varphi} \right| dV(u)\right)^p dV(w)
                   \\
                  && \le  C  \sum_{j=1}^\infty \int_{F_j^{+}}|K(w,z)|^pe^{-p\varphi(w)}\left( \int_{B(w, r-2)^c  }  \left| f(u)e^{-\varphi(u)} \langle  Tk_{u}, k_w \rangle_{F^2_\varphi} \right|^p dV(u)\right) dV(w).
  \eqm
   By Fubini's Theorem and \eqref{e:3.1}, we get $\|PSf\|^p_{p,\varphi}$ is no more than
\beqm
  && \verb#   #C \sum_{j=1}^\infty \int_{\mathbb{C}^n}  \left| f(u)e^{-\varphi(u)}\right|^p\int_{F_j^{+}}\chi_{B(u, r-1)^c}(w)
   \left|\langle  Tk_{u}, k_w \rangle_{F^2_\varphi} \right|^p e^{-p\varphi(w)}
   \\
     &&\verb#   # \times \int_{\mathbb{C}^n} |K(w,z)|^pe^{-p\varphi(z)} dV(z)dV(w)dV(u)
   \\
     && \le CN  \int_{\mathbb{C}^n}  \left| f(u)e^{-\varphi(u)}\right|^p\left(\int_{B(u, r-1)^c}
   \left|\langle  Tk_{u}, k_w \rangle_{F^2_\varphi} \right|^p dV(w)\right)dV(u)
   \\
     && \le C \varepsilon \|f\|^p_{p, \varphi}.
  \eqm
The constants $C$ above are independent of $\varepsilon$. Notice that  $PTP=T$ on $F^p_\varphi$, so $PS= T-P\left(\sum\limits_{j=1}^\infty M_{\chi_{F_j}}
    TPM_{\chi_{G_j}}\right)$ 
    is well defined on $F^p_\varphi$ and
 the estimate \eqref{e:3.2} is proved under the restriction that $ T\in  \textrm{WL}_p^\varphi$.
 \end{proof}

\begin{lm}
\label{l:3.3}
Given  $0<p\leq 1$, there is  some constant $C$ such that for  all   bounded  linear operator $T$ on $F^p_\varphi$ and $\{F_j\}_{j=1}^{\infty}$,  $\{G_j\}_{j=1}^{\infty}$ associated to $r>1$  as in Lemma \ref{l:3.1} and each positive integer $m$, we have
 \begin{equation}
 \label{e:3.3}
     \limsup_{m\to \infty} \|P T_m\|_{F^p_\varphi\to F^p_\varphi} \le C \limsup_{m\to \infty} \sup_{w\in  \cup_{j>m} G^+_j}
    \|Tk_w \|_{p, \varphi},
 \end{equation}
where  $T_m= \sum\limits_{j>m}M_{\chi_{F_j}} T PM_{\chi_{G_j}}$.
\end{lm}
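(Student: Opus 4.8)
The plan is to run the same machinery as in the proof of Proposition \ref{p:3.2}, now keeping track of $\|Tk_u\|_{p,\varphi}$ on the far pieces instead of a localization tail. Fix $r>1$ with the associated $\{F_j\}$, $\{G_j\}$, and take $f\in F^p_\varphi$. Since the $F_j$ are pairwise disjoint, at most one term of $T_m f(w)=\sum_{j>m}\chi_{F_j}(w)\,TP(\chi_{G_j}f)(w)$ survives at each $w$; so writing $PT_m f(z)=\int_{\mathbb{C}^n}T_mf(w)K(z,w)e^{-2\varphi(w)}dV(w)$, using $(u+v)^p\le u^p+v^p$, applying Lemma \ref{l:2.4} in the variable $w$ to the holomorphic function $w\mapsto TP(\chi_{G_j}f)(w)K(w,z)$ (with the admissible weight $2\varphi$, which is legitimate since $dd^c(2\varphi)\simeq\omega_0$), and then integrating in $z$ together with part (3) of Lemma \ref{l:2.2} (giving $\int_{\mathbb{C}^n}|K(w,z)|^pe^{-p\varphi(z)}dV(z)\simeq e^{p\varphi(w)}$), I would first reduce, exactly as in Proposition \ref{p:3.2}, to
\[
\|PT_m f\|_{p,\varphi}^p\le C\sum_{j>m}\int_{F_j^+}\big|TP(\chi_{G_j}f)(w)\big|^p e^{-p\varphi(w)}\,dV(w).
\]

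Next I would estimate each inner integral so that $\|Tk_u\|_{p,\varphi}$ appears. Representing $TP(\chi_{G_j}f)(w)=\langle\chi_{G_j}f,T^\ast K_w\rangle_{F^2_\varphi}=\int_{G_j}f(u)\langle TK_u,K_w\rangle_{F^2_\varphi}e^{-2\varphi(u)}dV(u)$ (as in Proposition \ref{p:3.2}) and using part (3) of Lemma \ref{l:2.2}, I get $|TP(\chi_{G_j}f)(w)|\le C\int_{G_j}|f(u)\,\langle TK_u,K_w\rangle_{F^2_\varphi}|\,e^{-2\varphi(u)}dV(u)$. The crucial device is the \emph{modulus trick}: because $u\mapsto TK_u$ is conjugate holomorphic, the function $u\mapsto\overline{(TK_u)(w)}$ is holomorphic, and $|f(u)\langle TK_u,K_w\rangle_{F^2_\varphi}|=|f(u)\,\overline{(TK_u)(w)}|$ is the weighted modulus of a function holomorphic in $u$. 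Applying Lemma \ref{l:2.4} once more (in $u$, weight $2\varphi$, over $\Omega=G_j$) and converting back via $|\langle TK_u,K_w\rangle_{F^2_\varphi}|\,e^{-\varphi(u)-\varphi(w)}\simeq|\langle Tk_u,k_w\rangle_{F^2_\varphi}|$ yields
\[
\big|TP(\chi_{G_j}f)(w)\big|^p e^{-p\varphi(w)}\le C\int_{G_j^+}\big|f(u)e^{-\varphi(u)}\big|^p\big|\langle Tk_u,k_w\rangle_{F^2_\varphi}\big|^p\,dV(u).
\]

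Finally I would substitute this into the reduction, apply Fubini, and use $\int_{F_j^+}|\langle Tk_u,k_w\rangle_{F^2_\varphi}|^p dV(w)\le\int_{\mathbb{C}^n}|\langle Tk_u,k_w\rangle_{F^2_\varphi}|^p dV(w)\simeq\|Tk_u\|_{p,\varphi}^p$, together with $\|Tk_u\|_{p,\varphi}\le\sup_{v\in\cup_{j>m}G_j^+}\|Tk_v\|_{p,\varphi}$ for $u\in G_j^+$, $j>m$, and the overlap estimate \eqref{e:3.1}, $\sum_j\chi_{G_j^+}\le N$. This gives $\|PT_m f\|_{p,\varphi}^p\le CN\big(\sup_{v\in\cup_{j>m}G_j^+}\|Tk_v\|_{p,\varphi}\big)^p\|f\|_{p,\varphi}^p$, hence $\|PT_m\|_{F^p_\varphi\to F^p_\varphi}\le C\sup_{v\in\cup_{j>m}G_j^+}\|Tk_v\|_{p,\varphi}$ for each $m$, and taking $\limsup_{m\to\infty}$ yields \eqref{e:3.3}. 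I expect the only genuinely delicate point to be the failure of a continuous triangle inequality when $0<p<1$: one cannot bring $\big(\int_{G_j}\cdots dV(u)\big)^p$ inside the integral by subadditivity, and it is precisely the holomorphy of $u\mapsto f(u)\overline{(TK_u)(w)}$ (so that Lemma \ref{l:2.4} replaces Minkowski's inequality) that circumvents this. Secondary care is needed in the weight bookkeeping ($\varphi$ versus the admissible $2\varphi$) and in justifying the $\langle\,\cdot\,,T^\ast K_w\rangle_{F^2_\varphi}$ representation of $TP(\chi_{G_j}f)$, both handled as in Proposition \ref{p:3.2}.
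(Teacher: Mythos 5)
Your proposal is correct and follows essentially the same route as the paper's proof: both arguments rest on Lemma \ref{l:2.4} applied (with the admissible weight $2\varphi$) to products of holomorphic and conjugate-holomorphic factors in place of Minkowski's inequality, the kernel estimate \hyperlink{l:2.2e3}{(3)} of Lemma \ref{l:2.2}, Fubini, the bound $\int_{F_j^+}|\langle Tk_u,k_w\rangle_{F^2_\varphi}|^p\,dV(w)\le C\|Tk_u\|_{p,\varphi}^p$ for kernel points $u\in G_j^+$, and the overlap estimate \eqref{e:3.1}. The only difference is organizational: you dispatch the outer projection $P$ and the factor $K(z,w)$ first and then localize $TPM_{\chi_{G_j}}$, whereas the paper works with the normalized functions $g_j=P\bigl(\chi_{G_j}f/\|\chi_{G_j^+}f\|_{p,\varphi}\bigr)$ and estimates $\|P(M_{\chi_{F_j}}Tg_j)\|_{p,\varphi}$ in a single computation -- the resulting double integral over $F_j^+\times G_j^+$ is the same up to renaming variables.
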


\begin{proof}
First, we are going to show
\begin{equation}
\label{e:3.4}
       \sup_{f\in F^p_\varphi\backslash \{0\} } \left\| T P \left(      \fr {\chi_{G_j} f}{\|\chi_{G_j^+}  f \|_{p, \varphi}}
          \right )\right \| _{p, \varphi}  \le C  \sup_{w\in G_j^+}
                \|T k_w\|_{p, \varphi}.
\end{equation}
In fact, given $f\in F^p_\varphi$   not identically zero, set
$$
    g_j =  P \left(      \fr {\chi_{G_j} f}{\|\chi_{G_j^+}  f \|_{p, \varphi}} \right ).
$$
  Then
$$
    g_j(z)=\int_{G_j} \fr {f(w) K(z,w) e^{-2\varphi(w)} }
              { {\|\chi_{G_j^+}f
        \|_{p, \varphi} }}   dV(w).
$$
It is trivial to see that $g_j\in F^p_\varphi$ because of the compactness of $\overline{G_j}$.
Since $T$ is bounded on $F^p_\varphi$, then
$$
\left|T  (g_j)(z)\right|\leq \int_{G_j} \fr {|f (w)| |TK_w(z)| e^{-2\varphi(w)} }
              { {\|\chi_{G_j^+}f
        \|_{p, \varphi} }}   dV(w)
$$
Note that $T K_w$ is conjugate holomorphic respecting to $w$. From  Lemma \ref{l:2.4} we have
\beqm
    \verb#     #
    && \|T  (g_j)\|_{p, \varphi}^p
        \le    \int_{\mathbb{C}^n} \left(  \int_{G_j}\fr {|f (w)| \left|\overline{TK_w(z)}\right| e^{-2\varphi(w)}  } {\|\chi_{G_j^+}f
        \|_{p, \varphi} } dV(w)  \right )^p e^{-p \varphi(z)}  dV(z) \\
        &&  \verb#        #   \le C  \int_{\mathbb{C}^n}  \left(  \int_{G_j^+}\fr {\left|f(w )\right|^p\left|\overline{TK_w(z)}\right| ^p  e^{-2p\varphi(w)} } {\|\chi_{G_j^+}f
        \|_{p, \varphi}^p }  dV(w) \right)    e^{-p \varphi(z)}  dV(z) \\
    &&\verb#        # \le C \int_{G_j^+} \fr {\left|f(w)e^{-\varphi(w)}\right|^p   } {\|\chi_{G_j^+}f
        \|_{p, \varphi}^p  }  \left( \int_{\mathbb{C}^n} \left| T k_w(z) e^{-\varphi
        (z)}\right|^p
        dV(z)\right) dV(w)\\
     &&\verb#        # \le C  \sup_{w\in G_j^+}
                \|T k_w\|^p_{p, \varphi}  \int_{G_j^+} \fr {\left|f(w)e^{-\varphi(w)}\right|^p   dV(w)  } {\|\chi_{G_j^+}f
        \|_{p, \varphi}^p }  \\
     &&\verb#        #  = C   \sup_{w\in G_j^+}
                \|T k_w\|^p_{p, \varphi}.
                 \eqm
This gives \eqref{e:3.4}. To prove \eqref{e:3.3}, we have  from Lemma \ref{l:2.4} that
\beqm
   &&\verb#  # \left| P \left( \chi_{F_j}(\cdot) \int_{G_j}\fr { \langle f, k_w  \rangle_{F^2_\varphi}  (T k_w)(\cdot)  } {\|\chi_{G_j^+}f
        \|_{p, \varphi} } dV(w)  \right )(z) \right|^p\\
   &&\le \left| \int_{F_j}    K(z, u)  e^{- 2\varphi(u)} \int_{G_j}\fr { \langle f, k_w  \rangle_{F^2_\varphi}   (T k_w)(u) } {\|\chi_{G_j^+}f
        \|_{p, \varphi} } dV(w) dV(u) \right|^p   \\
   && \le C  \int _{F_j^+} |K(z, u)|^p  e^{- 2p\varphi(u)} \left|\int_{G_j}\fr { \langle f, k_w  \rangle_{F^2_\varphi}  (T k_w)(u) } {\|\chi_{G_j^+}f
        \|_{p, \varphi}^p } dV(w)\right|^p dV(u)\\
   && \le C  \int _{F_j^+} |K(z, u)|^p  e^{- 2p\varphi(u)} \left(\int_{G_j^+}\fr { |\langle f, k_w  \rangle_{F^2_\varphi} |^p | (T k_w)(u)|^p  } {\|\chi_{G_j^+}f
        \|_{p, \varphi}^p } dV(w)\right) dV(u).
   \eqm
Hence, integrating both sides and interchanging the order of integrations we obtain
\beqm
 &&  \verb#   # \left  \|P\left(M_{\chi_{F_j}} T g_j\right) \right \|_{p, \varphi}^p\\
  && \le C \int_{G_j^+}\fr { |\langle f, k_w  \rangle_{F^2_\varphi} |^p   } {\|\chi_{G_j^+}f
        \|_{p, \varphi}^p }\int _{F_j^+} | (T k_w)(u)|^p e^{- 2p\varphi(u)}\int_{\mathbb{C}^n} |K(z, u)|^p e^{-p\varphi(z)} dV(z) dV(u) dV(w)\\
  && \le C  \int_{G_j^+}\fr { |\langle f, k_w  \rangle_{F^2_\varphi} |^p   } {\|\chi_{G_j^+}f
        \|_{p, \varphi}^p }\left(\int _{F_j^+} | (T k_w)(u)|^p e^{-  p\varphi(u)} dV(u)\right)dV(w).
\eqm
This gives
\beqm
 \left  \|P\left(M_{\chi_{F_j}} T g_j\right) \right \|_{p, \varphi}^p
   \le C \left(\sup_{w\in G_j^+} \|Tk_w\|_{p, \varphi}^p\right)   \int_{G_j^+}\fr { |\langle f, k_w  \rangle_{F^2_\varphi} |^p   } {\|\chi_{G_j^+}f
        \|_{p, \varphi}^p } dV(w) =C  \sup_{w\in G_j^+} \|Tk_w\|_{p, \varphi}^p.
\eqm
Therefore, \eqref{e:3.1} yields
\beqm
    &&  \|P T_m f\|^p_{p, \varphi } \le \sum_{j>m} \|P M_{\chi_{F_j}} T PM_{\chi_{G_j}}f \|_{p, \varphi}^p   \\
    && \verb#         #  =\sum_{j>m} \| P\left(M_{\chi_{F_j}} T g_j\right)\|_{p, \varphi}^p \,  \|\chi_{G_j^+}f\|_{p, \varphi}^p \\
    && \verb#         # \le C    \sum_{j>m} \sup_{w\in G_j^+} \|Tk_w\|_{p, \varphi}^p \|\chi_{G_j^+}f\|^p_{p, \varphi}\\
    && \verb#         # \le C N\left( \sup_{w\in \cup_{j>m} G_j^+} \|Tk_w\|_{p, \varphi}^p \right)  \|f\|^p_{p, \varphi}.
\eqm
 From this, \eqref{e:3.3} follows.
\end{proof}


In the case of $1\leq p< \infty$,  the projection $P$ is bounded from $L^p_\varphi$ to $F^p_\varphi$, and so is $P M_{\chi_{E}}$ when $E\subset \mathbb{C}^n$ is measurable. But $P$ is not bounded on $L^p_\varphi$  if $0<p<1$. The following lemma,  Lemma \ref{l:3.4},  says   $P M_{\chi_{E}}$ is still bounded  on  $F^p_\varphi$.

\begin{lm}
\label{l:3.4}
Suppose $0<p\leq 1$. There exists some constant $C$ such that for any domain $E$ in $\mathbb{C}^n$ we have
$
\|P M_{\chi_{E}}\|_{F^p_\varphi\rightarrow F^p_\varphi}\leq C
$.
\end{lm}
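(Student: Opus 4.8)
The plan is to estimate $P(\chi_E f)$ pointwise and then integrate, exploiting holomorphicity to overcome the fact that $P$ is unbounded on $L^p_\varphi$ when $0<p<1$. First I would fix $f\in F^p_\varphi$ and write $P(\chi_E f)(z)=\int_E f(w)K(z,w)e^{-2\varphi(w)}\,dV(w)$, which is holomorphic in $z$. The crucial observation is that, since the reproducing kernel satisfies $K(z,w)=\overline{K(w,z)}$, the function $w\mapsto f(w)K(w,z)$ is holomorphic on $\mathbb{C}^n$ for each fixed $z$, and its modulus equals $|f(w)K(z,w)|$. This is what lets me bring the reverse inequality of Lemma \ref{l:2.4} into play in the variable $w$.

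Next I would take absolute values inside the integral, multiply by $e^{-\varphi(z)}$, raise to the power $p$, and apply Lemma \ref{l:2.4} in $w$ to the holomorphic function $F_z(w):=f(w)K(w,z)$. Because $2\varphi$ satisfies the same admissibility hypothesis $dd^c(2\varphi)\simeq\omega_0$ as $\varphi$ (with constants $2M_1,2M_2$), Lemma \ref{l:2.4} applies verbatim with $\varphi$ replaced by $2\varphi$ and $\Omega=E$, yielding
\[
\bigl|P(\chi_E f)(z)e^{-\varphi(z)}\bigr|^p\le C\,e^{-p\varphi(z)}\int_{E^+}\bigl|f(w)K(z,w)\bigr|^p e^{-2p\varphi(w)}\,dV(w).
\]
The content of this step is precisely that, for moduli of holomorphic functions, an $L^1$-average converts to an $L^p$-average at the cost of enlarging $E$ to $E^+$ and a constant depending only on $p,n$ — the phenomenon that fails for arbitrary $L^p$ functions and that rescues the argument in the range $0<p<1$.

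Finally I would integrate in $z$ over $\mathbb{C}^n$ and interchange the order of integration by Fubini (the integrand being nonnegative). The inner integral becomes $\int_{\mathbb{C}^n}|K(z,w)|^pe^{-p\varphi(z)}\,dV(z)=\|K(\cdot,w)\|_{p,\varphi}^p$, which by estimate \hyperlink{l:2.2e3}{(3)} of Lemma \ref{l:2.2} is $\simeq e^{p\varphi(w)}$. Substituting this bound cancels one factor of $e^{p\varphi(w)}$ against $e^{-2p\varphi(w)}$ and leaves
\[
\|P(\chi_E f)\|_{p,\varphi}^p\le C\int_{E^+}\bigl|f(w)e^{-\varphi(w)}\bigr|^p\,dV(w)\le C\|f\|_{p,\varphi}^p,
\]
with $C$ independent of $E$. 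Since $\mathcal{D}$ is dense in $F^p_\varphi$, this gives the uniform bound $\|PM_{\chi_E}\|_{F^p_\varphi\to F^p_\varphi}\le C$.

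I expect the only genuine subtlety to be the first step: one must keep the pair $f(w)K(w,z)$ together as a single holomorphic function rather than separating $f$ from the kernel, since neither $f(w)e^{-\varphi(w)}$ nor $K(z,w)e^{-\varphi(w)}$ is holomorphic and Lemma \ref{l:2.4} would then be inapplicable. Everything else is bookkeeping of the weights and a routine Fubini argument; the well-definedness and holomorphy of $P(\chi_E f)$ follow from the pointwise bound $|f(w)|e^{-\varphi(w)}\le C\|f\|_{p,\varphi}$ (a consequence of Lemma \ref{l:2.3}) together with the exponential off-diagonal decay in Lemma \ref{l:2.2}\hyperlink{l:2.2e1}{(1)}.
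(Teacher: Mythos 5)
Your proposal is correct and follows essentially the same route as the paper's own proof: both keep $f(w)K(w,z)$ together as a single holomorphic function of $w$, apply Lemma \ref{l:2.4} (with the weight $2\varphi$, which satisfies the same hypothesis $dd^c(2\varphi)\simeq\omega_0$) over $E$, and then conclude by Fubini together with estimate \hyperlink{l:2.2e3}{(3)} of Lemma \ref{l:2.2}. Your explicit remark about why the lemma applies with $2\varphi$ in place of $\varphi$ is a point the paper leaves implicit, but otherwise the two arguments coincide.
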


\begin{proof}
Suppose $E\in \mathbb{C}^n$ is a domain.   For $f\in F^p_\varphi$, we have $|f(w)K(z,w)|=|f(w)K(w,z)|$. Lemma \ref{l:2.4} and Lemma \ref{l:2.2}, estimate \hyperlink{l:2.2e3}{(3)} gives
 \beqm
&&  \left\|PM_{\chi_E}f\right\|_{p,\varphi}^p=\int_{\mathbb{C}^n}\left|\int_{E}f(w)K(z,w)e^{-2\varphi(w)}dV(w)\right|^pe^{-p\varphi(z)}dV(z)
\\
&&\verb#           #\leq C\int_{\mathbb{C}^n}\left(\int_{E^+}\left|f(w)K(w,z)e^{-2\varphi(w)}\right|^p dV(w)\right)e^{-p\varphi(z)}dV(z)
\\
&&\verb#           #= C\int_{E^+}\left|f(w)\right|^p e^{-2p\varphi(w)}\left(\int_{\mathbb{C}^n}\left|K(w,z)e^{-\varphi(z)}\right|^p dV(z)\right)dV(w).
\\
&&\verb#           #\leq C\|f\|_{p,\varphi}^{p}.
\eqm
\end{proof}


\begin{lm}
\label{l:3.5}
Suppose $0<p\leq 1$ and  $T\in {\mathcal K}(F^p_\varphi)$. Then
$$
 \lim\limits_{R\rightarrow \infty}\|P M_{\chi_{B(0,R)}} T-T\|_{F^p_\varphi\rightarrow F^p_\varphi}=0.
$$
\end{lm}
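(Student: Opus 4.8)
The plan is to combine the uniform boundedness furnished by Lemma \ref{l:3.4} with a pointwise convergence statement, and then to upgrade pointwise convergence to convergence in operator norm by exploiting the compactness of $T$. Write $S_R := P M_{\chi_{B(0,R)}}-\mathrm{Id}$, so that the assertion is $\|S_R T\|_{F^p_\varphi\to F^p_\varphi}\to 0$. First I would record that the operators $S_R$ are uniformly bounded: by Lemma \ref{l:3.4} we have $\|P M_{\chi_{B(0,R)}}\|_{F^p_\varphi\to F^p_\varphi}\le C$ with $C$ independent of $R$, and hence $\sup_R\|S_R\|_{F^p_\varphi\to F^p_\varphi}\le C'$ for some fixed $C'$.

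The key analytic step is the pointwise claim that $\|S_R g\|_{p,\varphi}\to 0$ as $R\to\infty$ for each fixed $g\in F^p_\varphi$. Since $g$ is holomorphic, the reproducing property gives $Pg=g$, so $S_R g = P(\chi_{B(0,R)}g)-Pg = -P M_{\chi_{B(0,R)^c}}g$; this is where holomorphy is essential, because $P$ itself is unbounded on $L^p_\varphi$ for $0<p<1$. I would then rerun the estimate in the proof of Lemma \ref{l:3.4} with $E=B(0,R)^c$ (noting $E^+\subset B(0,R-1)^c$), and use Lemma \ref{l:2.2} estimate \hyperlink{l:2.2e3}{(3)} in the form $\int_{\mathbb{C}^n}|K(w,z)e^{-\varphi(z)}|^p dV(z)\simeq e^{p\varphi(w)}$, to obtain
$$\|S_R g\|_{p,\varphi}^p\le C\int_{B(0,R-1)^c}\left|g(w)e^{-\varphi(w)}\right|^p dV(w).$$
The right-hand side is the tail of the convergent integral defining $\|g\|_{p,\varphi}^p$, so it tends to $0$ as $R\to\infty$.

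Finally I would upgrade this to uniform convergence on the relatively compact set $\mathcal Q=\overline{T(B)}$, where $B=\{f\in F^p_\varphi:\|f\|_{p,\varphi}\le 1\}$. Given $\varepsilon>0$, choose a finite $\eta$-net $g_1,\dots,g_m\in\mathcal Q$ in the metric $\rho(f,g)=\|f-g\|_{p,\varphi}^p$ with $(C')^p\eta<\varepsilon/2$. For $g\in\mathcal Q$ pick $g_i$ with $\|g-g_i\|_{p,\varphi}^p<\eta$; using linearity of $S_R$, the bound $\|S_R h\|_{p,\varphi}^p\le\|S_R\|_{F^p_\varphi\to F^p_\varphi}^p\|h\|_{p,\varphi}^p$, and the inequality $\|a+b\|_{p,\varphi}^p\le\|a\|_{p,\varphi}^p+\|b\|_{p,\varphi}^p$ valid for $0<p\le 1$, I get
$$\|S_R g\|_{p,\varphi}^p\le \|S_R\|_{F^p_\varphi\to F^p_\varphi}^p\,\|g-g_i\|_{p,\varphi}^p+\|S_R g_i\|_{p,\varphi}^p<\frac{\varepsilon}{2}+\max_{1\le i\le m}\|S_R g_i\|_{p,\varphi}^p.$$
Applying the pointwise step to the finitely many $g_i$ makes the last maximum $<\varepsilon/2$ for $R$ large, so $\sup_{g\in\mathcal Q}\|S_R g\|_{p,\varphi}^p<\varepsilon$ eventually. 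Since $Tf\in\mathcal Q$ for every $f\in B$, this gives $\|S_R T\|_{F^p_\varphi\to F^p_\varphi}^p\le\sup_{g\in\mathcal Q}\|S_R g\|_{p,\varphi}^p\to 0$, which is the claim.

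The main obstacle, and the only place real care is needed, is adapting the classical Banach-space argument ``uniform bound plus pointwise convergence on a compact set yields uniform convergence'' to the quasi-Banach regime $0<p<1$: because $\|\cdot\|_{p,\varphi}$ is only a quasi-norm, the entire net-approximation must be carried out at the level of $p$-th powers (equivalently in the genuine metric $\rho$), and the pointwise step must be routed through $Pg=g$ to compensate for the unboundedness of $P$ on $L^p_\varphi$.
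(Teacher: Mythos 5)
Your proof is correct, and its analytic core is the same computation the paper performs: using $Pg=g$ to write $\bigl(PM_{\chi_{B(0,R)}}-\mathrm{Id}\bigr)g=-PM_{\chi_{B(0,R)^c}}g$ on holomorphic functions, and then running the Lemma \ref{l:3.4}-type estimate (Lemma \ref{l:2.4}, Fubini, and estimate (3) of Lemma \ref{l:2.2}) to obtain $\|S_Rg\|_{p,\varphi}^p\le C\int_{|w|\ge R-1}\left|g(w)e^{-\varphi(w)}\right|^pдV(w)$ --- this is exactly the displayed chain of inequalities in the paper, where it is applied with $g=Tf$. Where you genuinely diverge is in how the compactness of $T$ is converted into uniformity over the unit ball. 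The paper finishes in one stroke by citing \cite{HL14}*{Lemma 3.2}, which asserts that for a relatively compact subset of $F^p_\varphi$ the tail integrals $\int_{|w|>R}\left|g(w)e^{-\varphi(w)}\right|^p dV(w)$ are uniformly small; no uniform bound on the operators $S_R$ and no net argument are needed. You instead prove the required uniformity from scratch: uniform boundedness of $S_R$ from Lemma \ref{l:3.4}, pointwise convergence $\|S_Rg\|_{p,\varphi}\to 0$ for each fixed $g$, and a finite $\eta$-net in the compact set $\overline{T(B)}$, with the triangle inequality correctly replaced throughout by the $p$-th power inequality valid for $0<p\le 1$. What your route buys is self-containedness (no appeal to the external lemma) and an explicit quasi-Banach version of the classical principle that uniform boundedness plus pointwise convergence yields uniform convergence on compact sets; what the paper's route buys is brevity, since the cited lemma already packages the total-boundedness argument. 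The two uses of compactness are equivalent in substance, and your handling of the $0<p<1$ pitfalls (unboundedness of $P$ on $L^p_\varphi$, quasi-norm versus norm) is exactly right.

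(One typographical note: the stray non-LaTeX character in the display above should of course read $dV(w)$; the estimate itself is the one you stated.)
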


\begin{proof}
Notice that, $PT=T$ on $F^p_\varphi$.  For $f\in F^p_\varphi$ with  $\|f\|_{p,\varphi}\leq 1$, we get
 \beqm
&&  \left\|\left(PM_{\chi_{B(0,R)}}T-T\right)f\right\|_{p,\varphi}^p=\left\|\left(PM_{\chi_{B(0,R)}}T-PT\right)f\right\|_{p,\varphi}^p
\\
&&\verb#                     #=\int_{\mathbb{C}^n}\left|\int_{|w|\geq R}Tf(w)K(z,w)e^{-2\varphi(w)}dV(w)\right|^pe^{-p\varphi(z)}dV(z).
\eqm
Then by Lemma \ref{l:2.4},
 \beqm
&&  \left\|\left(PM_{\chi_{B(0,R)}}T-T\right)f\right\|_{p,\varphi}^p\leq C\int_{\mathbb{C}^n}\left(\int_{|w|\ge R-1}\left|Tf(w)K(w,z)e^{-2\varphi(w)}\right|^pdV(w)\right)e^{-p\varphi(z)}dV(z)
\\
&&\verb#                     #=\int_{|w|\ge R-1}\left|Tf(w)e^{-2\varphi(w)}\right|^p \left(\int_{\mathbb{C}^n}\left|K(w,z)\right|^pe^{-p\varphi(z)}dV(z) \right)dV(w) \\
&&\verb#                     #\leq C\int_{|w|\ge R-1}\left|Tf(w)e^{-\varphi(w)}\right|^p dV(w).
\eqm
Since $T\in {\mathcal K}(F^p_\varphi)$, $\left\{Tf: \ f\in F^p_\varphi \textrm{ with  } \|f\|_{p,\varphi}\leq 1\right\}\subset F^p_\varphi$ is relatively compact.  By \cite{HL14}*{Lemma 3.2}, for  each $\varepsilon >0$ there is some $R>0$ such that
$$
\sup\limits_{ f\in F^p_\varphi, \|f\|_{p,\varphi}\leq 1}\int_{|w|>R-1}\left|Tf(w)e^{-\varphi(w)}\right|^p dV(w)<\varepsilon^p.
$$
Therefore,
$$
\|P M_{\chi_{B(0,R)}} T-T\|_{F^p_\varphi\rightarrow F^p_\varphi}=\sup\limits_{ f\in F^p_\varphi, \|f\|_{p,\varphi}\leq 1} \left\|\left(PM_{\chi_{B(0,R)}}T-T\right)f\right\|_{p,\varphi}<C\varepsilon,
$$
where $C$ is independent of $\varepsilon$.
\end{proof}

\begin{lm}
\label{l:3.6}
Suppose $0<p\leq 1$. Then for $T$   bounded on $F^p_\varphi$ we have
$$
\|T\|_{e, F^p_\varphi}\simeq \limsup\limits_{R\rightarrow\infty}\|PM_{\chi_{B(0,R)^c}}T\|_{F^p_\varphi\rightarrow F^p_\varphi}.
$$
\end{lm}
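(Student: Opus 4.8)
The plan is to exploit the identity $PT=T$ on $F^p_\varphi$ (recall $P|_{F^p_\varphi}=\textrm{Id}$), which gives, for every $R>0$, the decomposition
$$
T = PM_{\chi_{B(0,R)}}T + PM_{\chi_{B(0,R)^c}}T \quad\text{on } F^p_\varphi,
$$
both pieces being bounded on $F^p_\varphi$ by Lemma \ref{l:3.4} (note that $B(0,R)$ and $B(0,R)^c$ are domains). The two inequalities encoded in $\simeq$ are then established separately. Throughout I use that, for $0<p\le1$, the operator norm $\|\cdot\|_{F^p_\varphi\to F^p_\varphi}$ is a $p$-norm, that is $\|S_1+S_2\|^p\le\|S_1\|^p+\|S_2\|^p$ (a consequence of the subadditivity of $\|\cdot\|_{p,\varphi}^p$), and that it is submultiplicative.

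For the bound $\|T\|_{e, F^p_\varphi}\le\limsup_{R\to\infty}\|PM_{\chi_{B(0,R)^c}}T\|_{F^p_\varphi\to F^p_\varphi}$, the crucial step is to show that $PM_{\chi_{B(0,R)}}$ is a \emph{compact} operator on $F^p_\varphi$ for each fixed $R$; composing with the bounded $T$ then makes $PM_{\chi_{B(0,R)}}T$ compact, and the decomposition yields, for every $R$,
$$
\|T\|_{e, F^p_\varphi}\le\|T-PM_{\chi_{B(0,R)}}T\|_{F^p_\varphi\to F^p_\varphi}=\|PM_{\chi_{B(0,R)^c}}T\|_{F^p_\varphi\to F^p_\varphi}.
$$
To see the compactness of $PM_{\chi_{B(0,R)}}$, I would take $f$ in the unit ball of $F^p_\varphi$, combine the pointwise estimate $|f(w)e^{-\varphi(w)}|\le C$ (from Lemma \ref{l:2.3}) with the off-diagonal decay $|K(z,w)|e^{-\varphi(z)}e^{-\varphi(w)}\le Ce^{-\theta|z-w|}$ (Lemma \ref{l:2.2}, estimate \hyperlink{l:2.2e1}{(1)}), and obtain, for $|z|\ge 2R$,
$$
\bigl|PM_{\chi_{B(0,R)}}f(z)\bigr|e^{-\varphi(z)}\le C\int_{B(0,R)}e^{-\theta|z-w|}\,dV(w)\le C_R\,e^{-\theta|z|/2}.
$$
This tail bound is uniform over the unit ball, so $\int_{|z|>\rho}|PM_{\chi_{B(0,R)}}f(z)e^{-\varphi(z)}|^p\,dV(z)\to0$ as $\rho\to\infty$ uniformly in $f$. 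Together with the boundedness from Lemma \ref{l:3.4}, the relative compactness criterion \cite{HL14}*{Lemma 3.2} shows that $\{PM_{\chi_{B(0,R)}}f:\|f\|_{p,\varphi}\le1\}$ is relatively compact, i.e. $PM_{\chi_{B(0,R)}}$ is compact.

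For the reverse inequality $\limsup_{R\to\infty}\|PM_{\chi_{B(0,R)^c}}T\|\le C\|T\|_{e, F^p_\varphi}$, I would fix an arbitrary $A\in\mathcal K(F^p_\varphi)$ and split $PM_{\chi_{B(0,R)^c}}T = PM_{\chi_{B(0,R)^c}}(T-A)+PM_{\chi_{B(0,R)^c}}A$. The first term is controlled by $\|PM_{\chi_{B(0,R)^c}}\|\,\|T-A\|\le C\|T-A\|$, with $C$ independent of $R$, by Lemma \ref{l:3.4} and submultiplicativity. For the second, $PA=A$ gives $PM_{\chi_{B(0,R)^c}}A=A-PM_{\chi_{B(0,R)}}A$, which tends to $0$ in operator norm as $R\to\infty$ by Lemma \ref{l:3.5}. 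Applying the $p$-norm inequality and letting $R\to\infty$ annihilates the second term, leaving $\limsup_{R\to\infty}\|PM_{\chi_{B(0,R)^c}}T\|^p\le C^p\|T-A\|^p$; taking the infimum over compact $A$ finishes the proof. The main obstacle is the compactness of $PM_{\chi_{B(0,R)}}$; the remainder is a routine combination of Lemmas \ref{l:3.4} and \ref{l:3.5} with the quasi-norm arithmetic.
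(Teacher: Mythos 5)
Your proposal is correct and follows essentially the same route as the paper: the upper bound for $\|T\|_{e,F^p_\varphi}$ comes from the compactness of $PM_{\chi_{B(0,R)}}T$ together with $T-PM_{\chi_{B(0,R)}}T=PM_{\chi_{B(0,R)^c}}T$, and the reverse bound from splitting off an arbitrary $A\in{\mathcal K}(F^p_\varphi)$ and combining Lemma \ref{l:3.4}, Lemma \ref{l:3.5} and the $p$-norm arithmetic. The only deviation is that the paper obtains the compactness of $PM_{\chi_{B(0,R)}}=T_{\chi_{B(0,R)}}$ by citing \cite{HL14}, whereas you prove it inline from the kernel decay of Lemma \ref{l:2.2} and the relative compactness criterion \cite{HL14}*{Lemma 3.2}; both are valid, and your version is merely more self-contained.
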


\begin{proof}
For any $R>0$, $PM_{\chi_{B(0,R)}}$  is a Toeplitz operator induced by $\chi_{B(0,R)}$, Lemma \ref{l:3.3} from \cite{HL14} tells us   it is compact on $F^p_\varphi$. Given
$T$   bounded on $F^p_\varphi$,    $PM_{\chi_{B(0,R)}}T$ is compact. Thus,
$$
\|T\|_{e, F^p_\varphi}\leq\|T-PM_{\chi_{B(0,R)}}T\|_{F^p_\varphi\rightarrow F^p_\varphi}.
$$
This yields
 $$
\|T\|_{e, F^p_\varphi}\leq  \limsup\limits_{R\rightarrow\infty}\|PM_{\chi_{B(0,R)^c}}T\|_{F^p_\varphi\rightarrow F^p_\varphi}.
$$
On the other hand, for any $A\in {\mathcal K}(F^p_\varphi)$, Lemma \ref{l:3.5} shows
$$
\limsup\limits_{R\rightarrow\infty}\|PM_{\chi_{B(0,R)^c}}A\|_{F^p_\varphi\rightarrow F^p_\varphi}=0.
$$
From Lemma \ref{l:3.4}, we know
\beqm
&&\limsup\limits_{R\rightarrow\infty}\|PM_{\chi_{B(0,R)^c}}T\|_{F^p_\varphi\rightarrow F^p_\varphi}=\limsup\limits_{R\rightarrow\infty}\|PM_{\chi_{B(0,R)^c}}(T-A)\|_{F^p_\varphi\rightarrow F^p_\varphi}
\\
&&\verb#                       #\leq \limsup\limits_{R\rightarrow\infty}\|PM_{\chi_{B(0,R)^c}}\|_{F^p_\varphi\rightarrow F^p_\varphi}
\|T-A\|_{F^p_\varphi\rightarrow F^p_\varphi}
\\
&&\verb#                       #\leq C\|T-A\|_{F^p_\varphi\rightarrow F^p_\varphi}.
\eqm
Hence,
$$
\limsup\limits_{R\rightarrow\infty}\|PM_{\chi_{B(0,R)^c}}T\|_{F^p_\varphi\rightarrow F^p_\varphi}\leq C\|T\|_{e, F^p_\varphi}.
$$
\end{proof}

Now we are in the position to characterize those compact operators in $\textrm{WL}_p^\varphi$ with $0<p\leq 1$, which extends the main results in \cites{Is13, IMW13, XZ13} to the small exponential case.

\begin{thm}
\label{t:3.7}
Let $0<p\leq 1$ and $T\in \textrm{WL}_p^\varphi$. The following  statements are equivalent:
\begin{enumerate}[{\normalfont (A)}]
\item $T\in {\mathcal K}(F^p_\varphi)$;

\item  $ \lim\limits_{z\to \infty} \sup\limits_{w\in B(z, r)}
   \left|\langle Tk_z, k_w \rangle_{F^2_\varphi}\right|= 0$ for any    $r>0$;

\item   $ \lim\limits_{z\to \infty} \sup\limits_{w\in \mathbb{C}^n}
   \left|\langle Tk_z, k_w \rangle_{F^2_\varphi}\right|= 0$;

\item  $ \lim\limits_{z\to \infty} \|T k_z\|_{p, \varphi} =0$.
\end{enumerate}
\end{thm}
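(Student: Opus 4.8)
The plan is to establish the cycle $(C)\Rightarrow(B)\Rightarrow(D)\Rightarrow(C)$ among the three ``off-diagonal kernel'' conditions first, and then to close the loop with $(A)\Leftrightarrow(D)$, which is where the localization machinery of this section enters. Throughout I will use the identification $|\langle Tk_z,k_w\rangle_{F^2_\varphi}|\simeq|(Tk_z)(w)|\,e^{-\varphi(w)}$, which follows from $\langle Tk_z,K_w\rangle_{F^2_\varphi}=(Tk_z)(w)$ together with $\|K_w\|_{2,\varphi}\simeq e^{\varphi(w)}$ (third estimate of Lemma \ref{l:2.2}); since $Tk_z\in F^\infty_\varphi$ is entire, the function $w\mapsto(Tk_z)(w)e^{-\varphi(w)}$ obeys the sub-mean-value inequality of Lemma \ref{l:2.3}. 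I will also repeatedly use $\|k_z\|_{p,\varphi}\simeq1$, again from Lemma \ref{l:2.2}.

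For the elementary equivalences, $(C)\Rightarrow(B)$ is immediate since $B(z,r)\subset\mathbb{C}^n$. For $(B)\Rightarrow(D)$ I write $\|Tk_z\|_{p,\varphi}^p\simeq\int_{\mathbb{C}^n}|\langle Tk_z,k_w\rangle_{F^2_\varphi}|^p\,dV(w)$ and split over $B(z,r)$ and $B(z,r)^c$: given $\varepsilon>0$, the localization tail \eqref{e:2.2} (with $s=p$, as $0<p\le1$) selects $r$ making the outer integral $<\varepsilon$ uniformly in $z$, while the inner integral is at most $V(B(0,r))\,\sup_{w\in B(z,r)}|\langle Tk_z,k_w\rangle_{F^2_\varphi}|^p$, which tends to $0$ as $z\to\infty$ by $(B)$. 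For $(D)\Rightarrow(C)$ the first estimate of Lemma \ref{l:2.3} gives $|\langle Tk_z,k_w\rangle_{F^2_\varphi}|^p\le C\int_{B(w,r_0)}|(Tk_z)(u)e^{-\varphi(u)}|^p\,dV(u)\le C\|Tk_z\|_{p,\varphi}^p$ uniformly in $w$, so taking $\sup_w$ and letting $z\to\infty$ yields $(C)$. This settles $(B)\Leftrightarrow(C)\Leftrightarrow(D)$.

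For $(D)\Rightarrow(A)$, fix $\varepsilon>0$ and use Proposition \ref{p:3.2} to choose $r$ (with its covering $\{F_j\},\{G_j\}$) so that $\|T-P\sum_j M_{\chi_{F_j}}TPM_{\chi_{G_j}}\|_{F^p_\varphi\to F^p_\varphi}<\varepsilon$. For each $m$ I split the sum: the finite part $\sum_{j\le m}PM_{\chi_{F_j}}TPM_{\chi_{G_j}}$ is compact, because each $PM_{\chi_{G_j}}$ is a Toeplitz operator with compactly supported symbol and hence compact (as recalled in the proof of Lemma \ref{l:3.6}), composed with the bounded operators $PM_{\chi_{F_j}}$ (Lemma \ref{l:3.4}) and $T$. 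Consequently $\|T\|_{e,F^p_\varphi}\le\varepsilon+\|PT_m\|_{F^p_\varphi\to F^p_\varphi}$ with $T_m=\sum_{j>m}M_{\chi_{F_j}}TPM_{\chi_{G_j}}$, and Lemma \ref{l:3.3} bounds $\limsup_m\|PT_m\|$ by $C\limsup_m\sup_{w\in\cup_{j>m}G_j^+}\|Tk_w\|_{p,\varphi}$, which vanishes under $(D)$ since $\cup_{j>m}G_j^+$ escapes to infinity. Letting $\varepsilon\to0$ gives $\|T\|_{e,F^p_\varphi}=0$, i.e.\ $T$ is compact. Conversely, for $(A)\Rightarrow(D)$ I use Lemma \ref{l:3.5} to pick $R$ with $\|T-PM_{\chi_{B(0,R)}}T\|<\varepsilon$, so the $p$-triangle inequality gives $\|Tk_z\|_{p,\varphi}^p\le C\varepsilon^p+\|PM_{\chi_{B(0,R)}}Tk_z\|_{p,\varphi}^p$; the estimate established inside the proof of Lemma \ref{l:3.4} bounds the last term by $C\int_{B(0,R+1)}|(Tk_z)(w)e^{-\varphi(w)}|^p\,dV(w)\simeq C\int_{B(0,R+1)}|\langle Tk_z,k_w\rangle_{F^2_\varphi}|^p\,dV(w)$, and for $|z|>R+1+r$ the set $B(0,R+1)$ lies in $B(z,r)^c$, so this is dominated by the weakly localized tail \eqref{e:2.2} and tends to $0$ as $z\to\infty$.

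The main obstacle is the direction $(D)\Rightarrow(A)$: the genuine work is the uniform control of the essential norm of the infinite tail $PT_m$, which is precisely what Lemma \ref{l:3.3} supplies, so the delicate point is the bookkeeping of two independent limits (first fixing $r$ for a given $\varepsilon$, then sending $m\to\infty$ along the fixed covering). A secondary subtlety, special to $0<p<1$, is that $P$ is not bounded on $L^p_\varphi$, so in $(A)\Rightarrow(D)$ one cannot simply dominate $\|PM_{\chi_{B(0,R)}}Tk_z\|_{p,\varphi}$ by the $L^p_\varphi$-norm of $\chi_{B(0,R)}Tk_z$; this is circumvented by invoking the holomorphic $L^p$-estimate from the proof of Lemma \ref{l:3.4} rather than any boundedness of $P$ on $L^p_\varphi$.
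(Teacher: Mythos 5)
Your proposal is correct, and four of its five implications coincide with the paper's proof: (C)$\Rightarrow$(B) is trivial, (B)$\Rightarrow$(D) splits $\|Tk_z\|_{p,\varphi}^p$ over $B(z,r)$ and $B(z,r)^c$ using the tail condition \eqref{e:2.2}, (D)$\Rightarrow$(C) uses the sub-mean-value estimate of Lemma \ref{l:2.3}, and (D)$\Rightarrow$(A) is exactly the paper's argument via Proposition \ref{p:3.2}, compactness of the finite part, and Lemma \ref{l:3.3}; your quantifier bookkeeping (fix $\varepsilon$, choose $r$, then let $m\to\infty$) matches the paper's.

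The one place you genuinely diverge is how you close the loop. The paper proves (A)$\Rightarrow$(B): it combines Lemma \ref{l:3.5} with a direct kernel-decay estimate (Lemma \ref{l:2.2}, estimate (1)) showing $\left|\langle PM_{\chi_{B(0,R)}}Tk_z,k_w\rangle_{F^2_\varphi}\right|\le Ce^{-\theta|w|}$ uniformly in $z$, so the supremum over $w\in B(z,r)$ dies as $z\to\infty$; notably this implication uses only compactness of $T$ and nothing about $\textrm{WL}_p^\varphi$. You instead prove (A)$\Rightarrow$(D): Lemma \ref{l:3.5} plus the estimate from the proof of Lemma \ref{l:3.4} to localize $\|PM_{\chi_{B(0,R)}}Tk_z\|_{p,\varphi}^p$ onto $B(0,R+1)$, and then, since $B(0,R+1)\subset B(z,r)^c$ for $|z|>R+1+r$, the weakly localized tail \eqref{e:2.2} to make that integral small. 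Your route is valid (the choices of $r$ and $R$ are independent, so the limits interleave correctly), but it \emph{must} invoke the hypothesis $T\in\textrm{WL}_p^\varphi$, and necessarily so: the rank-one example in the final section of the paper is compact on $F^p_\alpha$ yet has $\|Tk_{z_j}\|_{p,\alpha}=\|g\|_{p,\alpha}\not\to0$, so (A)$\Rightarrow$(D) is simply false without the localization hypothesis. What the paper's closing buys is the slightly stronger standalone fact that (B) holds for \emph{every} compact operator on $F^p_\varphi$; what yours buys is uniformity of method, staying entirely within norm estimates on $Tk_z$ and reusing Lemmas \ref{l:3.4} and \ref{l:3.5} in tandem. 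Your justification of compactness of the finite part (each $PM_{\chi_{G_j}}$ compact as a Toeplitz operator with bounded compactly supported symbol, composed with bounded operators) is also sound and in fact more detailed than the paper, which asserts it without comment.
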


\begin{proof}
It is trivial that (C)$\Rightarrow$(B). We will    show  the implication  (B)$\Rightarrow$(D)   under the hypothesis $T\in  {\textrm{WL}}_p^\varphi$.  In fact, for any $\varepsilon>0$, by  \eqref{e:2.2} we have some  $r>0$ such that
 $$
 \sup\limits_{z\in \mathbb{C}^n}\int_{B(z, r)^c} \left|\langle Tk_z, k_w \rangle_{F^2_\varphi}\right|^p  dV(w)<\varepsilon.
 $$
  Combining the above inequality
with (B), we get
  \beqm
      && \|Tk_z\|_{p, \varphi}^p  =\int_{\mathbb{C}^n}   \left|\langle Tk_z, k_w \rangle_{F^2_\varphi}\right|^p  dV(w) \\
      && \verb#       #= \left(\int_{B(z, r)^c}  + \int_{ B(z, r )}  \right)\left|\langle Tk_z, k_w \rangle_{F^2_\varphi}\right|^p  dV(w) \\
      && \verb#       #\le  \varepsilon +  A(B(z, r)) \sup_{w\in  B(z, r)}  \left|\langle Tk_z, k_w \rangle_{F^2_\varphi}\right|^p  \\
       && \verb#       #\le  \varepsilon +  Cr^{2n}\left( \sup_{w\in  B(z, r)}  \left|\langle Tk_z, k_w \rangle_{F^2_\varphi}\right|\right)^p \\
        && \verb#       # < 2 \varepsilon
  \eqm
whenever $|z|$ is sufficiently large.  Therefore,  (B) implies (D).

Suppose $T$ satisfies (D). By  Lemma \ref{l:2.3}  we know
\begin{equation}
\label{e:3.5}
  \left|\langle Tk_z, k_w \rangle_{F^2_\varphi}\right| =\left|Tk_z(w) e^{-\varphi(w)}\right| \le  C\left(\int_{B(w, 1)} \left|Tk_z(u) e ^{-\varphi(u)}\right|^p dV(u)\right)^{\fr 1p}\le C  \|Tk_z\|_{p, \varphi}.
\end{equation}
Then,
$$
    \sup _{w\in \mathbb{C}^n}
   \left|\langle Tk_z, k_w \rangle_{F^2_\varphi}\right| \le C   \|Tk_z\|_{p, \varphi}
$$
which gives the implication  (D)$\Rightarrow$(C).

To prove (D)$\Rightarrow$(A),  
given  $\varepsilon>0$  we  pick some $r>10 $ with sets $\{F_j\}_j$ and $\{G_j \}_j$
as in Proposition \ref{p:3.2} so that
$$
   \left\|T -P\left(\sum_{j=1}^\infty M_{\chi_{F_j}} T PM_{\chi_{G_j}}\right)\right \|_{F^p_\varphi\to F^p_\varphi}
    <\varepsilon.
$$
For each positive integer $m$, set $T_m= \sum\limits_{j>m}M_{\chi_{F_j}} T PM_{\chi_{G_j}}$. Since $ P\left(\sum\limits_{j=1}^m M_{\chi_{F_j}} T PM_{\chi_{G_j}}\right)$ is compact on $F^p_\varphi$, we get
 \begin{equation}
 \label{e:3.6}
      \|T\|^p_{e, F^p_\varphi} \le  \left\| T - P\left(\sum_{j=1}^m M_{\chi_{F_j}} T PM_{\chi_{G_j}}\right) \right\|^p_{F^p_\varphi\to
       F^p_\varphi} < \varepsilon^p+ \|PT_m\|^p_{F^p_\varphi\to F^p_\varphi}.
 \end{equation}
Suppose  $T$ satisfies  (D), then there exists $t>0$ such that $\|T k_z\|_{p, \varphi}<\varepsilon$ for $|z|\geq t$.
Notice that, $ \cup_{j>m} G^+_j\subset B(0, t)^c $ whenever $m$ is large enough.  So,
\eqref{e:3.3} in Lemma \ref{l:3.3} and \eqref{e:3.6}  imply
    $\|T\|_{e, F^p_\varphi} =0$ which gives the compactness of $T$.

 To finish our proof, we only need to  prove the implication (A)$\Rightarrow$(B).   Given $T\in {\mathcal K}(F^p_\varphi)$, Lemma \ref{l:3.5} tells us
\begin{equation}
\label{e:3.7}
 \lim\limits_{R\rightarrow \infty}\|P M_{\chi_{B(0,R)}} T-T\|_{F^p_\varphi\rightarrow F^p_\varphi}=0.
\end{equation}
First,  we claim that
\begin{equation}
\label{e:3.8}
\lim\limits_{z\to \infty} \sup\limits_{w\in B(z, r)}
   \left|\langle P M_{\chi_{B(0,R)}}Tk_z, k_w \rangle_{F^2_\varphi}\right|= 0.
\end{equation}
  In fact, Lemma \ref{l:3.4} shows $P M_{\chi_{B(0,R)}}Tk_z\in F^p_\varphi\subset F^2_\varphi$, we obtain
  \beqm
    &&  \left|\langle P M_{\chi_{B(0,R)}}Tk_z, k_w \rangle_{F^2_\varphi}\right|= \left|\langle  M_{\chi_{B(0,R)}}Tk_z, k_w \rangle_{F^2_\varphi}\right|\\
     &&\verb#                   # \leq \int_{B(0,R)}  \left| Tk_z(u) \overline{k_w(u)}\right|e^{-2\varphi(u)}dV(u) \\
    &&\verb#                   # \le \|Tk_z\|_{\infty, \varphi} \int_{B(0,R)}  \left| k_w(u)\right|e^{-\varphi(u)}dV(u)\\
      &&\verb#                   #  \leq C \|Tk_z\|_{p, \varphi}  \sup\limits_{u\in B(0,R)}  \left| k_w(u)\right|e^{-\varphi(u)}\\
       && \verb#                   #  \leq C\|T\|_{F^p_\varphi\rightarrow F^p_\varphi}\|k_z\|_{p, \varphi} e^{-\theta|w|}\\
        && \verb#                   # \leq C e^{-\theta|w|},
     \eqm
where the constants $C$ are independent of $z$ and $w$. Hence, \eqref{e:3.8} is true. Using \hyperlink{l:2.2e3}{(3)} in Lemma \ref{l:2.2}  and \eqref{e:3.7}  to get that
 \beqm
    && \left|\left\langle \left(T-P M_{\chi_{B(0,R)}}T\right)k_z, k_w\right \rangle_{F^2_\varphi}\right|\leq\left\| \left(T-P M_{\chi_{B(0,R)}}T\right)k_z\right\|_{\infty, \varphi}\|k_w\|_{1, \varphi}\\
     &&\verb#                          # \leq C \left\| \left(T-P M_{\chi_{B(0,R)}}T\right)k_z\right\|_{p, \varphi}
     \\
    &&\verb#                          #  \le C\|T-P M_{\chi_{B(0,R)}}T \|_{F^p_\varphi\rightarrow F^p_\varphi}\|k_z\|_{p, \varphi}
    \\
    &&\verb#                          #  \le C\|T-P M_{\chi_{B(0,R)}}T \|_{F^p_\varphi\rightarrow F^p_\varphi}  \rightarrow0
     \eqm
as $R\rightarrow\infty$. Combining the above
with \eqref{e:3.8}, we obtain
\beqm
      \sup\limits_{w\in B(z, r)}\left|\langle Tk_z, k_w \rangle_{F^2_\varphi}\right|\leq \sup\limits_{w\in B(z, r)}\left|\left\langle P M_{\chi_{B(0,R)}}Tk_z, k_w\right \rangle_{F^2_\varphi}\right|+ \sup\limits_{w\in B(z, r)}\left|\left\langle \left(T-P M_{\chi_{B(0,R)}}T\right)k_z, k_w\right \rangle_{F^2_\varphi}\right|
    \eqm
goes to $0$ as $z\rightarrow\infty$.
\end{proof}

If $1<p<\infty$, $k_z\to 0$ weakly   on $F^p_\varphi$, which  implies $\lim\limits_{z\to \infty} \|T(k_z)\|_{p, \varphi} =0$ for $T\in {\mathcal K}(F^p_\varphi)$. Theorem 1.2 in \cite{IMW13} tells us that the equivalence from (A) to (D) remains true for $T\in \textrm{WL}_p^\varphi$ if $1<p<\infty$. For  our later applications, we exhibit the following
result.

\begin{thm}
\label{t:3.8}
Let $0<p< \infty$ and $T\in  \textrm{WL}_p^\varphi$. The following  statements are equivalent:
\begin{enumerate}[{\normalfont (A)}]

\item  $T\in {\mathcal K}(F^p_\varphi)$;

\item  $ \lim\limits_{z\to \infty} \sup\limits_{w\in B(z, r)}\left|\langle Tk_z, k_w \rangle_{F^2_\varphi}\right|= 0$ for any $r>0$;

\item  $ \lim\limits_{z\to \infty} \sup\limits_{w\in \mathbb{C}^n}\left|\langle Tk_z, k_w \rangle_{F^2_\varphi}\right|= 0$;

\item $ \lim\limits_{z\to \infty} \|T k_z\|_{p, \varphi} =0$.
\end{enumerate}
\end{thm}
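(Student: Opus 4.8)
The plan is to reduce the statement to results already in hand by splitting the exponent range at $p=1$. For $0<p\le 1$ there is nothing new to do: the four conditions (A)--(D) are exactly those of Theorem \ref{t:3.7}, whose proof already establishes their equivalence. Thus the entire content lies in the range $1<p<\infty$, and I would treat that case on its own.

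For $1<p<\infty$ I would first dispose of the equivalences (B)$\Leftrightarrow$(C)$\Leftrightarrow$(D), which do not really feel the exponent. The implication (C)$\Rightarrow$(B) is trivial, and (D)$\Rightarrow$(C) is the pointwise estimate $|\langle Tk_z,k_w\rangle_{F^2_\varphi}|=|Tk_z(w)e^{-\varphi(w)}|\le C\|Tk_z\|_{p,\varphi}$ coming from Lemma \ref{l:2.3}, valid for every $p$. For (B)$\Rightarrow$(D) I would split $\|Tk_z\|_{p,\varphi}^p=\int_{B(z,r)}+\int_{B(z,r)^c}$: on $B(z,r)$ the integral is bounded by $V(B(z,r))\sup_{w\in B(z,r)}|\langle Tk_z,k_w\rangle_{F^2_\varphi}|^p$, which vanishes as $z\to\infty$ by (B); on $B(z,r)^c$ I would use that for $p\ge 1$ the quantity $|\langle Tk_z,k_w\rangle_{F^2_\varphi}|$ is uniformly bounded (by Lemma \ref{l:2.3} together with $\|k_z\|_{p,\varphi}\simeq 1$ from Lemma \ref{l:2.2} and the boundedness of $T$), so that its $p$-th power is dominated by a constant times its first power, whose tail integral is controlled by the localization hypothesis \eqref{e:2.2} in the relevant case $s=1$.

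It then remains to link (A) to (D). The new feature when $1<p<\infty$ is that the normalized kernels satisfy $k_z\to 0$ weakly in $F^p_\varphi$; hence for any compact $T$ one obtains $\|Tk_z\|_{p,\varphi}\to 0$, which is (A)$\Rightarrow$(D) immediately. For the converse (D)$\Rightarrow$(A) I would invoke Theorem 1.2 of \cite{IMW13}, which already characterizes compactness of operators in $\textrm{WL}_p^\varphi=\textrm{WL}_1^\varphi$ for $1<p<\infty$ in precisely these terms; alternatively one repeats the approximation scheme of Theorem \ref{t:3.7}, which is in fact easier here since $P$ is bounded on $L^p_\varphi$. I do not anticipate a genuine obstacle: the only point demanding care is checking that the $L^1$-localization \eqref{e:2.2} with $s=1$ feeds correctly into the $L^p$ estimate for $\|Tk_z\|_{p,\varphi}$ in the step (B)$\Rightarrow$(D), which is exactly where the uniform bound on $|\langle Tk_z,k_w\rangle_{F^2_\varphi}|$ does the work.
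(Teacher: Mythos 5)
Your proposal is correct and takes essentially the same route as the paper: the paper likewise settles $0<p\le 1$ by appealing to Theorem \ref{t:3.7}, and settles $1<p<\infty$ by noting that $k_z\rightharpoonup 0$ weakly in $F^p_\varphi$ (giving (A)$\Rightarrow$(D) for compact $T$) and citing Theorem 1.2 of \cite{IMW13} for the remaining equivalences. Your extra direct verification of (B)$\Leftrightarrow$(C)$\Leftrightarrow$(D) when $p>1$ is sound but is subsumed in the paper by that citation.
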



From Theorem \ref{t:3.7}, it is natural to ask whether the essential norm of $T\in\textrm{WL}_p^\varphi$  can be dominated by its behavior on  normalized reproducing kernel $k_z$? This problem has attracted much interest, see \cite{Is13}*{Section 6} for example. Our Theorem \ref{t:3.9} says the answer is affirmative when $0<p\le 1$.

\begin{thm}
\label{t:3.9}
Suppose  $0<p\le 1$.  Then for  $T\in {\textrm{WL}}_p^\varphi$ we have
\begin{equation}
\label{e:3.9}
     \|T\|_{e, F^p_\varphi}
    \simeq \limsup_{z \to \infty} \|Tk_z\|_{p, \varphi}.
\end{equation}
\end{thm}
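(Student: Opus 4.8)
The plan is to establish the two bounds $\|T\|_{e,F^p_\varphi}^p\le C\limsup_{z\to\infty}\|Tk_z\|_{p,\varphi}^p$ and $\limsup_{z\to\infty}\|Tk_z\|_{p,\varphi}^p\le C\|T\|_{e,F^p_\varphi}^p$ separately. Throughout I will use that $F^p_\varphi$ is a $p$-Banach space, so the operator quasinorm is $p$-subadditive, $\|A+B\|^p\le\|A\|^p+\|B\|^p$, and that $\|k_z\|_{p,\varphi}\simeq 1$ by Lemma \ref{l:2.2}\hyperlink{l:2.2e3}{(3)}. The first inequality is a quantitative refinement of the implication (D)$\Rightarrow$(A) already carried out in the proof of Theorem \ref{t:3.7}, while the second is where the weak localization hypothesis does the real work.

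For the upper bound, fix $\varepsilon>0$ and apply Proposition \ref{p:3.2} to produce $r>10$ and the associated covering $\{F_j\}$, $\{G_j\}$ with $\|T-P(\sum_{j\ge1}M_{\chi_{F_j}}TPM_{\chi_{G_j}})\|_{F^p_\varphi\to F^p_\varphi}<\varepsilon$. Writing $T_m=\sum_{j>m}M_{\chi_{F_j}}TPM_{\chi_{G_j}}$ and recalling that $P(\sum_{j\le m}M_{\chi_{F_j}}TPM_{\chi_{G_j}})$ is compact, $p$-subadditivity gives, exactly as in \eqref{e:3.6}, the bound $\|T\|_{e,F^p_\varphi}^p\le\varepsilon^p+\|PT_m\|_{F^p_\varphi\to F^p_\varphi}^p$ for every $m$. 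I then raise \eqref{e:3.3} to the $p$-th power and combine it with the geometric fact (used already in Theorem \ref{t:3.7}) that there exist $R_m\to\infty$ with $\cup_{j>m}G_j^+\subset B(0,R_m)^c$; this yields $\limsup_{m\to\infty}\|PT_m\|^p\le C\limsup_{m\to\infty}\sup_{w\in\cup_{j>m}G_j^+}\|Tk_w\|_{p,\varphi}^p\le C\limsup_{z\to\infty}\|Tk_z\|_{p,\varphi}^p$. Taking the infimum over $m$ and then letting $\varepsilon\to0$ gives $\|T\|_{e,F^p_\varphi}^p\le C\limsup_{z\to\infty}\|Tk_z\|_{p,\varphi}^p$.

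For the lower bound I start from Lemma \ref{l:3.6}, which reduces matters to showing $\limsup_{z\to\infty}\|Tk_z\|_{p,\varphi}^p\le C\|PM_{\chi_{B(0,R)^c}}T\|^p$ for each fixed $R$. Since $Tk_z\in F^p_\varphi\subset F^2_\varphi$ and $P$ fixes $F^2_\varphi$, I decompose $Tk_z=PM_{\chi_{B(0,R)}}Tk_z+PM_{\chi_{B(0,R)^c}}Tk_z$. The tail term satisfies $\|PM_{\chi_{B(0,R)^c}}Tk_z\|_{p,\varphi}\le\|PM_{\chi_{B(0,R)^c}}T\|\,\|k_z\|_{p,\varphi}\le C\|PM_{\chi_{B(0,R)^c}}T\|$. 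For the compact-region term, the estimate inside the proof of Lemma \ref{l:3.4} (with $B(0,R)^+=B(0,R+1)$) gives $\|PM_{\chi_{B(0,R)}}Tk_z\|_{p,\varphi}^p\le C\int_{B(0,R+1)}|Tk_z(w)e^{-\varphi(w)}|^p\,dV(w)$, and since $|Tk_z(w)e^{-\varphi(w)}|\simeq|\langle Tk_z,k_w\rangle_{F^2_\varphi}|$ by Lemma \ref{l:2.2}\hyperlink{l:2.2e3}{(3)}, this is comparable to $\int_{B(0,R+1)}|\langle Tk_z,k_w\rangle_{F^2_\varphi}|^p\,dV(w)$. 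The crucial point is that once $|z|>R+1$ one has $B(0,R+1)\subset B(z,|z|-R-1)^c$, so this integral is dominated by $\sup_{z'}\int_{B(z',|z|-R-1)^c}|\langle Tk_{z'},k_w\rangle_{F^2_\varphi}|^p\,dV(w)$, which tends to $0$ as $|z|\to\infty$ by the tail condition \eqref{e:2.2}. Hence for each fixed $R$ the compact-region term vanishes in the limit, giving $\limsup_{z\to\infty}\|Tk_z\|_{p,\varphi}^p\le C\|PM_{\chi_{B(0,R)^c}}T\|^p$; letting $R\to\infty$ and applying Lemma \ref{l:3.6} completes this direction.

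The step I expect to be the main obstacle is precisely the vanishing of the compact-region term $\|PM_{\chi_{B(0,R)}}Tk_z\|_{p,\varphi}$ as $z\to\infty$. For $1<p<\infty$ one would simply use that $k_z\to0$ weakly together with the compactness of $PM_{\chi_{B(0,R)}}T$, but for $0<p\le1$ there is no usable duality and $k_z$ does not converge weakly, so that route is closed. The resolution is that the weakly localized hypothesis \eqref{e:2.2} supplies uniform-in-$z$ control of the off-diagonal mass of $\langle Tk_z,k_w\rangle_{F^2_\varphi}$, which forces the mass of $Tk_z$ over any fixed ball $B(0,R+1)$ to disappear as $z$ escapes to infinity. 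Keeping careful track that every triangle inequality is replaced by its $p$-subadditive analogue and that all constants are independent of $\varepsilon$, $R$, and $z$ then yields the claimed equivalence \eqref{e:3.9}.
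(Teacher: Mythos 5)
Your proof is correct, and both halves track the paper's proof at the level of overall structure, but your treatment of the key step in the lower bound is genuinely different. The upper bound is essentially the paper's argument: Proposition \ref{p:3.2}, the compactness of the finite sum, \eqref{e:3.6}, Lemma \ref{l:3.3}, and the observation that $\cup_{j>m}G_j^+$ escapes every fixed ball. (The paper shortcuts your final ``let $\varepsilon\to 0$'' step by choosing $\varepsilon=\varepsilon_1=\limsup_{z\to\infty}\|Tk_z\|_{p,\varphi}$ at the outset, so that the leftover $\varepsilon_1^p$ is absorbed into $C\limsup_{z\to\infty}\|Tk_z\|_{p,\varphi}^p$; both devices rest on the same fact, emphasized in the paper, that the constant in \eqref{e:3.3} does not depend on the covering.) The lower bound also shares the paper's skeleton, namely Lemma \ref{l:3.6} together with the decomposition $Tk_z=PM_{\chi_{B(0,R)}}Tk_z+PM_{\chi_{B(0,R)^c}}Tk_z$ and $p$-subadditivity, but you prove the crucial vanishing statement \eqref{e:3.10}, $\lim_{z\to\infty}\|PM_{\chi_{B(0,R)}}Tk_z\|_{p,\varphi}=0$, by a different route. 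The paper argues operator-theoretically: $PM_{\chi_{B(0,R)}}=T_{\chi_{B(0,R)}}$ is weakly localized, $\textrm{WL}_p^\varphi$ is an algebra, so $PM_{\chi_{B(0,R)}}T\in\textrm{WL}_p^\varphi$; this operator is compact (compact composed with bounded), and then the implication (A)$\Rightarrow$(D) of Theorem \ref{t:3.7} applies. You instead argue directly on kernels: the estimate inside Lemma \ref{l:3.4} gives $\|PM_{\chi_{B(0,R)}}Tk_z\|_{p,\varphi}^p\le C\int_{B(0,R+1)}\left|\langle Tk_z,k_w\rangle_{F^2_\varphi}\right|^p dV(w)$, and the inclusion $B(0,R+1)\subset B(z,|z|-R-1)^c$ for $|z|>R+1$ lets the tail condition \eqref{e:2.2} kill this integral as $z\to\infty$. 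Your route is more self-contained and slightly more economical in hypotheses: it consumes \eqref{e:2.2} only for $T$ itself (the same hypothesis already used in Proposition \ref{p:3.2}), whereas the paper's route requires knowing that the derived operator $PM_{\chi_{B(0,R)}}T$ is again in $\textrm{WL}_p^\varphi$ and satisfies the conclusions of Theorem \ref{t:3.7}. What the paper's route buys in exchange is brevity given the machinery already in place, and it displays \eqref{e:3.10} as an instance of the general compactness characterization rather than as an ad hoc estimate.
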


\begin{proof}
Suppose $T\in  {\textrm{WL}}_p^\varphi$.  From Lemma \ref{l:2.5} we know $T$ is bounded on $F^p_\varphi$ which implies  $\limsup\limits_{z\to \infty}
 \|Tk_z\|_{p, \varphi}<\infty$. By Theorem \ref{t:3.7}, $\|T\|_{e, F^p_\varphi}=0$   if $ \limsup\limits_{z \to \infty  } \|Tk_z\|_{p, \varphi}=0$. So, we  may assume
  $ \limsup\limits_{z \to \infty  } \|Tk_z\|_{p, \varphi}=\varepsilon_1 >0$.
    From Proposition \ref{p:3.2}, we have two sequences of sets
 $\{F_j\}_j$ and $\{G_j\}_j$ so that
 $$
     \left\| T-P\left(\sum_{j=1}^\infty M_{\chi_{F_j}} TP M_{\chi_{G_j}}\right) \right\|_{F^p_\varphi\to F^p_\varphi} <\varepsilon_1.
 $$
 Then, for $m=1, 2, \cdots$,  from \eqref{e:3.3} and \eqref{e:3.6} we have
\beqm
   \|T\|_{e, F^p_\varphi}
     \le \varepsilon_1 + \left\| PT_m \right\|_{F^p_\varphi\to F^p_\varphi}\le  \varepsilon_1 + C\sup_{z \in\cup_{j>m} G_j^+} \|Tk_z\|_{p, \varphi}.
   \eqm
Since $0<p\le 1$, Lemma \ref{l:3.3} tells us  that the constants $C$  above do not depend on  the precise choice of $\{F_j\}_j$ and $\{G_j\}_j$, and hence do not depend on $T$.   Let $m\to \infty$,  we have the desired estimate
$$
       \|T\|_{e, F^p_\varphi}^p
     \le \varepsilon_1^p +   C\limsup_{z\to \infty}  \|Tk_z\|^p_{p, \varphi}= C \limsup_{z\to \infty}  \|Tk_z\|^p_{p, \varphi}.
$$
On the other hand, fixed $R>0$, notice that $P M_{\chi_{B(0,R)}}$ is a Toeplitz operator induced by $\chi_{B(0,R)}$, which is a bounded function. So $P M_{\chi_{B(0,R)}}\in \textrm{WL}_p^\varphi$ and $P M_{\chi_{B(0,R)}}T\in \textrm{WL}_p^\varphi$, because $\textrm{WL}_p^\varphi$ is a algebra. Since $P M_{\chi_{B(0,R)}}$ is compact  and $T$ is bounded on $F^p_\varphi$ (see Lemma \ref{l:2.5}),  we get that $P M_{\chi_{B(0,R)}}T$ is compact on  $F^p_\varphi$.  Theorem \ref{t:3.7} tells us
 \begin{equation}
 \label{e:3.10}
   \lim\limits_{z\to \infty} \left\|P M_{\chi_{B(0,R)}}T k_z\right\|_{p, \varphi} =0.
 \end{equation}
  Therefore, Lemma \ref{l:3.6}, \eqref{e:3.10} and the fact that $PT=T$ yield
\beqm
&&\|T\|^p_{e, F^p_\varphi}\simeq \limsup\limits_{R\rightarrow\infty}\|PM_{\chi_{B(0,R)^c}}T\|^p_{F^p_\varphi\rightarrow F^p_\varphi}
\\&&\verb#      #\geq \limsup\limits_{R\rightarrow\infty}\limsup\limits_{z\to \infty} \left\|P M_{\chi_{B(0,R)^c}}T k_z\right\|^p_{p, \varphi}
\\
&&\verb#      #\geq\limsup\limits_{R\rightarrow\infty}\limsup\limits_{z\to \infty}\left(\|T k_z\|^p_{p, \varphi}-\left\|PM_{\chi_{B(0,R)}}T k_z\right\|^p_{p, \varphi}\right)
\\
&&\verb#      #\geq \limsup\limits_{z\to \infty}\|T k_z\|^p_{p, \varphi}.
\eqm
\end{proof}

\section{Toeplitz Operators with $\textrm{BMO}$ Symbols}
\label{s:three}

In this section,  we are going to discuss the characterizations on Toeplitz operators with $\textrm{BMO}$ symbols.
  First, we will characterize   the boundedness (and the   compactness) of Toeplitz operators $T_f$ on $F^p_{\varphi}$ with $\textrm{BMO}$ symbols   $f $.  Furthermore, we will  characterize  those compact operators on $F^p_{\varphi}$ which are in   the algebra generated by bounded Toeplitz operators with $\textrm{BMO}$ symbols. For this purpose, we  need  some more  auxiliary function spaces.

Fixed   $r>0$, recall that $B(\cdot,r)=\left\{w\in \mathbb{C}^n: |w-\cdot|<r\right\}$. Given a   locally Lebesgue integrable function $f$ on $\mathbb{C}^n$ (written as $f\in L^1_{loc}(\mathbb{C}^n)$), write
$$
\omega_{r}(f)(\cdot)=\sup\left\{|f(w)-f(\cdot)|:  w\in B(\cdot,r)\right\}
$$
and
$$
MO_{r}(f)(\cdot)=\frac{1}{V\left(B(\cdot,r)\right)}\int_{B(\cdot,r)}\left|f-\widehat{f}_{r}(\cdot)\right|dV $$
where
 $$
\widehat{f}_{r}(\cdot)=\frac{1}{V\left(B(\cdot,r)\right)}\int_{B(\cdot,r)}fdV.
$$
For $f$ on $\mathbb{C}^n$ with $f(\cdot)|k_{z}(\cdot)|^2\in L^{1}_\varphi$ for all $z\in\mathbb{C}^n$, the   Berezin transform of $f$  is defined as
$$
\widetilde{f}(z)= \int_{\mathbb{C}^n}f(w)\left|k_{z}(w)\right|^2e^
{-2\varphi(w)}dV(w).
$$
Let   $\textrm{BO}_{r}$ be the collection of all  continuous functions $f$ on $\mathbb{C}^n$ such that
$\omega_{r}(f)$
is bounded.
We use $\textrm{BA}_r$ and $\textrm{BMO}_r$ to denote respectively  the set of all  $f\in L^1_{loc}(\mathbb{C}^n)$ such that
$\widehat{|f|}_{r}$
and  $MO_{r}(f)$ are bounded on $\mathbb{C}^n$.
 The space $\textrm{BMO}$ is the family of all measurable function $f$ on $\mathbb{C}^n$ satisfying $f(\cdot)|k_{z}(\cdot)|^2\in L^{1}_\varphi$ for  $z\in\mathbb{C}^n$ and
$$
\|f\|_{\textrm{BMO}}=\sup\limits_{z\in\mathbb{C}^n}\int_{\mathbb{C}^n}\left|f(w)-\widetilde{f}(z)\right| |k_z(w)|^2e^{-2\varphi(w)}dV(w)<\infty.
$$

By Lemma 3.33 in \cite{Zh12}, we obtain that the spaces  $\textrm{BO}_{r}$ and $\textrm{BA}_r$  are independent of $r$,   they will be denoted as $\textrm{BO}$ and $\textrm{BA}$ below. The next lemma says $\textrm{BMO}_r$ is  independent of $r$ as well. \newline

\begin{lm}
\label{l:4.1}
Suppose $f\in L^1_{loc}(\mathbb{C}^n)$. The following three  statements are equivalent:
\begin{enumerate}[{\normalfont (A)}]
\item $f\in\textrm{BMO}_{r}$ for some (or any) $r>0$;

\item $f\in\textrm{BMO}$;

\item $f=f_1+f_2$, where $f_1\in \textrm{BA}$ and $f_2\in \textrm{BO}$.
\end{enumerate}
\end{lm}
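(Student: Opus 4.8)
The plan is to close the cycle (A) $\Rightarrow$ (C) $\Rightarrow$ (B) $\Rightarrow$ (A). Since statements (B) and (C) make no reference to $r$, while the cited Lemma~3.33 of \cite{Zh12} guarantees that $\textrm{BA}$ and $\textrm{BO}$ are $r$-independent, closing this cycle automatically delivers the ``for some (or any) $r$'' claim in (A): once (A) at one scale is shown equivalent to the scale-free condition (C), it is equivalent to (A) at every scale. I will throughout use the elementary \emph{factor-two} principle that, for any constant $c$,
\[
\int_{\mathbb{C}^n}\bigl|f(w)-\widetilde f(z)\bigr||k_z(w)|^2e^{-2\varphi(w)}\,dV(w)\le 2\int_{\mathbb{C}^n}\bigl|f(w)-c\bigr||k_z(w)|^2e^{-2\varphi(w)}\,dV(w),
\]
which holds because $\int|k_z|^2e^{-2\varphi}\,dV=1$ and $|\widetilde f(z)-c|\le\int|f-c||k_z|^2e^{-2\varphi}\,dV$; the same principle gives $MO_r(f)(z)\le \frac{2}{V(B(z,r))}\int_{B(z,r)}|f-c|\,dV$ with the optimal constant absorbed.

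For the two routine directions I would argue as follows. For (C)~$\Rightarrow$~(B), write $f=f_1+f_2$ with $f_1\in\textrm{BA}$, $f_2\in\textrm{BO}$, apply the factor-two principle with $c=f_2(z)$, and split the resulting integral. The $\textrm{BO}$-term $\int|f_2(w)-f_2(z)||k_z(w)|^2e^{-2\varphi(w)}\,dV(w)$ is controlled by combining the at-most-linear growth $|f_2(w)-f_2(z)|\lesssim 1+|w-z|$ of a $\textrm{BO}$ function with the off-diagonal decay $|k_z(w)|^2e^{-2\varphi(w)}\lesssim e^{-2\theta|z-w|}$ coming from estimate (1) of Lemma~\ref{l:2.2}; the $\textrm{BA}$-term $\int|f_1(w)||k_z(w)|^2e^{-2\varphi(w)}\,dV(w)$ is handled by the same decay together with a lattice-covering argument (as in \eqref{e:2.4}) that reduces the integral to a summable sequence of local $L^1$-averages $\widehat{|f_1|}_r$, which are bounded. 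These estimates also verify the integrability $f|k_z|^2\in L^1_\varphi$ required in the definition of $\textrm{BMO}$. For (B)~$\Rightarrow$~(A), I would fix $r_0$ to be the radius from estimate (2) of Lemma~\ref{l:2.2}, so that $|k_z(w)|^2e^{-2\varphi(w)}\simeq 1$ for $w\in B(z,r_0)$; then with $c=\widetilde f(z)$,
\[
MO_{r_0}(f)(z)\le\frac{2}{V(B(z,r_0))}\int_{B(z,r_0)}|f(w)-\widetilde f(z)|\,dV(w)\lesssim\int_{\mathbb{C}^n}|f(w)-\widetilde f(z)||k_z(w)|^2e^{-2\varphi(w)}\,dV(w)\le\|f\|_{\textrm{BMO}},
\]
so $MO_{r_0}(f)$ is bounded. (The converse feeder (C)~$\Rightarrow$~(A) at \emph{every} scale is immediate from subadditivity $MO_r(f)\le MO_r(f_1)+MO_r(f_2)\le 2\widehat{|f_1|}_r+2\omega_r(f_2)$ and the $r$-independence of $\textrm{BA}$, $\textrm{BO}$.)

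The heart of the matter is (A)~$\Rightarrow$~(C), and I expect the oscillation estimate there to be the main obstacle. Given $f\in\textrm{BMO}_r$, I would take the averaging function $f_2:=\widehat f_r$ and $f_1:=f-\widehat f_r$. Continuity of $f_2$ follows from continuity of translation in $L^1_{\mathrm{loc}}$. To see $f_2\in\textrm{BO}$ one must bound $\omega_r(\widehat f_r)$; here the clean device is the identity $\widehat f_r(w)-\widehat f_r(z)=\frac{1}{V(B(z,r))}\int_{B(w,r)}\bigl(f-\widehat f_r(z)\bigr)\,dV$ (the integral of $f-\widehat f_r(z)$ over $B(z,r)$ vanishing), estimated by an overlapping-ball chaining \emph{at the single scale $r$}: for $|z-w|<r$ the midpoint $m$ satisfies $|z-m|,|w-m|<r/2$, and since $B(z,r)\cap B(m,r)\supseteq B(z,r/2)$ (with volume $\simeq V(B(z,r))$), comparing averages across the common sub-ball gives $|\widehat f_r(z)-\widehat f_r(m)|\lesssim MO_r(f)(z)+MO_r(f)(m)$, and likewise for the $m$-to-$w$ step, whence $\omega_r(\widehat f_r)\lesssim\sup_u MO_r(f)(u)$. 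This avoids any appeal to a larger scale $2r$ and so sidesteps circular use of the very $r$-independence we are proving. Finally $f_1\in\textrm{BA}$ follows by averaging the triangle inequality $|f(w)-\widehat f_r(w)|\le|f(w)-\widehat f_r(z)|+|\widehat f_r(z)-\widehat f_r(w)|$ over $B(z,r)$, which yields $\widehat{|f_1|}_r(z)\le MO_r(f)(z)+\omega_r(\widehat f_r)(z)$, bounded by the previous step. Assembling the three implications completes the equivalence and, as noted, the $r$-independence of $\textrm{BMO}_r$.
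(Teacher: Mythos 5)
Your proposal is correct, but a direct comparison with ``the paper's proof'' is not quite possible: the paper gives no argument at all for Lemma \ref{l:4.1}, simply citing Theorem 3.34 of \cite{Zh12} (which treats $n=1$, $\varphi(z)=\frac{\alpha}{2}|z|^2$) and asserting that the proof ``can be carried out with a little modification'' in the general setting. What you have written is essentially that modification, carried out in full: the decomposition $f=(f-\widehat f_r)+\widehat f_r$ with $\widehat f_r\in\textrm{BO}$ and $f-\widehat f_r\in\textrm{BA}$ is exactly the device in Zhu's proof, and your replacement of the explicit Gaussian kernel identities by the estimates of Lemma \ref{l:2.2} (the exponential off-diagonal decay for (C)~$\Rightarrow$~(B), and the two-sided bound $|k_z(w)|^2e^{-2\varphi(w)}\simeq 1$ on $B(z,r_0)$ for (B)~$\Rightarrow$~(A)) is precisely what makes the argument work for general $\varphi$ with $dd^c\varphi\simeq\omega_0$, where no translation-invariance or Weyl operators are available. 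Two details in your write-up deserve a positive mention: the single-scale midpoint chaining for $\omega_r(\widehat f_r)$, which avoids the circular appeal to $\textrm{BMO}_{2r}$ that a careless adaptation would make, and the observation that integrability of $f|k_z|^2e^{-2\varphi}$ must be verified before the ``factor-two'' reduction is legitimate. (Minor simplification: for $|z-w|<r$ one can compare $\widehat f_r(z)$ and $\widehat f_r(w)$ directly across $B(\frac{z+w}{2},r/2)\subseteq B(z,r)\cap B(w,r)$, so the intermediate point $m$ is not needed; this changes nothing of substance.)
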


\begin{proof}
For $n=1$ and $\varphi(z)= \fr \alpha 2 |z|^2$, this is Theorem 3.34 from \cite{Zh12}. For general $n$ and $\varphi$ satisfying $ dd^c \varphi \simeq \omega_0$, the proof can be carried out as that of \cite{Zh12} with a little modification. The details will be omitted here.  \end{proof}

  For $f\in \textrm{BMO}$, say $f=f_1+f_2$ with $f_1\in \textrm{BA}$ and $f_2\in \textrm{BO}$,  similar to \cite{HL11}*{Lemma 4.1}, we know the Toeplitz operator $T_{f_1}$ is well defined on $F^p_{\varphi}$. From \cite{Zh12},  $\left|f_2(z)\right|\leq a|z|+b$ with constants $a,b>0$, $T_{f_2}$
is also well defined on $F^p_{\varphi}$. Thus, $T_f$ is well defined on $F^p_{\varphi}$, where $0<p< \infty$.
 Moreover,
 \begin{equation}
 \label{e:4.2}
 \langle T_fk_z, k_w\rangle=\int_{\mathbb{C}^n}k_z(u)\overline{k_w(u)}f(u)e^{-2\varphi(u)}dV(u).
\end{equation}

   Coburn, Isralowitz and Li \cite{CIL11} proved that $T_f$  ($f\in \textrm{BMO}$)
is compact on the classical Fock space $F^2_{1/2}$
if and only if the Berezin transform  $\widetilde{f}$ vanished at the
infinity. The first two authors extended this result to the setting of $F^p_{1/2}$ with $0<p<\infty$ in \cite{HL14}. Under
the assumption that $S$ is a linear combination of operators of form $T_{f_1}\cdots T_{f_m}$ with each function $f_j$ satisfying  $\widetilde{|f_j|}$   bounded,   Isralowitz proved that $S$ is compact on $F^2_{1/2}$ if and only if  $\widetilde{S}$ vanishes at the infinity, see \cite{Is11} for details.
 In all these    references, the Weyl unitary operators acting on $F^2_\alpha$ by  $W_zf(\cdot)=k_z f(\cdot-z)$ and the involutive unitary operators $U_zf(\cdot)=k_z f(z-\cdot)$
 play as a very crucial role. Unfortunately, there are not these kinds of unitary operators on our generalized Fock space $F^p_\varphi$.

  We will use ${\mathcal B}_p^{\varphi}$ to denote the collection of  all linear combination of  the form $T_{f_1} T_{f_2}\cdots T_{f_m}$, where  each  function $f_j\in \textrm{BMO}$ and $\widetilde{f_j}$ is bounded on $\mathbb{C}^n$. \newline

\begin{thm}
\label{t:4.2}
Let $0<p<\infty$.  \vspace{2mm}

\begin{enumerate}[{\normalfont (A)}]
\item If $f\in \textrm{BMO}$,  then $T_f$ is bounded on $F^p_\varphi$ if and only if $\widetilde{f}$ is bounded on $\mathbb{C}^n$;
 $T_f$ is compact on $F^p_\varphi$ if and only if
\begin{equation}
\label{e:4.3}
 \lim\limits_{z\to \infty} \sup\limits_{w\in B(z, r)}
   \left|\langle T_f k_z, k_w \rangle_{F^2_\varphi}\right|= 0\quad \forall r>0.
 \end{equation}

\item If $S\in {\mathcal B}_p^{\varphi}$, then $S$ is compact on $F^p_\varphi$  if and only if
\begin{equation}
\label{e:4.4}
\lim\limits_{z\to \infty} \sup\limits_{w\in B(z, r)} \left|\langle S k_z, k_w \rangle_{F^2_\varphi}\right|= 0\quad\forall r>0.
\end{equation}
\end{enumerate}
\end{thm}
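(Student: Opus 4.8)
The plan is to reduce both parts of the theorem to the single claim that a Toeplitz operator $T_f$ with $f\in\textrm{BMO}$ and $\widetilde f$ bounded belongs to $\textrm{WL}_p^\varphi$; once this is in hand, Theorem \ref{t:3.8} supplies every equivalence asked for. Indeed, if $S=\sum T_{f_1}\cdots T_{f_m}\in{\mathcal B}_p^\varphi$, then each factor lies in $\textrm{WL}_p^\varphi$ by the claim (each $\widetilde{f_j}$ being bounded by hypothesis), and since $\textrm{WL}_p^\varphi$ is an algebra we get $S\in\textrm{WL}_p^\varphi$; applying Theorem \ref{t:3.8} (equivalence of its statement (A) with its statement (B)) then yields exactly \eqref{e:4.4}. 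The same reasoning disposes of the compactness assertion of part (A): if $T_f$ is compact it is bounded, so $\widetilde f$ is bounded and $T_f\in\textrm{WL}_p^\varphi$, whence \eqref{e:4.3}; conversely \eqref{e:4.3} forces $\widetilde f(z)=\langle T_f k_z,k_z\rangle_{F^2_\varphi}\to 0$ as $z\to\infty$ (take $w=z$), so the continuous function $\widetilde f$ is bounded, $T_f\in\textrm{WL}_p^\varphi$, and Theorem \ref{t:3.8} gives compactness.

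The heart of the matter is therefore the uniform kernel decay
$$|\langle T_f k_z,k_w\rangle_{F^2_\varphi}|\le C e^{-\theta'|z-w|},\qquad z,w\in\mathbb{C}^n,$$
for some $\theta'>0$ and $C$ independent of $z,w$; this decay immediately yields \eqref{e:2.1}, \eqref{e:2.2}, \eqref{e:2.3} with $s=\min\{1,p\}$, because $\int_{B(z,r)^c}e^{-s\theta'|z-w|}\,dV(w)\to 0$ uniformly in $z$. Using Lemma \ref{l:4.1} I first write $f=f_1+f_2$ with $f_1\in\textrm{BA}$ and $f_2\in\textrm{BO}$, and invoke \eqref{e:4.2} together with the pointwise bound $|k_z(u)|e^{-\varphi(u)}\le Ce^{-\theta|z-u|}$ coming from Lemma \ref{l:2.2}, estimate \hyperlink{l:2.2e1}{(1)}. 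For the $\textrm{BA}$ piece this gives $|\langle T_{f_1}k_z,k_w\rangle_{F^2_\varphi}|\le C\int_{\mathbb{C}^n}|f_1(u)|e^{-\theta|z-u|}e^{-\theta|w-u|}\,dV(u)$, and I split the domain according to whether $|z-u|\ge|z-w|/2$ or $|w-u|\ge|z-w|/2$, extracting a factor $e^{-\theta|z-w|/4}$ in each case; the leftover integral $\int_{\mathbb{C}^n}|f_1(u)|e^{-\theta|w-u|}\,dV(u)$ (respectively with $z$) is bounded uniformly by $C\sup_v\widehat{|f_1|}_r(v)<\infty$ after decomposing $\mathbb{C}^n$ over a lattice and summing the averages against the summable weight $e^{-\theta|w-a_j|}$. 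This produces the claimed decay for $T_{f_1}$ with $\theta'=\theta/4$.

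It remains to treat $f_2\in\textrm{BO}$, and here the hypothesis that $\widetilde f$ is bounded enters decisively. Since $|\widetilde{f_1}|\le\widetilde{|f_1|}\simeq\widehat{|f_1|}_r$ is bounded (the comparison following from the kernel estimates of Lemma \ref{l:2.2}), the transform $\widetilde{f_2}=\widetilde f-\widetilde{f_1}$ is bounded; and for $f_2\in\textrm{BO}$ the bounded-oscillation estimate $|f_2(u)-f_2(z)|\le C(1+|u-z|)$ gives $|\widetilde{f_2}(z)-f_2(z)|\le C\int_{\mathbb{C}^n}(1+|u-z|)e^{-2\theta|z-u|}\,dV(u)\le C$, whence $f_2\in L^\infty$. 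With $f_2$ bounded, \eqref{e:4.2} and the same kernel bound give $|\langle T_{f_2}k_z,k_w\rangle_{F^2_\varphi}|\le\|f_2\|_\infty\int_{\mathbb{C}^n}e^{-\theta|z-u|}e^{-\theta|w-u|}\,dV(u)\le Ce^{-\theta|z-w|/2}$, by the convolution-of-exponentials estimate. Adding the two contributions establishes the decay for $T_f=T_{f_1}+T_{f_2}$ and hence $T_f\in\textrm{WL}_p^\varphi$, which completes the central claim.

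Finally, for the boundedness equivalence in part (A) the forward direction is immediate: $\widetilde f(z)=\langle T_f k_z,k_z\rangle_{F^2_\varphi}$, so $|\widetilde f(z)|\le\|T_f k_z\|_{\infty,\varphi}\|k_z\|_{1,\varphi}\le C\|T_f k_z\|_{p,\varphi}\le C\|T_f\|_{F^p_\varphi\to F^p_\varphi}$ by Lemma \ref{l:2.3} and Lemma \ref{l:2.2}, estimate \hyperlink{l:2.2e3}{(3)}; the reverse direction is the central claim combined with Lemma \ref{l:2.5} (and with the boundedness of $\textrm{WL}_1^\varphi$ on $F^p_\varphi$ for $p\ge 1$). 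I expect the main obstacle to be the $\textrm{BO}$ part, namely the rigorous passage from ``$\widetilde f$ bounded'' to ``$f_2$ bounded,'' since it leans on both the comparison $\widetilde{|f_1|}\simeq\widehat{|f_1|}_r$ and the near-equality of $\widetilde{f_2}$ with $f_2$; by contrast the $\textrm{BA}$ estimate and the final assembly are routine lattice and triangle-inequality computations.
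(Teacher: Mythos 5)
Your proposal is correct, and its overall architecture --- reduce everything to the claim that $f\in\textrm{BMO}$ with $\widetilde f$ bounded forces $T_f\in\textrm{WL}_p^\varphi$, then invoke the algebra property and the compactness characterization of Theorem \ref{t:3.8} --- is exactly the paper's. Where you genuinely diverge is in the proof of that central claim. The paper does it in one stroke: following \cite{CIL11}*{Lemma 1} it first proves the inequality \eqref{e:4.5}, $\sup_z\widetilde{|f|}(z)\le\|f\|_{\textrm{BMO}}+\sup_z|\widetilde f(z)|<\infty$, then concludes from \cite{HL14}*{Theorem 3.5} that $|f|\,dV$ is a Fock--Carleson measure, and dominates $\left|\langle T_fk_z,k_w\rangle_{F^2_\varphi}\right|$ by $C\sup_u\widehat{|f|}_r(u)\int e^{-\theta|z-u|-\theta|w-u|}\,dV(u)\le Ce^{-\theta|z-w|/2}$. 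You instead use the decomposition $f=f_1+f_2$ of Lemma \ref{l:4.1} (which the paper states but never actually uses in this proof), treat the $\textrm{BA}$ part by lattice sums, and reduce the $\textrm{BO}$ part to a bounded symbol via $|f_2(u)-f_2(z)|\le C(1+|u-z|)$ and $|f_2-\widetilde{f_2}|\le C$. Both routes are sound, and each buys something: the paper's is shorter and yields the by-product that $\widetilde{|f|}$ itself is bounded, i.e.\ $f\in\textrm{BT}$, which is precisely what gets recycled for Corollary \ref{c:4.3}, whereas your route does not give this directly; yours avoids the Carleson-measure machinery, and your uniform treatment of the compactness converse in part (A) via Theorem \ref{t:3.8} is cleaner than the paper's case split between $1<p<\infty$ (weak null convergence of $k_z$) and $0<p\le 1$. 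Two small points of care: your comparison $\widetilde{|f_1|}\simeq\widehat{|f_1|}_r$ holds pointwise only in the direction $\widehat{|f_1|}_r\lesssim\widetilde{|f_1|}$, the other direction only at the level of suprema (by the same lattice argument you describe), which is all you in fact need; and since the paper only sketches the proof of Lemma \ref{l:4.1} for general $\varphi$, your argument leans somewhat more heavily than the paper's on the $\textrm{BO}$/$\textrm{BA}$ structure theory in the generalized-weight setting.
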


\begin{proof}
We claim  $T_f\in \textrm{WL}_p^\varphi$ if $f\in \textrm{BMO}$ and $\widetilde{f}$ remains bounded. In fact,   similar to \cite{CIL11}*{Lemma 1}  it is trivial to verify
\begin{equation}
\label{e:4.5}
\sup\limits_{z\in \mathbb{C}^n}\widetilde{|f|}(z)\leq \|f\|_{\textrm{BMO}}+\sup\limits_{z\in \mathbb{C}^n}|\widetilde{f}(z)|<\infty.
\end{equation}
By \cite{HL14}*{Theorem 3.5},  $|f|dV$ is a Fock-Carleson measure. Hence,
  \beqm
 && \left|\langle T_f k_z, k_w\rangle_{F^2_\varphi}\right|\leq\int_{\mathbb{C}^n}\left|k_z(u)k_w(u)\right|e^{-2\varphi(u)}|f(u)|dV(u)
  \\
  &&\verb#           #\leq C\sup\limits_{u\in\mathbb{C}^n}\widehat{|f|}_r(u)\int_{\mathbb{C}^n}\left|k_z(u)k_w(u)\right|e^{-2\varphi(u)}dV(u)
  \\
  &&\verb#           #\leq C\sup\limits_{u\in\mathbb{C}^n}\widetilde{{|f|}}(u)\int_{\mathbb{C}^n}e^{-\theta|z-u|-\theta|w-u|}dV(u)
  \\
  &&\verb#           #\leq Ce^{-\frac{\theta}{2}|z-w|}.
\eqm
This implies  $T_f\in \textrm{WL}_p^\varphi$ for any $p\in (0, \infty)$ (and also, $T_f$ is strongly localized in the sense of Xia and Zheng, see \cite{XZ13}).

(A).  Suppose $f\in \textrm{BMO}$.  If $\widetilde{f}$ is bounded on $\mathbb{C}^n$,
then $T_f\in \textrm{WL}_p^\varphi$ which implies $T_f$ is bounded on $F^p_\varphi$ for any $p\in (0, \infty)$.  Conversely, the condition that $T_f$ is bounded implies $\widetilde{f}$ is bounded, which can be proved in a standard way with $\widetilde{f}(z)= \langle T_f k_z, k_z\rangle_{F^2_\varphi}$.

  Now we deal with the compactness of $T_f$. If \eqref{e:4.3} holds, then $\widetilde{f}(z)=\langle T_f k_z, k_z \rangle_{F^2_\varphi}$ is bounded, hence
 $T_f\in \textrm{WL}_p^\varphi$. Therefore, by \eqref{e:4.3} and Theorem \ref{t:3.7},  $T_f$ is compact  on $F^p_\varphi$ for all $0<p<\infty$. Conversely, if $T_f$ is compact  on $F^p_\varphi$ for some $0<p<\infty$.
 If $1<p<\infty$,  we have $\lim\limits_{z\to \infty}\|T_f k_z \|_{p, \varphi} = 0$ because $k_z$ tends to zero weakly, from which \eqref{e:4.3} follows for any $r>0$.   If $0<p\le 1$, we know $\widetilde{f}$ to be bounded.  Then, $T_f\in \textrm{WL}_p^\varphi$. Now the estimate \eqref{e:4.3} comes from Theorem \ref{t:3.7}.

(B) Since each $T_{f_j}\in \textrm{WL}_p^\varphi$,  we have  ${\mathcal B}_p^{\varphi}\subset\textrm{WL}_p^\varphi$ for  $0<p<\infty$. Now  the conclusion  follows from Theorem \ref{t:3.7}.
\end{proof}

 As shown by Isralowitz in  \cite{Is13}*{Proposition 1.5}, on the classical Fock space $F^p_\alpha$ the estimate \eqref{e:4.4} is equivalent to $\lim\limits_{z\rightarrow\infty}\widetilde{S}(z)=0$. Therefore, Theorem \ref{t:4.2} extends  \cites{CIL11, HL14}.

  As in \cite{Is11}, set $\textrm{BT}$ to be the collection of all measurable functions $f$ on $\mathbb{C}^n$ with $\widetilde{|f|}$ bounded. As shown in the proof of Theorem 4.2, $T_f\in {\textrm{WL}}_p^\varphi$ if $f\in \textrm{BT}$.   We have  Corollary \ref{c:4.3} at once.

\begin{cor}
\label{c:4.3}
Let $0<p<\infty$, and let $S$ be in  the family  all linear combination of  the form $T_{f_1} T_{f_2}\cdots T_{f_m}$, where  each  function $f_j\in$BT. Then $S$ is compact on $F^p_\varphi$ if and only if one of the following three statements holds:
\begin{enumerate}[{\normalfont (A)}]

\item $ \lim\limits_{z\to \infty} \sup\limits_{w\in B(z, r)}
   \left|\langle Tk_z, k_w \rangle\right|= 0$ for any    $r>0$;

 \item $ \lim\limits_{z\to \infty} \sup\limits_{w\in \mathbb{C}^n}
   \left|\langle Tk_z, k_w \rangle\right|= 0$;

\item  $ \lim\limits_{z\to \infty} \|T k_z\|_{p, \varphi} =0$.
\end{enumerate}
\end{cor}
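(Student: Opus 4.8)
The plan is to reduce everything to Theorem \ref{t:3.8} by showing that $S$ belongs to $\textrm{WL}_p^\varphi$. Once that membership is in hand, the three conditions (A), (B), (C) are precisely the conditions (B), (C), (D) of Theorem \ref{t:3.8}, each of which is there shown to be equivalent to the compactness of an operator in $\textrm{WL}_p^\varphi$; the corollary then follows at once. So the entire content of the proof is the verification that $S\in\textrm{WL}_p^\varphi$.

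To that end, I would first record that $T_f\in\textrm{WL}_p^\varphi$ whenever $f\in\textrm{BT}$. This is exactly the estimate already carried out in the proof of Theorem \ref{t:4.2}: the bound
$$\left|\langle T_f k_z, k_w\rangle_{F^2_\varphi}\right|\le C\,\sup_{u\in\mathbb{C}^n}\widetilde{|f|}(u)\int_{\mathbb{C}^n}e^{-\theta|z-u|-\theta|w-u|}\,dV(u)\le Ce^{-\frac{\theta}{2}|z-w|}$$
invokes only the boundedness of $\widetilde{|f|}$, which is the defining property of $\textrm{BT}$; the full $\textrm{BMO}$ hypothesis plays no role here. The resulting off-diagonal decay $e^{-\frac{\theta}{2}|z-w|}$ makes the verification of \eqref{e:2.1}, \eqref{e:2.2} and \eqref{e:2.3} routine for every exponent $0<p<\infty$. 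Next I would use that $\textrm{WL}_p^\varphi$ is, by definition, the algebra generated by the weakly localized operators, hence closed under products and finite linear combinations. Since each factor $T_{f_j}$ lies in $\textrm{WL}_p^\varphi$, every product $T_{f_1}T_{f_2}\cdots T_{f_m}$ does as well, and therefore so does the finite linear combination $S$. With $S\in\textrm{WL}_p^\varphi$ established, Theorem \ref{t:3.8} applies verbatim and delivers the stated equivalences.

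I do not expect any genuine obstacle. The only point requiring a moment's care is the observation that the weak-localization estimate for $T_f$ persists under the weaker hypothesis $f\in\textrm{BT}$ rather than $f\in\textrm{BMO}$ with bounded Berezin transform; but this is transparent from the displayed computation, since that computation controls everything through $\widetilde{|f|}$ and nothing else. One should also note in passing that $T_f$ is well defined for $f\in\textrm{BT}$, because $|f|\,dV$ is then a Fock--Carleson measure by \cite{HL14}*{Theorem 3.5}, which guarantees that $T_f$ maps $\mathcal D$ into $F^\infty_\varphi$ and hence that the pairings $\langle Sk_z,k_w\rangle_{F^2_\varphi}$ appearing in (A)--(B) are meaningful.
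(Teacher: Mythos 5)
Your proposal is correct and is essentially the paper's own argument: the paper likewise notes that the off-diagonal decay estimate in the proof of Theorem \ref{t:4.2} depends only on the boundedness of $\widetilde{|f|}$, so that $T_f\in\textrm{WL}_p^\varphi$ for every $f\in\textrm{BT}$, and then obtains the corollary immediately from the algebra property of $\textrm{WL}_p^\varphi$ together with Theorem \ref{t:3.8}.
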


 While  $\varphi(z)= \fr 14 |z|^2$, $p=2$ and $S$ is a linear combination of operators of the form $T_{f_1} T_{f_2}\cdots T_{f_m}$ with each $f_j\in \textrm{BT}$, Corollary \ref{c:4.3} gives the main result of \cite{Is11}.

\section{Operators Satisfying Axler and Zheng's Condition}
 \label{s:four}

In this section, we will restrict ourselves to  the classical Fock space $F^p_\alpha$, that is  $\varphi(z)=\fr \alpha 2 |z|^2$ with $\alpha>0$. We are going to characterize the boundedness and compactness of linear operators with the Axler-Zheng condition  on $F^p_\alpha$.

Let  $\phi_z$  be  the holomorphic self-map of $\mathbb{C}^n$,  $\phi_z(\cdot)= z-\cdot $.
  $U_z$ is the operator on $F^p_\alpha$ defined by $U_z f= (f\circ \phi_z) k_{z}$.
   Given some linear operator $S$ on $F^p_\alpha$, define
   $$
   S_z= U_z S U_z^*.
   $$
In the context  of Bergman space $A^2(\mathbb{D})$  on the unit disc $\mathbb{D}$, with $\phi_z(w)= \fr {w-z}{ 1-\overline{z}w}$ and $U_z f= (f\circ \phi_z)\phi'(z)$,  Axler-Zheng introduced the condition
$$
\sup_{z\in \mathbb{D}} \|S_z1\|_{A^p }<\infty \textrm{ with some } p>2
$$
 in  \cite{AZ98}.  The  work
    in   \cites{CIL11, En99, MZ04, MW12, Zo03} also explored the condition $\|S_z 1\|_{A^p }\le C$. In the Fock space setting,   Wang, Cao, and Zhu  carried out related research  in \cite{WCZ12} to obtain that, if there exist some $p > 2$ such that
    $$
    \sup_{z\in \mathbb{C}^n} \|S_z 1\|_{p,\frac{2\alpha}{p}}<\infty \verb#  #
     ( \textrm{or } \|S_z 1\|_{p,\frac{2\alpha}{p}}\rightarrow 0
     \textrm {  as } z\rightarrow\infty),
 $$
 the operator $S$ is bounded (or compact) on $F^2_\alpha$.


\begin{thm}
\label{t:4.4}
Suppose $S$ is a linear operator defined on $\mathcal D$.  If there are some  $0<\sigma<p<\infty$ such that
\begin{equation}
\label{e:4.6}
 M= \sup_{z\in \mathbb{C}^n}  \int_{\mathbb{C}^n} |S_z1(u)|^{p} e^{-\fr {\alpha\sigma}2 |u|^2} dV(u)<\infty,
\end{equation}
  then
$$
\left| \langle Sk_{z}, k_{w}\rangle_{F^2_\alpha}    \right |   \le C M^{\fr 1{p}} e^{- \fr {\alpha(p-\sigma)} {2p}  |z-w|^2},
$$
so $S$ is bounded on $F^s_\alpha$ for all $0<s< \infty$. Furthermore, if both \eqref{e:4.6}  and
\begin{equation}
\label{e:4.7}
   \lim_{z\to \infty} \int_{\mathbb{C}^n} |S_z1(u)|^{p} e^{-\fr {\alpha\sigma}2 |u|^2} dV(u) =0
\end{equation}
hold, then $S$ is compact on $F^s_\alpha$ for  $0<s<\infty$.
\end{thm}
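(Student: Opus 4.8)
The plan is to reduce the whole statement to an off-diagonal Gaussian decay of the matrix entries $\langle Sk_z,k_w\rangle_{F^2_\alpha}$ and then feed that into the machinery of Section \ref{s:two}. The starting point is the algebraic identity coming from the operators $U_z$. For the classical Fock space $U_z$ is (up to a harmless normalization scalar that gets absorbed into the constant $C$) a self-adjoint unitary, so $U_z^*1=U_z1=k_z$, and a direct computation with the explicit reproducing kernel of $F^2_\alpha$ shows $U_zk_w=\lambda\,k_{z-w}$ for some unimodular $\lambda=\lambda(z,w)$. Using that $U_z$ preserves the pairing (justified by the change of variables $v=z-u$, the relevant integrals converging because $Sk_z\in F^\infty_\varphi$ is beaten by the Gaussian decay of $k_w$), one gets
\[
\langle Sk_z,k_w\rangle_{F^2_\alpha}=\langle SU_z^*1,k_w\rangle_{F^2_\alpha}=\bar\lambda\,\langle U_zSU_z^*1,k_{z-w}\rangle_{F^2_\alpha}=\bar\lambda\,\langle S_z1,k_{z-w}\rangle_{F^2_\alpha},
\]
so that $|\langle Sk_z,k_w\rangle_{F^2_\alpha}|\simeq|\langle S_z1,k_{z-w}\rangle_{F^2_\alpha}|$.

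Next I would estimate $\langle S_z1,k_{z-w}\rangle_{F^2_\alpha}$ by \emph{pointwise evaluation} rather than by Hölder against the kernel; this is the step that produces the sharp rate $\frac{\alpha(p-\sigma)}{2p}$. The function $S_z1=U_z(Sk_z)$ is entire, and \eqref{e:4.6} says precisely that $\int_{\mathbb{C}^n}\bigl|S_z1(u)e^{-\psi(u)}\bigr|^p\,dV(u)\le M$ with $\psi(u)=\frac{\sigma\alpha}{2p}|u|^2$. Since $dd^c\psi=\frac{\sigma\alpha}{2p}\,\omega_0\simeq\omega_0$, the weight $\psi$ is admissible, so Lemma \ref{l:2.3} applied to $\psi$ gives $|S_z1(\eta)|\le CM^{1/p}e^{\frac{\sigma\alpha}{2p}|\eta|^2}$ for every $\eta$. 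Because $\frac{\sigma\alpha}{p}<\alpha$, the $F^2_\alpha$ reproducing formula is valid on this growth space, whence $\langle S_z1,k_{z-w}\rangle_{F^2_\alpha}=(S_z1)(z-w)/\|K_{z-w}\|_{2,\alpha}$, and the third estimate of Lemma \ref{l:2.2} gives $\|K_{z-w}\|_{2,\alpha}\simeq e^{\frac{\alpha}{2}|z-w|^2}$. Taking $\eta=z-w$ and combining,
\[
|\langle Sk_z,k_w\rangle_{F^2_\alpha}|\le CM^{1/p}e^{\left(\frac{\sigma\alpha}{2p}-\frac{\alpha}{2}\right)|z-w|^2}=CM^{1/p}e^{-\frac{\alpha(p-\sigma)}{2p}|z-w|^2},
\]
which is the asserted kernel estimate.

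With this Gaussian decay boundedness on every $F^s_\alpha$ is immediate. Writing $\delta=\frac{\alpha(p-\sigma)}{2p}>0$, the bound is symmetric in $z,w$, and $|\langle k_z,Sk_w\rangle_{F^2_\alpha}|=|\langle Sk_w,k_z\rangle_{F^2_\alpha}|$ obeys the same estimate; since $\int_{\mathbb{C}^n}e^{-s\delta|z-w|^2}\,dV(w)$ is finite and translation invariant in $z$ and its tail over $B(z,r)^c$ tends to $0$ as $r\to\infty$ uniformly in $z$, all four conditions of Definition \ref{d:2.1} hold with exponent $\min\{1,s\}$. Thus $S\in\textrm{WL}_s^\alpha$ for every $0<s<\infty$, and boundedness on $F^s_\alpha$ follows from Lemma \ref{l:2.5} when $0<s\le1$ and from the $\textrm{WL}_1$ result of \cite{IMW13} when $s>1$.

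Finally, for compactness I would rerun the kernel estimate keeping the $z$-dependent quantity $M(z)=\int_{\mathbb{C}^n}|S_z1(u)|^pe^{-\frac{\sigma\alpha}{2}|u|^2}\,dV(u)$ in place of the uniform bound $M$, obtaining $|\langle Sk_z,k_w\rangle_{F^2_\alpha}|\le CM(z)^{1/p}e^{-\delta|z-w|^2}$. Condition \eqref{e:4.7} says $M(z)\to0$, so for any fixed $r>0$ we get $\sup_{w\in B(z,r)}|\langle Sk_z,k_w\rangle_{F^2_\alpha}|\le CM(z)^{1/p}\to0$ as $z\to\infty$, which is exactly statement (B) of Theorem \ref{t:3.8}; since $S\in\textrm{WL}_s^\alpha$, that theorem yields compactness of $S$ on $F^s_\alpha$ for all $0<s<\infty$. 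The only genuinely delicate points are the bookkeeping for the $U_z$-intertwining in the first step and the decision to evaluate $S_z1$ pointwise via Lemma \ref{l:2.3} rather than pairing it against the kernel by Hölder; the latter would deliver only the weaker rate $\frac{\alpha(p-\sigma)}{2(2p-\sigma)}$ and miss the stated exponent.
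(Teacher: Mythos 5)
Your proof is correct and takes essentially the same route as the paper's: both hinge on the identity $S_z1=U_z(Sk_z)$ (the paper writes it pointwise as $S_z1(u)=k_z(u)(Sk_z)(z-u)$ and changes variables), a pointwise evaluation via Lemma \ref{l:2.3} against the scaled Gaussian weight $e^{-\frac{\alpha\sigma}{2}|u|^2}$ to extract the decay $e^{-\frac{\alpha(p-\sigma)}{2p}|z-w|^2}$, and then weak localization together with Theorem \ref{t:3.8} (keeping the $z$-dependent quantity $M(z)$ for compactness). The only difference is packaging: the paper bounds the integral from below over $B(w,1)$ after the change of variables, whereas you phrase the same estimate through the unitary intertwining and the reproducing formula for $S_z1\in F^2_\alpha$.
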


\begin{proof}
Since $K(\cdot, \cdot)=e^{\alpha\langle \cdot, \cdot\rangle}$, it is easy to verify
$k_{z}(z-u) k_{z}(u)=1$ and $$ K(z-u, z-u)=K(z, z)K(u, u)|K(u, z)|^{-2}.$$ By the equality
   $ S_z1 (u)= k_{z}(u) (Sk_{z})(z-u)$ (see \cite{Zh12}) and Lemma \ref{l:2.3} we have
\beqm
\int_{\mathbb{C}^n} |S_z1(u)|^{p} e^{-\fr {\alpha\sigma}2 |u|^2} dV(u) & = & \int_{\mathbb{C}^n}  | k_{z}(u) (Sk_{z})(z-u)|^{p} e^{-\fr {\alpha\sigma}2 |u|^2} dV(u)\\
    & = &  \int_{\mathbb{C}^n} |k_{z}(z-u) (Sk_{z})( u)|^{p} e^{-\fr {\alpha\sigma}2 |u-z|^2} dV(u)\\
    & = &  \int_{\mathbb{C}^n} |  (Sk_{z})( u)|^{p} |k_{z}(u)|^{-p} e^{-\fr {\alpha\sigma}2 |u-z|^2}  dV(u)\\
    &\geq &   \int_{B(w, 1)} |  (Sk_{z})( u)|^{p} |k_{z}(u)|^{-(p-\sigma)}  e^{-\fr {\alpha\sigma}2 |u|^2} dV(u)\\
    & \geq &   C  |  (Sk_{z})(w)|^{p} |k_{z}(w)|^{-(p-\sigma)}  e^{-\fr {\alpha\sigma}2 |w|^2}\\
     & = &  C  | \langle  Sk_{z} , k_{w} \rangle _{F^2_\alpha}|^{p}e^{ \fr {\alpha (p-\sigma)} 2 |z- w|^2}.\verb#   #
\eqm
 From the above inequalities
and  \eqref{e:4.6},  we get
 $$
          | \langle  Sk_{z} , k_{w}\rangle_{F^2_\alpha}|\le C  M^{\fr 1{p}} e^{- \fr {\alpha(p-\sigma)}{ 2p}  |z- w|^2} .
$$
Since $p-\sigma>0$,   $S$ is weakly localized for $F^s_\alpha$, so
 $S$ is bounded on $F^s_\alpha$ for all $0<s<\infty$.

Furthermore, if both \eqref{e:4.6} and \eqref{e:4.7} are valid, from the proof above we have $ S\in \textrm {WL}_s^\alpha$. And also, for $p\in (0, \infty)$ there is some constant $C_r$ such that
$$
  \sup_{w\in B(z, r)}  \left | \langle Sk_{z}, k_{z}\rangle_{F^2_\alpha}\right| \le C_r \left( \int_{\mathbb{C}^n} |S_z1(u)|^{p} e^{-\fr {\alpha\sigma}2 |u|^2}
    dV(u)\right)^{\fr 1{p}}\to 0
$$
as
$ z \to \infty$.  By Theorem \ref{t:3.7} $S$ is compact on $F^s_\alpha$ for all $0<s<\infty$.
\end{proof}

\textbf{Remark. } If $p>\sigma=2$ and $s=2$, then Theorem \ref{t:4.4} reduces to Theorems A and B in \cite{WCZ12}.

\section{Further  Remarks }
\label{s:four}

An important theme in analysis on function spaces is to characterize when a given operator is compact.  In the setting of the Bergman space $A^p(\mathbb{B}_n)$ on the unit ball $\mathbb{B}_n$, $1<p<\infty$, in 2007 Su\'arez proved, see \cite{Su07}, that a bounded operator $S$ is compact if and only if $S$ is in the Toeplitz algebra and the Berezin transform of $S$ vanishes on the boundary. Later on, Mitkovski, Su\'arez and  the third author \cite{MSW13} extended \cite{Su07} to the weighted  Bergman space $A^p_{\alpha}(\mathbb{B}_n)$. On the classical Fock space $F^p_\alpha$ for $1<p<\infty$, in \cite{BI12} Bauer and Isralowitz showed that Su\'arez's characterization on compact operators is valid. For general $\varphi$ with $dd^c \varphi \simeq \omega_0$ and $1<p<\infty$, most recently in \cite{Is13} Isralowitz obtained ${\mathcal K}(F^p_\varphi)={\mathcal T}^p_\varphi (C^\infty_c(\mathbb{C}^n))$, which implies the results in \cite{BI12}.

For Toeplitz operators $T_\mu$ with positive Borel measures $\mu$ as symbols, the boundedness (or compactness) on $F^p_\varphi$ with $0<p\le 1$ can be characterized with the same condition as that on $F^q_\varphi$ with $q>1$. Unfortunately, some differences appear when we talk about the structure of ${\mathcal K}(F^p_\varphi)$. For example,  we find  $ {\mathcal K}(F^p_\varphi) \backslash {\mathcal T}^p_\varphi\neq \emptyset$ if $0<p\le 1$.
To see this, from \cite{Zh12}*{Lemma 4.39} (or \cite{MMO03}) we take a separated sequence $\{z_j\}_{j=1}^\infty $ which is an interpolating sequence for $F^\infty_\alpha$. Hence, we have some  $f\in F^\infty_\alpha$ such that
 \begin{equation}
 \label{e:5.1}
 f(z_k)e^{-\frac{\alpha}{2}|z_k|^2}=1,\verb#     #\forall k\in \mathbb{N}.
 \end{equation}
Although  \cite{Zh12} is only concerned with one variable interpolation,  take $\{z_j\}\subset \mathbb{C}$   and  $f\in H(\mathbb{C})$   satisfying  the interpolation above,   extend $f$ to $\mathbb{C}^n$ with the equation $f(z, z')= f(z)$ for $(z, z')\in {\mathbb{C}} \times {\mathbb{C}}^{n-1}$, we will have $f$ satisfying \eqref{e:5.1} in $\mathbb{C}^n$.
Furthermore, for $0<p\le 1$  take  $g\in F^p_\alpha$ so that $g(0)\neq 0$.  Define the operator $T$ on $F^p_\alpha$ as
\begin{equation}
\label{e:5.2}
T(\cdot)=\langle \cdot, f\rangle_{F^2_\alpha} g.
\end{equation}
$T$ is bounded and   of rank $1$, so $T$ is compact on $F^p_\alpha$. Also,
$$
 \left|\langle T k_{z_k}, k_w \rangle_{F^2_\alpha}\right|= \left|\overline{f(z_k)}e^{-\frac{\alpha}{2}|z_k|^2}g(w)e^{-\frac{\alpha}{2}|w|^2}\right|.
$$
 Because $\{z_j\}_{j=1}^\infty $ is separated, we have
$
        \lim\limits_{j\to \infty} z_j =\infty
$.
For each $r>0$,  as $k$ is large enough  we have from Lemma \ref{l:2.3} that
\beqm
&& \int_{B(z_k,r)^c} \left|\langle T k_{z_k}, k_w \rangle_{F^2_\alpha}\right|^p
  dV(w)=\int_{B(z_k,r)^c} \left|g(w)e^{-\frac{\alpha}{2}|w|^2}\right|^p
  dV(w)
  \\&&\verb#                      #\geq\int_{B( 0,1)} \left|g(w)e^{-\frac{\alpha}{2}|w|^2}\right|^p
  dV(w)
  \\&&\verb#                      #\geq C \left|g( 0)\right|^p.
\eqm
$T \notin \textrm{WL} _p^\alpha$ by Definition \ref{d:2.1}. Hence,  $ {\mathcal K}(F^p_\alpha) \backslash {\mathcal T}^p_\alpha\neq \emptyset$  for $0<p\le 1$. This tells us the characterization  of  compact operators $T$   on $F^p_\varphi$ with $0<p\le 1$ is quite different from that with $1<p<\infty$.   \newline

For $0<p\leq 1$,  $\{k_z: z\in \mathbb{C}^n\}$ does not converge weakly to zero in $F^p_\alpha$  as $z$ goes to $\infty$. In fact, take  $f \in F^\infty_\alpha$ satisfying \eqref{e:5.1},  since the dual space of $F^p_\alpha$ is $F^\infty_\alpha$ under the pairing $\langle g, f \rangle_{F^2_\alpha}$ (see \cite{Zh12}), we know that $F_f=\langle \cdot, f \rangle_{F^2_\alpha}$ is a bounded linear functional on $F^p_\alpha$. However,
$$
F_f(k_{z_k})=\langle k_{z_k},f \rangle_{F^2_\alpha}=1
$$
for all $k$.  \newline

 The operator $T$ defined as \eqref{e:5.2} also shows   $\lim\limits_{z\to \infty }\|Tk_z\|_{p, \alpha} \neq 0$, because  $ Tk_{z_j}  =  g $ for $j=1, 2, \cdots$, which says $T k_z $ need not converge to $0$ in $F^p_\alpha$ even if $T$  is compact while $0<p\le 1$. So, the hypothesis $T\in {\textrm WL}^\varphi_p$ both in Theorem \ref{t:3.7} and \ref{t:3.8} can not be removed.   But for the Berezin transform, we  have the following Proposition \ref{p:5.1}.

\begin{prop}
\label{p:5.1}
Suppose $0<p\le 1$ and $T\in {\mathcal K}(F^p_\varphi)$. Then  $\widetilde{T}(z)\rightarrow 0$ as $z\rightarrow\infty$.
\end{prop}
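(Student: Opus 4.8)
The plan is to exploit the fact that the Berezin transform lives on the diagonal, $\widetilde{T}(z)=\langle Tk_z,k_z\rangle_{F^2_\varphi}$, and to run the same truncation scheme that drives the implication (A)$\Rightarrow$(B) of Theorem \ref{t:3.7}. The crucial point — and the main obstacle — is that for $0<p\le 1$ the family $\{k_z\}$ does \emph{not} tend to zero weakly in $F^p_\varphi$ (as the example in the remarks above shows), so the classical argument ``$T$ compact kills weakly null sequences, hence $\langle Tk_z,k_z\rangle_{F^2_\varphi}\to 0$'' is unavailable. Instead I would extract the vanishing at infinity directly from the norm approximation of $T$ by the compact truncations $PM_{\chi_{B(0,R)}}T$ furnished by Lemma \ref{l:3.5}, which uses only compactness of $T$ (not membership in $\textrm{WL}_p^\varphi$), together with the off-diagonal decay of the Bergman kernel.

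Concretely, for $R>0$ I would split
$$
\widetilde{T}(z)=\langle PM_{\chi_{B(0,R)}}Tk_z,k_z\rangle_{F^2_\varphi}+\langle (T-PM_{\chi_{B(0,R)}}T)k_z,k_z\rangle_{F^2_\varphi}.
$$
For the error term I would bound $|\langle (T-PM_{\chi_{B(0,R)}}T)k_z,k_z\rangle_{F^2_\varphi}|\le \|(T-PM_{\chi_{B(0,R)}}T)k_z\|_{\infty,\varphi}\,\|k_z\|_{1,\varphi}$, then pass from the $\infty$-norm to the $p$-norm via Lemma \ref{l:2.3} and use boundedness of $T$ on $F^p_\varphi$; since $\|k_z\|_{1,\varphi}\simeq \|k_z\|_{p,\varphi}\simeq 1$ by Lemma \ref{l:2.2}, this term is at most $C\|T-PM_{\chi_{B(0,R)}}T\|_{F^p_\varphi\to F^p_\varphi}$ \emph{uniformly in $z$}, which by Lemma \ref{l:3.5} is $<\varepsilon$ once $R$ is large. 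For the local term, with $R$ now fixed, I would write it as $\int_{B(0,R)}Tk_z(u)\overline{k_z(u)}e^{-2\varphi(u)}\,dV(u)$ and estimate it by $\|Tk_z\|_{\infty,\varphi}\int_{B(0,R)}|k_z(u)|e^{-\varphi(u)}\,dV(u)$; the first factor is uniformly bounded (again Lemma \ref{l:2.3} plus boundedness of $T$), while the decay $|k_z(u)|e^{-\varphi(u)}\le C e^{-\theta|z-u|}$ from Lemma \ref{l:2.2} forces the integral over the fixed ball $B(0,R)$ to $0$ as $z\to\infty$. Choosing $R$ to kill the error term within $\varepsilon$ and then sending $z\to\infty$ to kill the local term yields $\widetilde{T}(z)\to 0$.

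In effect this reproves statement (B) of Theorem \ref{t:3.7} \emph{without} the hypothesis $T\in\textrm{WL}_p^\varphi$ and then specializes it to $w=z$; the weakly localized assumption only entered Theorem \ref{t:3.7} through the reverse implications (via Proposition \ref{p:3.2} and \eqref{e:2.2}), so it may be dropped here. The one thing to verify carefully is that the constants coming from Lemmas \ref{l:2.2} and \ref{l:2.3} are independent of $z$ and $R$ in the right way, so that the order of quantifiers — first fix $R$ large to control the error term uniformly in $z$, then let $z\to\infty$ for the local term — is legitimate.
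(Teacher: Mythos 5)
Your proposal is correct and is essentially identical to the paper's own proof: the paper uses exactly the same splitting $\widetilde{T}(z)=\langle PM_{\chi_{B(0,R)}}Tk_z,k_z\rangle_{F^2_\varphi}+\langle (T-PM_{\chi_{B(0,R)}}T)k_z,k_z\rangle_{F^2_\varphi}$, bounds the error term uniformly in $z$ by $C\|T-PM_{\chi_{B(0,R)}}T\|_{F^p_\varphi\to F^p_\varphi}$ via Lemmas \ref{l:2.3} and \ref{l:3.5}, and kills the local term with the kernel decay of Lemma \ref{l:2.2}. Your observation about the quantifier order (first $R$ large, then $z\to\infty$) and the fact that the weakly localized hypothesis is not needed here are both consistent with the paper's argument.
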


\begin{proof}
For  $R>0$ fixed,   $P M_{\chi_{B(0,R)}}Tk_z\in F^p_\varphi\subset F^2_\varphi$. Lemma \ref{l:2.2} estimate \hyperlink{l:2.2e1}{(1)} and Lemma \ref{l:2.2} estimate \hyperlink{l:2.2e3}{(3)} give
 \beqm
 \left|\langle P M_{\chi_{B(0,R)}}Tk_z, k_z \rangle_{F^2_\varphi}\right| & = & \left|\langle  M_{\chi_{B(0,R)}}Tk_z, k_z\rangle_{F^2_\varphi}\right|
      \leq  \int_{B(0,R)}  \left| Tk_z(u) \overline{k_z(u)}\right|e^{-2\varphi(u)}dV(u) \\
    &&\verb#                 # \le \|Tk_z\|_{\infty, \varphi} \int_{B(0,R)}  \left| k_z(u)\right|e^{-\varphi(u)}dV(u)\\
      &&\verb#                 #  \leq C \|Tk_z\|_{p, \varphi}  \sup\limits_{|u|\leq R}  \left| k_z (u)\right|e^{-\varphi(u)}\\
       && \verb#                 #  \leq\|T\|_{F^p_\varphi\rightarrow F^p_\varphi}\|k_z\|_{p, \varphi} e^{-\theta|z|}\\
        && \verb#                 # \leq C e^{-\theta|z|}\rightarrow0
     \eqm
 as $z\rightarrow\infty$. Since  $T\in {\mathcal K}(F^p_\varphi)$,  Lemma \ref{l:3.5} tells us
  \beqm
    && \left|\left\langle \left(T-P M_{\chi_{B(0,R)}}T\right)k_z, k_z\right \rangle_{F^2_\varphi}\right|\leq\left\| \left(T-P M_{\chi_{B(0,R)}}  T\right)k_z\right\|_{\infty, \varphi}\|k_z\|_{1, \varphi}\\
     &&\verb#                          # \leq C \left\| \left(T-P M_{\chi_{B(0,R)}}T\right)k_z\right\|_{p, \varphi}
     \\
    &&\verb#                          #  \le C\|T-P M_{\chi_{B(0,R)}}T \|_{F^p_\varphi\rightarrow F^p_\varphi}\|k_z\|_{p, \varphi} ,
    \\
    &&\verb#                          #  \le C\|T-P M_{\chi_{B(0,R)}}T \|_{F^p_\varphi\rightarrow F^p_\varphi}\rightarrow0
     \eqm
as $R\rightarrow\infty$. Therefore, taking $z\rightarrow\infty$,
\beqm
    && |\widetilde{T}(z)|=\left|\langle Tk_z, k_z \rangle_{F^2_\varphi}\right|\leq \left|\left\langle P M_{\chi_{B(0,R)}}Tk_z, k_z\right \rangle_{F^2_\varphi}\right|+ \left|\left\langle \left(T-P M_{\chi_{B(0,R)}}T\right)k_z, k_z\right \rangle_{F^2_\varphi}\right|\rightarrow0.
    \eqm
\end{proof}

Summarizing the discussion above we put forward the following problem.

\begin{prob}
\label{prob:1}
For $0<p\le 1$, what are the necessary and sufficient conditions to characterize the membership  in ${\mathcal K}(F^p_\varphi)$?
\end{prob}

Under the restriction $0<p\le 1$, we dominate the essential norm of $T\in\textrm{WL}_p^\varphi$  by its behavior on  $k_z$, see Theorem \ref{t:3.8}.   Our second problem is whether the estimate \eqref{e:3.9}  still holds for $1<p<\infty$?.

\begin{prob}
\label{prob:2}
Suppose  $1<p<\infty$. Does
$$
\|T_f\|_{e, F^p_\varphi}\simeq \limsup_{z \to \infty} \|T_fk_z\|_{p, \varphi}
$$
hold for  bounded $f$ on $\mathbb{C}^n$?
\end{prob}

 In the previous  section,  with  $f\in \textrm{BMO}$ we have  obtained   the  compactness of Toeplitz operators $T_f$ on $F^p_{\varphi}$.
  However, to consider the compactness of finite product $T_{f_1} T_{f_2}\cdots T_{f_m}$ of Toeplitz operators with $\textrm{BMO}$ symbols we have assumed each symbol $f_j$ has a bounded Berezin transform. Is this hypothesis necessary in the statement (B) of Theorem \ref{t:4.2}?

\begin{prob}
\label{prob:4}
Suppose $0<p<\infty$, and $T$ is in the set of  all linear combination of  the form $T_{f_1} T_{f_2}\cdots T_{f_m}$, where  each  function $f_j\in \textrm{BMO}$. Can we conclude that  $T$ is compact on $F^p_\varphi$  if and only if
$$
\lim\limits_{z\to \infty} \sup\limits_{w\in B(z, r)} \left|\langle T k_z, k_w \rangle_{F^2_\varphi}\right|= 0
$$
holds for each $ r>0$?
\end{prob}

We also point to the general question of how the story is similar, or different, in the case of the Bergman space $A^p(\mathbb{D})$ when $0<p<1$.\newline

{\bf Acknowledgments. }  The authors would like to thank the referees for making some good suggestions.



\begin{bibdiv}
\begin{biblist}
\normalsize

\bib{AZ98}{article}{
   author={Axler, Sheldon},
   author={Zheng, Dechao},
   title={Compact operators via the Berezin transform},
   journal={Indiana Univ. Math. J.},
   volume={47},
   date={1998},
   number={2},
   pages={387--400}
}

\bib{BI12}{article}{
   author={Bauer, Wolfram},
   author={Isralowitz, Joshua},
   title={Compactness characterization of operators in the Toeplitz algebra
   of the Fock space $F^p_\alpha$},
   journal={J. Funct. Anal.},
   volume={263},
   date={2012},
   number={5},
   pages={1323--1355}
}

\bib{CIJ14}{article}{
   author={Cho, Hong Rae}
   author={Isralowitz, Josh},
   author={Joo, Jae-Cheon},
   title={Toeplitz operators on Fock-Sobolev type spaces},
   journal={Integral Equations Operator Theory},
   volume={82},
   date={2015},
   number={1},
   pages={1--32}
}

\bib{CZ12}{article}{
   author={Cho, Hong Rae},
   author={Zhu, Kehe},
   title={Fock-Sobolev spaces and their Carleson measures},
   journal={J. Funct. Anal.},
   volume={263},
   date={2012},
   number={8},
   pages={2483--2506}
}

\bib{CIL11}{article}{
   author={Coburn, L. A.},
   author={Isralowitz, Josh},
   author={Li, Bo},
   title={Toeplitz operators with $\textrm{BMO}$ symbols on the Segal-Bargmann space},
   journal={Trans. Amer. Math. Soc.},
   volume={363},
   date={2011},
   number={6},
   pages={3015--3030}
}

\bib{En99}{article}{
   author={Engli{\v{s}}, Miroslav},
   title={Compact Toeplitz operators via the Berezin transform on bounded
   symmetric domains},
   journal={Integral Equations Operator Theory},
   volume={33},
   date={1999},
   number={4},
   pages={426--455}
}

\bib{HL11}{article}{
   author={Hu, Zhangjian},
   author={Lv, Xiaofen},
   title={Toeplitz operators from one Fock space to another},
   journal={Integral Equations Operator Theory},
   volume={70},
   date={2011},
   number={4},
   pages={541--559}
}

\bib{HL14}{article}{
   author={Hu, Zhangjian},
   author={Lv, Xiaofen},
   title={Toeplitz operators on Fock spaces $F^p(\varphi)$},
   journal={Integral Equations Operator Theory},
   volume={80},
   date={2014},
   number={1},
   pages={33--59}
}

\bib{Is11}{article}{
   author={Isralowitz, Josh},
   title={Compact Toeplitz operators on the Segal-Bargmann space},
   journal={J. Math. Anal. Appl.},
   volume={374},
   date={2011},
   number={2},
   pages={554--557}
}

\bib{Is13}{article}{
   author={Isralowitz, Josh},
   title={Compactness and essential norm properties of operators on generalized Fock spaces},
   journal={J. Operator Theory},
   volume={73},
   date={2015},
   number={2},
   pages={281--314}
  }

\bib{IMW13}{article}{
   author={Isralowitz, Josh},
   author={Mitkovski, Mishko},
   author={Wick, Brett D.},
   title={Localization and compactness in Bergman and Fock spaces},
   journal={Indiana Univ. Math J.},
   volume={64},
   date={2015},
   number={5},
   pages={1553--1573}
}

\bib{MMO03}{article}{
   author={Marco, N.},
   author={Massaneda, X.},
   author={Ortega-Cerd{\`a}, J.},
   title={Interpolating and sampling sequences for entire functions},
   journal={Geom. Funct. Anal.},
   volume={13},
   date={2003},
   number={4},
   pages={862--914}
}

\bib{MZ04}{article}{
   author={Miao, Jie},
   author={Zheng, Dechao},
   title={Compact operators on Bergman spaces},
   journal={Integral Equations Operator Theory},
   volume={48},
   date={2004},
   number={1},
   pages={61--79}
}

\bib{MSW13}{article}{
   author={Mitkovski, Mishko},
   author={Su{\'a}rez, Daniel},
   author={Wick, Brett D.},
   title={The essential norm of operators on $A^p_\alpha(\mathbb{B}_n)$},
   journal={Integral Equations Operator Theory},
   volume={75},
   date={2013},
   number={2},
   pages={197--233}
}

\bib{MW12}{article}{
   author={Mitkovski, Mishko},
   author={Wick, Brett D.},
   title={A reproducing kernel thesis for operators on Bergman-type function
   spaces},
   journal={J. Funct. Anal.},
   volume={267},
   date={2014},
   number={7},
   pages={2028--2055}
}

\bib{SV12}{article}{
   author={Schuster, Alexander P.},
   author={Varolin, Dror},
   title={Toeplitz operators and Carleson measures on generalized
   Bargmann-Fock spaces},
   journal={Integral Equations Operator Theory},
   volume={72},
   date={2012},
   number={3},
   pages={363--392}
}

\bib{Su07}{article}{
   author={Su{\'a}rez, Daniel},
   title={The essential norm of operators in the Toeplitz algebra on $A^p(\mathbb{B}_n)$},
   journal={Indiana Univ. Math. J.},
   volume={56},
   date={2007},
   number={5},
   pages={2185--2232}
}


 \bib{WX16}{article}{
   author={Wang, Chunjie},
     author={Xiao, Jie},
      title={Addendum to "Gaussian integral means of entire functions"},
      journal={Complex Anal. Oper. Theory},
       volume={10},
         date={2016},
         number={3},
          pages={495--503}
          }

\bib{WCZ12}{article}{
   author={Wang, Xiaofeng},
   author={Cao, Guangfu},
   author={Zhu, Kehe},
   title={Boundedness and compactness of operators on the Fock space},
   journal={Integral Equations Operator Theory},
   volume={77},
   date={2013},
   number={3},
   pages={355--370}
}

\bib{XZ13}{article}{
   author={Xia, Jingbo},
   author={Zheng, Dechao},
   title={Localization and Berezin transform on the Fock space},
   journal={J. Funct. Anal.},
   volume={264},
   date={2013},
   number={1},
   pages={97--117}
}

\bib{Zh12}{book}{
   author={Zhu, Kehe},
   title={Analysis on Fock spaces},
   series={Graduate Texts in Mathematics},
   volume={263},
   publisher={Springer, New York},
   date={2012},
   pages={x+344}
}

\bib{Zo03}{article}{
   author={Zorboska, Nina},
   title={Toeplitz operators with $\textrm{BMO}$ symbols and the Berezin transform},
   journal={Int. J. Math. Math. Sci.},
   date={2003},
   number={46},
   pages={2929--2945}
}

\end{biblist}
\end{bibdiv}

\end{document}